\documentclass[a4paper,11pt,final]{article}

\usepackage{fullpage}
\usepackage{mathtools}
\usepackage{amssymb}
\usepackage{amsthm}
\usepackage[numbers]{natbib}
\usepackage{doi}
\usepackage{mathrsfs} 
\usepackage{enumitem} 
\usepackage{tikz-cd}
\usepackage{ifthen} 
\usepackage{arydshln} 

\newcounter{letterctr}

\theoremstyle{plain}
\newtheorem{lettered_thm}[letterctr]{Theorem}
\newtheorem{thm}{Theorem}[section]
\newtheorem*{thm*}{Theorem}
\newtheorem{prop}[thm]{Proposition}
\newtheorem{cor}[thm]{Corollary}
\newtheorem{lemma}[thm]{Lemma}

\theoremstyle{definition}
\newtheorem{defn}[thm]{Definition}
\newtheorem{ex}[thm]{Example}

\theoremstyle{remark}
\newtheorem{rmk}[thm]{Remark}
\newtheorem*{rmk*}{Remark}
\newtheorem*{notn}{Notation}

\newcommand{\cat}[1]{\ensuremath{\mathscr{#1}}}
\newcommand{\oper}[1]{\ensuremath{\mathscr{#1}}}
\newcommand{\lmod}[1]{\ensuremath{#1\mbox{--Mod}}}
\newcommand{\FI}{\ensuremath{\mathsf{FI}}}
\newcommand{\SI}{\ensuremath{\mathsf{SI}}}
\newcommand{\Ab}{\ensuremath{\mathsf{Ab}}}
\newcommand{\Grp}{\ensuremath{\mathsf{Grp}}}
\newcommand{\SpZ}[1]{\ensuremath{\mathrm{Sp}(2#1,\mathbb{Z})}}
\newcommand{\IA}{\ensuremath{I\!A}}
\newcommand{\free}{\ensuremath{\mathbb{F}}}
\newcommand{\surf}{\ensuremath{S}}
\newcommand{\sym}{\ensuremath{\Sigma}}
\newcommand{\tor}{\ensuremath{\mathscr{I}}}
\newcommand{\set}[2]{\ensuremath{\left\lbrace#1\mid#2\right\rbrace}}
\newcommand{\Sp}{\ensuremath{\mathrm{Sp}}}
\newcommand{\GL}{\ensuremath{\mathrm{GL}}}
\newcommand{\graph}{\ensuremath{\mathcal{G}}}

\newcommand{\dmo}{\DeclareMathOperator}
\newcommand{\bld}{\mathbf}

\dmo{\End}{End}
\dmo{\Aut}{Aut}
\dmo{\Out}{Out}
\dmo{\Id}{Id}
\dmo{\coker}{coker}
\dmo{\Ker}{Ker}
\dmo{\Coker}{Coker}
\dmo{\id}{id}
\dmo{\im}{im}
\dmo{\graded}{gr}
\dmo{\Hom}{Hom}
\dmo{\Diff}{Diff}
\dmo{\colim}{colim}
\dmo{\Iso}{Iso}


\title{Homological stability for quotients of mapping class groups of surfaces by the Johnson subgroups}
\author{Tom\'{a}\v{s} Zeman}

\begin{document}
\maketitle
\begin{abstract}
We study quotients of mapping class groups \(\Gamma_{g,1}\) of oriented surfaces with one boundary component by terms of their Johnson filtrations, and we show that the homology of these quotients with suitable systems of twisted coefficients stabilises as the genus of the surface goes to infinity. We also compute the stable (co)homology with constant rational coefficients for one family of such quotients.
\end{abstract}

\section{Introduction and statement of results}
Let \(\surf_{g,n}\) be an oriented surface of genus \(g\) with \(n\) boundary components, and let \(\Gamma_{g,n}\) be its mapping class group, i.e. the group of isotopy classes of orientation-preserving homeomorphisms of \(\surf_{g,k}\) that fix a neighbourhood of the boundary pointwise.

Gluing on a pair of pants \(\surf_{0,3}\) along one or two boundary components yields embeddings \(\surf_{g,n}\hookrightarrow\surf_{g,n+1}\) and \(\surf_{g,n}\hookrightarrow\surf_{g+1,n-1}\) respectively, and these in turn give rise to homomorphisms \(\Gamma_{g,n}\to\Gamma_{g,n+1}\) and \(\Gamma_{g,n}\to\Gamma_{g+1,n-1}\) which extend a given mapping class by the identity on \(\surf_{0,3}\). Harer and Ivanov showed that these homomorphisms induce isomorphisms on group homology in degrees which are small compared to the genus. In particular, the sequence \(\surf_{0,1}\hookrightarrow\surf_{1,1}\hookrightarrow\cdots\) of embeddings constructed by attaching \(\surf_{1,2}\) to the unique boundary component of \(\surf_{\ast,1}\) yields a sequence of homomorphisms \(\Gamma_{0,1}\to\Gamma_{1,1}\to\ldots\) which satisfies homological stability in the sense that
\[H_i(\Gamma_{g-1,1};\mathbb{Z})\to H_i(\Gamma_{g,1};\mathbb{Z})\]
is an isomorphism whenever \(i\leqslant\frac{2}{3}g-\frac{4}{3}\) (see \emph{e.g.} Wahl's survey \cite{wah14}).

If we fix a base-point \(\ast\in\partial\surf_{g,1}\), the fundamental group \(\pi\coloneqq\pi_1(\surf_{g,1},\ast)\) enjoys an action of \(\Gamma_{g,1}\). This action preserves the lower central series\footnote[2]{We use the indexing convention \(\gamma_1(\pi)=\pi\).} \(\{\gamma_k(\pi)\}_{k\geqslant1}\) of \(\pi\), and the \emph{Johnson filtration} \(\{\tor_{g,1}(k)\}_{k\geqslant1}\) of \(\Gamma_{g,1}\) consists of kernels of the induced actions of \(\Gamma_{g,1}\) on the quotients \(\pi/\gamma_{k+1}(\pi)\). It is easy to show that the above homomorphisms \(\Gamma_{g-1,1}\to\Gamma_{g,1}\) preserve the Johnson filtration. Hence a natural question arises whether the induced homomorphisms
\(\Gamma_{g-1,1}/\tor_{g-1,1}(k)\to\Gamma_{g,1}/\tor_{g,1}(k)\) are also homology isomorphisms when \(g\) is sufficiently large.

\begin{lettered_thm}\label{thm:hom stab}
Let \(k\geqslant1\), and let \(M_0\to M_1\to\cdots\) be a good module of the sequence of groups \(\Gamma_{0,1}/\tor_{0,1}(k)\to\Gamma_{1,1}\tor_{1,1}(k)\to\cdots\). Then for every \(i\), the homomorphisms in the sequence
\[\cdots\to H_i(\Gamma_{g-1,1}/\tor_{g-1,1}(k);M_{g-1})\to H_i(\Gamma_{g,1}/\tor_{g,1}(k);M_g)\to\cdots\]
are eventually (for large \(g\)) isomorphisms.
\end{lettered_thm}

See Definition~\ref{def:good_module} for the precise meaning of a good module of a sequence of groups; in essence, these are modules that come from well-behaved sequences of representations of the symplectic groups \(\SpZ{g}\). In particular, for any abelian group \(A\), the constant coefficients in \(A\) constitute a good module.

Our proof is by induction, bootstrapping known results about homological stability of the symplectic groups \(\SpZ{g}\cong\Gamma_{g,1}/\tor_{g,1}(1)\) to the higher quotients \(\Gamma_{g,1}/\tor_{g,1}(k)\), \(k>1\), by means of a spectral sequence argument applied to the extensions
\[0\to\tor_{g,1}(k-1)/\tor_{g,1}(k)\to\Gamma_{g,1}/\tor_{g,1}(k)\to\Gamma_{g,1}/\tor_{g,1}(k-1)\to1.\]
Our approach has a drawback in that we get no hold on the stable range: the heart of our argument is Theorem~\ref{Qk_finite_degree} which says that certain \(\FI\)-modules which encapsulate the successive quotients \(\tor_{g,1}(k-1)/\tor_{g,1}(k)\) are finitely generated. Its proof relies on the noetherian property of the category of \(\FI\)-modules over \(\mathbb{Z}\) which was proved by Church, Ellenberg, Farb and Nagpal in \cite{cefn14}. This property is non-constructive in the sense that it guarantees the existence of a finite generating set of a sub-\(\FI\)-module of a finitely generated \(\FI\)-module, but it offers no information on the size or degrees of some finite set of generators.

The inspiration to use the above bootstrapping method came from a paper of Szymik \cite{szy14} where this method is applied to the closely related automorphism groups of free nilpotent groups. Szymik considers the extensions
\[0\to K_n(k)\to\Aut N_n(k)\to\Aut N_n(k-1)\to1\]
where \(N_n(k)\) is the free nilpotent group on \(n\) generators, of nilpotency class \(k\). If we write \(\free_n\) for the free group on \(n\) generators, then \(N_n(k)\cong\free_n/\gamma_{k+1}(\free_n)\). The kernels \(K_n(k)\) in the above extensions are relatively easy to understand, allowing for extraction of linear stability bounds from Szymik's argument. Indeed, one can show that
\[K_n(k)\cong\Hom(\mathbb{Z}^n,\gamma_k(\free_n)/\gamma_{k+1}(\free_n))\]
which is a free abelian group. The quotients \(\tor_{g,1}(k-1)/\tor_{g,1}(k)\), on the other hand, seem more difficult to analyse, and while they become subgroups of \(K_{2g}(k)\) upon fixing an isomorphism \(\pi_1(\surf_{g,1},\ast)\cong\free_{2g}\), it is not clear to us \emph{which} subgroups. It is for this reason that we use the techniques of representation stability and consequently lose information about the stable range.

The pair-of-pants product endows the space \(\coprod_{g\geqslant0}B\Gamma_{g,1}\) with a monoid structure, and the group completion of this monoid is (weakly equivalent to) \(\mathbb{Z}\times B\Gamma_{\infty}^+\) by the group-completion theorem, where \(\Gamma_{\infty}=\colim_g\Gamma_{g,1}\) and \(X^+\) is the Quillen plus construction on \(X\) with respect to the maximal perfect subgroup of \(\pi_1(X)\). Tillmann \cite{til97,til00} showed that \(\mathbb{Z}\times B\Gamma_{\infty}^+\) is in fact an infinite loop space, and that the obvious map \(\mathbb{Z}\times B\Gamma_{\infty}^+\to K_{\Sp}(\mathbb{Z})\) to the Hermitian \(K\)-theory of \(\mathbb{Z}\) is a map of infinite loop spaces. We show that analogous constructions for the quotients \(\Gamma_{g,1}/\tor_{g,1}(k)\) also yield infinite loop spaces and that we get an infinite tower of infinite loop spaces interpolating between \(\mathbb{Z}\times B\Gamma_{\infty}^+\) and \(K_{\Sp}(\mathbb{Z})\).

\begin{lettered_thm}\label{thm:inf loops}
The obvious quotient homomorphisms of groups give rise to a commutative diagram
\[
\begin{tikzcd}
\mathbb{Z}\times B\Gamma_{\infty}^+ \ar[drr] \ar[drrr] \ar[drrrr] &[-7em] & & & &[-2em]\\
& \cdots \ar[r] & \mathbb{Z}\times B\Gamma_{\infty}(3)^+ \ar[r] & \mathbb{Z}\times B\Gamma_{\infty}(2)^+ \ar[r] & \mathbb{Z}\times B\Gamma_{\infty}(1)^+ \ar[r, phantom, "\simeq" description] & K_{\Sp}(\mathbb{Z})
\end{tikzcd}
\]
of maps of infinite loop spaces.
\end{lettered_thm}

\paragraph{Outline of paper}
In Section~\ref{section:mods of fin deg}, we discuss \(\cat{C}\)-modules, \emph{i.e.} functors from a small category \(\cat{C}\) to the category of abelian groups, and we introduce the two choices of \(\cat{C}\) that will be of interest to us, namely the category \(\FI\) of finite sets and injections, and the category \(\SI\) of finitely generated free symplectic spaces over \(\mathbb{Z}\) and symplectic maps. In Sections~\ref{section:automs of free nilps} and \ref{section:quotients of mcgs}, we recall some facts about automorphism groups of free nilpotent groups, we construct the \(\SI\)-modules \(Q(k)\) which encapsulate the successive quotients \(\tor_{g,1}(k-1)/\tor_{g,1}(k)\) as \(\SpZ{g}\)-modules, and we prove the key Theorem~\ref{Qk_finite_degree}. Sections~\ref{section:proof of theorem} and \ref{section:infinite loop space} are devoted to the proofs of Theorem~\ref{thm:hom stab} and \ref{thm:inf loops}, respectively.

\subsection{Acknowledgements}
I would like to thank my supervisor Ulrike Tillmann for her constant encouragement and many useful discussions, as well as for her suggestions which greatly improved the exposition of this paper.

\subsection{Notation}
We use the following conventions: \(\mathbb{N}\coloneqq\lbrace0,1,2,\ldots\rbrace\) is the set of natural numbers including zero; \(\Grp\) and \(\Ab\) are the categories of groups and abelian groups, respectively; \(\sym_n\) is the symmetric group on \(n\) letters; \(\Sp(2n,R)\) is the group of \(2n\times2n\) symplectic matrices with entries in a ring \(R\) --- for us, \(R=\mathbb{Z}\) most of the time. In a group \(G\), we write \([x,y]=xyx^{-1}y^{-1}\) for the commutator of \(x,y\in G\).

\section{Modules of finite degree}\label{section:mods of fin deg}
\subsection{Modules over small categories}
Let \(\cat{C}\) be a small category with objects (canonically identified with) the natural numbers \(\mathbb{N}\).

\begin{defn}
A \(\cat{C}\)-\emph{object in a category \(\cat{A}\)} is a functor \(\cat{C}\to\cat{A}\). A morphism of \(\cat{C}\)-objects in \(\cat{A}\) is a natural transformation of functors.
\end{defn}

\begin{notn}
We write \(F_n\) for the value of a \(\cat{C}\)-object \(F\) at \(n\in\cat{C}\). When \(f:n\to m\) is an arrow in \(\cat{C}\), we write \(f_{\ast}:F_n\to F_m\) for the induced morphism in \(\cat{A}\). When \(\phi:F\to F'\) is a morphism of \(\cat{C}\)-objects, we write \(\phi_n:F_n\to F'_n\) for its component at \(n\in\cat{C}\).
\end{notn}

\begin{defn}
A \(\cat{C}\)-\emph{module} is a \(\cat{C}\)-object in \(\Ab\). We write \(\lmod{\cat{C}}\) for the category of \(\cat{C}\)-modules and natural transformations.
\end{defn}

\begin{rmk}
\(\lmod{\cat{C}}\) is abelian, with kernels and cokernels computed pointwise. Hence it makes good sense to talk of sub-\(\cat{C}\)-modules, quotient \(\cat{C}\)-modules \emph{etc.}
\end{rmk}

Now suppose further that the category \(\cat{C}\) is strict monoidal, where the monoidal product \(\oplus\) is given by \(n\oplus m=n+m\) on objects, and that the monoidal unit \(0\) is initial in \(\cat{C}\).

\begin{defn}
We will call such \(\cat{C}\) a \emph{category with objects the naturals}, or CON for short.
\end{defn}

Consider the functor \(R\coloneqq1\oplus(-):\cat{C}\to\cat{C}\). Since \(0\) is initial, the unique arrow \(0\to 1\) induces a natural transformation \(\rho:\Id\Rightarrow R\) from the identity \(\Id=0\oplus(-)\) to \(R\).

\begin{defn}\label{defn:ker and coker}
Let \(F\) be a \(\cat{C}\)-module. The \emph{suspension} of \(F\) is \(\Sigma F\coloneqq F\circ R\). We define two new \(\cat{C}\)-modules, called the \emph{kernel} and \emph{cokernel} of \(F\) and denoted \(\Ker F\) and \(\Coker F\), to be the kernel and cokernel of the morphism \(F\rho:F\to\Sigma F\) of \(\cat{C}\)-modules.
\end{defn}

\begin{notn}
To avoid any confusion, \(\ker\) and \(\coker\) with small initial letters will always refer to the usual kernel and cokernel in \(\lmod{\cat{C}}\) (or elsewhere) as an abelian category, whereas \(\Ker\) and \(\Coker\) with capitals will always denote the above constructions.
\end{notn}

\begin{ex}
When \(\cat{C}\) is the poset \(\mathbb{N}\) with the evident monoidal structure, a \(\cat{C}\)-module \(F\) is just a composable sequence \(F_0\to F_1\to\cdots\) of homomorphisms of abelian groups. The morphism \(F\rho:F\to\Sigma F\) is
\begin{equation*}
\begin{tikzcd}
F_0 \ar[r] \ar[d] & F_1 \ar[r] \ar[d] & \cdots\\
F_1 \ar[r] & F_2 \ar[r] & \cdots
\end{tikzcd}
\end{equation*}
\(\Ker F\) consists of the kernels of the homomorphisms in \(F\) and zero maps, and similarly for the cokernel.
\end{ex}

Observe that \(\Ker\) and \(\Coker\) are in fact functors \(\lmod{\cat{C}}\to\lmod{\cat{C}}\). We have the following easy lemma.

\begin{lemma}\label{l:ex seq of ker and coker}
If \(0\to F'\xrightarrow{\iota}F\xrightarrow{\pi}F''\to0\) is a short exact sequence of \(\cat{C}\)-modules, there is an exact sequence
\[0\to\Ker F'\xrightarrow{\Ker\iota}\Ker F\xrightarrow{\Ker\pi}\Ker F''\to\Coker F'\xrightarrow{\Coker\iota}\Coker F\xrightarrow{\Coker\pi}\Coker F''\to0.\]
\end{lemma}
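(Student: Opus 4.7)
The statement is an immediate application of the snake lemma in the abelian category $\lmod{\cat{C}}$, and that is exactly the route I would take.

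First, I would assemble the commutative diagram
\[
\begin{tikzcd}
0 \ar[r] & F' \ar[r,"\iota"] \ar[d,"F'\rho"'] & F \ar[r,"\pi"] \ar[d,"F\rho"'] & F'' \ar[r] \ar[d,"F''\rho"'] & 0\\
0 \ar[r] & \Sigma F' \ar[r,"\Sigma\iota"] & \Sigma F \ar[r,"\Sigma\pi"] & \Sigma F'' \ar[r] & 0
\end{tikzcd}
\]
in $\lmod{\cat{C}}$. The top row is the given short exact sequence. The bottom row is obtained by applying the suspension functor $\Sigma = (-)\circ R$ to it; since kernels and cokernels in $\lmod{\cat{C}}$ are computed pointwise and $\Sigma$ merely re-evaluates a $\cat{C}$-module at shifted objects, $\Sigma$ is exact, so the bottom row is again short exact. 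The two squares commute by naturality of the transformation $\rho:\Id\Rightarrow R$ applied pointwise, i.e.\ since $\iota$ and $\pi$ are morphisms of $\cat{C}$-modules they commute with the structure maps induced by $\rho$.

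With this diagram in place, I would invoke the snake lemma, which holds in any abelian category and hence in $\lmod{\cat{C}}$. It produces the six-term exact sequence
\[0\to\ker(F'\rho)\to\ker(F\rho)\to\ker(F''\rho)\xrightarrow{\partial}\coker(F'\rho)\to\coker(F\rho)\to\coker(F''\rho)\to0,\]
and by Definition~\ref{defn:ker and coker} the kernels and cokernels of the vertical maps $F\rho$, $F'\rho$, $F''\rho$ are precisely the capitalised $\Ker$ and $\Coker$ constructions. The induced morphisms on these objects coming from the snake lemma agree with $\Ker\iota$, $\Ker\pi$, $\Coker\iota$, $\Coker\pi$ because $\Ker$ and $\Coker$ are functorial and defined via the same $\rho$.

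There is no real obstacle; the only thing worth double-checking is that the bottom row is exact, but this is just the pointwise exactness of $0\to F'_{n+1}\to F_{n+1}\to F''_{n+1}\to 0$ for every $n$, which follows immediately from exactness of the original sequence.
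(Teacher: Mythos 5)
Your proof is correct and is exactly what the paper has in mind: the paper's entire proof reads ``Immediate from the snake lemma,'' and you have simply spelled out the commutative diagram, the exactness of the bottom row via exactness of $\Sigma$, and the naturality argument that the paper leaves implicit. Nothing is missing.
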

\begin{proof}
Immediate from the snake lemma.
\end{proof}

Let \(\cat{D}\) be another CON, and let \(X:\cat{D}\to\cat{C}\) be a strict monoidal functor which is the identity on objects. \(X\) induces a pullback functor \(X^{\ast}:\lmod{\cat{C}}\to\lmod{\cat{D}}\) taking \(F\) to \(F\circ X\). Moreover, we have the following lemma.
\begin{lemma}\label{l:pullbacks commute with everything}
\(X^{\ast}\) commutes with the functors \(\Sigma\), \(\Ker\) and \(\Coker\). In other words, for a \(\cat{C}\)-module \(F\), \(X^{\ast}(\Sigma F)=\Sigma(X^{\ast}F)\), \emph{etc}.
\end{lemma}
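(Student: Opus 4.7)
The plan is to unwind the definitions and reduce everything to two observations about $X$: that it intertwines the endofunctors $R$ on $\cat{C}$ and $\cat{D}$, and that it sends the natural transformation $\rho_{\cat{D}}$ to $\rho_{\cat{C}}$. Once these are in hand, the fact that $X^{\ast}$ commutes with pointwise-computed kernels and cokernels of morphisms in $\lmod{\cat{C}}$ does the rest.

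First I would verify that $X \circ R_{\cat{D}} = R_{\cat{C}} \circ X$ as functors $\cat{D} \to \cat{C}$. Since $X$ is strict monoidal and the identity on objects, $X(1 \oplus_{\cat{D}} n) = 1 \oplus_{\cat{C}} X(n)$ and the same compatibility holds on morphisms; this immediately yields $\Sigma(X^{\ast}F) = F \circ R_{\cat{C}} \circ X = F \circ X \circ R_{\cat{D}} = X^{\ast}(\Sigma F)$. Next, because $0$ is initial in both CONs and $X$ preserves $0$, the unique arrow $0 \to 1$ in $\cat{D}$ is sent to the unique arrow $0 \to 1$ in $\cat{C}$; monoidality then gives $X(\rho_{\cat{D},n}) = \rho_{\cat{C},n}$ for every $n$. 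Consequently, under the identification $X^{\ast}(\Sigma F) = \Sigma(X^{\ast}F)$, the morphism $X^{\ast}(F\rho_{\cat{C}})$ coincides component-wise with $(X^{\ast}F)\rho_{\cat{D}}$.

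Finally, since $\lmod{\cat{C}}$ and $\lmod{\cat{D}}$ are abelian with kernels and cokernels computed objectwise, and $X^{\ast}$ simply reindexes objects, $X^{\ast}$ is exact and in particular preserves kernels and cokernels of morphisms in $\lmod{\cat{C}}$. Applying this to $F\rho_{\cat{C}}$ yields
\[
X^{\ast}(\Ker F) = X^{\ast}\bigl(\ker(F\rho_{\cat{C}})\bigr) = \ker\bigl(X^{\ast}(F\rho_{\cat{C}})\bigr) = \ker\bigl((X^{\ast}F)\rho_{\cat{D}}\bigr) = \Ker(X^{\ast}F),
\]
and the same argument with $\coker$ in place of $\ker$ handles $\Coker$.

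None of the steps looks like a genuine obstacle; the only point requiring some care is bookkeeping the identification $X^{\ast}(\Sigma F) = \Sigma(X^{\ast}F)$ precisely enough that the two natural transformations $X^{\ast}(F\rho_{\cat{C}})$ and $(X^{\ast}F)\rho_{\cat{D}}$ can be compared as morphisms with the \emph{same} target, which is why I would state and prove the compatibility $X \circ R_{\cat{D}} = R_{\cat{C}} \circ X$ and $X(\rho_{\cat{D}}) = \rho_{\cat{C}}$ explicitly before invoking exactness of $X^{\ast}$.
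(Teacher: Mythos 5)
Your proof is correct and takes essentially the same route as the paper: identify the two compatibilities $R_{\cat{C}}X = XR_{\cat{D}}$ and $\rho_{\cat{C}}X = X\rho_{\cat{D}}$ coming from $X$ being strict monoidal and the identity on objects, and then let the pointwise nature of kernels and cokernels finish the job. The paper simply states these two equalities and calls the rest immediate, so your version is just a more fully spelled-out account of the same argument.
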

\begin{proof}
This is immediate once we note that since \(X\) is strict monoidal and the identity on objects, we have \(R_{\cat{C}}X=X R_{\cat{D}}\) and \(\rho_{\cat{C}}X=X\rho_{\cat{D}}\). Here \(R_{\cat{C}}\) and \(\rho_{\cat{C}}\) denote the above functor and natural transformation in \(\cat{C}\), and similarly for \(\cat{D}\).
\end{proof}

\begin{defn}
We will call a strict monoidal functor \(X:\cat{D}\to\cat{C}\) which is the identity on objects a \emph{functor of categories with objects the naturals}, or a \emph{functor of CONs} for short.
\end{defn}

\subsection{The degree of a \(\cat{C}\)-module}
Let \(\cat{C}\) be a CON, and let \(F\) be a \(\cat{C}\)-module.
\begin{defn}
\(F\) has degree \(-1\) if \(F_n=0\) for all sufficiently large \(n\). For \(d\geqslant0\), \(F\) has degree \(d\) if \(\Ker F\) has degree \(-1\) and \(\Coker F\) has degree \(d-1\).
\(F\) has finite degree if \(F\) has degree \(d\) for some \(d\geqslant -1\).
\end{defn}

The key observation of this section is the following lemma.
\begin{lemma}\label{l:degree of pullback}
Suppose \(X:\cat{D}\to\cat{C}\) is a functor of CONs. Then a \(\cat{C}\)-module \(F\) has degree \(d\) if and only if the \(\cat{D}\)-module \(X^{\ast}F\) has degree \(d\).
\end{lemma}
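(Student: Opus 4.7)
The plan is a straightforward induction on the degree \(d\), leveraging Lemma~\ref{l:pullbacks commute with everything} to interchange \(X^{\ast}\) with \(\Ker\) and \(\Coker\), together with the trivial observation that pullback along \(X\) does not change the underlying abelian groups, since \(X\) is the identity on objects.

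For the base case \(d=-1\), I would note that because \(X\) is the identity on objects, \((X^{\ast}F)_n=F_{X(n)}=F_n\) as an abelian group for every \(n\in\mathbb{N}\). Hence \(F_n=0\) for all sufficiently large \(n\) if and only if \((X^{\ast}F)_n=0\) for all sufficiently large \(n\), so \(F\) has degree \(-1\) iff \(X^{\ast}F\) does.

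For the inductive step, assuming the statement has been established for degree \(d-1\), I would apply Lemma~\ref{l:pullbacks commute with everything} to obtain the identifications \(\Ker(X^{\ast}F)=X^{\ast}(\Ker F)\) and \(\Coker(X^{\ast}F)=X^{\ast}(\Coker F)\) in \(\lmod{\cat{D}}\). By the base case applied to \(\Ker F\), the \(\cat{C}\)-module \(\Ker F\) has degree \(-1\) iff \(X^{\ast}(\Ker F)=\Ker(X^{\ast}F)\) has degree \(-1\); by the inductive hypothesis applied to \(\Coker F\), the \(\cat{C}\)-module \(\Coker F\) has degree \(d-1\) iff \(X^{\ast}(\Coker F)=\Coker(X^{\ast}F)\) has degree \(d-1\). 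Combining these two equivalences with the definition of degree, one concludes that \(F\) has degree \(d\) iff \(X^{\ast}F\) has degree \(d\), completing the induction.

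There is essentially no serious obstacle here: once Lemma~\ref{l:pullbacks commute with everything} and the triviality \((X^{\ast}F)_n=F_n\) are in hand, everything reduces to unwinding the recursive definition of degree. The only thing to watch is that the induction really starts at \(d=-1\), since the clause "\(\Coker F\) has degree \(d-1\)" for \(d=0\) already appeals to the base case.
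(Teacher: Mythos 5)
Your proposal is correct and follows essentially the same argument as the paper: induction on \(d\), with the base case handled by the observation that \(X^{\ast}\) does not change the underlying abelian groups, and the inductive step using Lemma~\ref{l:pullbacks commute with everything} to interchange \(X^{\ast}\) with \(\Ker\) and \(\Coker\). There is nothing to add.
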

\begin{proof}
This follows from an easy induction on \(d\) using Lemma~\ref{l:pullbacks commute with everything}. Since \(X\) is the identity on objects, \((X^{\ast}F)_n=F_n\), so \(F\) has degree \(-1\) if and only if \(X^{\ast}F\) does. Assuming the result for \(d-1\), the following are equivalent:
\begin{itemize}
\item \(F\) has degree \(d\);
\item \(\Ker F\) has degree \(-1\) and \(\Coker F\) has degree \(d-1\);
\item \(X^{\ast}\Ker F=\Ker X^{\ast}F\) has degree \(-1\) and \(X^{\ast}\Coker F=\Coker X^{\ast}F\) has degree \(d-1\);
\item \(X^{\ast}F\) has degree \(d\).
\end{itemize}
\end{proof}

\subsection{The categories \(\FI\) and \(\SI\)}
In this section we introduce two more examples of CONs beyond \(\mathbb{N}\), namely the categories \(\FI\) of finite sets and injections, and \(\SI\) of free finitely generated abelian groups with symplectic forms and symplectic injections.
\begin{defn}
Let \(\FI\) be the category of sets of the form \(\bld{n}\coloneqq\lbrace1,2,\ldots,n\rbrace\), \(n\geqslant0\), and set injections.
\end{defn}

\(\FI\) is strict monoidal with the monoidal product \(\sqcup\) coming from disjoint union of sets: \(\bld{n}\sqcup\bld{m}=\bld{n+m}=\lbrace1,2,\ldots,n+m\rbrace\) on objects and
\[(f\sqcup g)(i)=
\begin{cases}
f(i) & \mbox{if }i\leqslant n\\
g(i-n)+m & \mbox{if }i>n
\end{cases}
\]
on morphisms \(f:\bld{n}\to\bld{m}\) and \(g:\bld{k}\to\bld{l}\). It is easily seen that this is indeed a CON.

\begin{defn}
Let \(V_n\), \(n\geqslant0\), be the free abelian group with basis \(a_1,b_1,a_2,b_2,\ldots,a_n,b_n\), equipped with the symplectic form \((\cdot,\cdot)\) given by
\begin{gather*}
(a_i,a_j)=0=(b_i,b_j)\\
(a_i,b_j)=\delta_{ij}=-(b_j,a_i)
\end{gather*}
for all \(1\leqslant i,j\leqslant n\), where \(\delta_{ij}\) is the Kronecker delta. Let \(\SI\) be the category with objects the \(V_n\) and arrows the form-preserving homomorphisms.
\end{defn}

The following lemma is standard linear algebra. We quote it here for later reference.
\begin{lemma}\label{l:SI facts}\leavevmode
\begin{enumerate}[label={\normalfont (\roman*)}]
\item \(\End_{\SI}(V_n)=\Aut_{\SI}(V_n)=\SpZ{n}\)
\item For every \(f:V_n\to V_m\) in \(\SI\), there is a unique \(C\leqslant V_m\) such that \(V_m\cong(\operatorname{im}f)\oplus C\) as abelian groups with bilinear forms (where the form on \(C\) is the restriction of the form on \(V_m\)). Moreover, \(C\cong V_{m-n}\).
\item If \(f\) and \(C\) are as above and \(s\in\Aut_{\SI}(V_m)\) satisfies \(f=s\circ f\), then \(s\) preserves the above direct sum decomposition and \(s=\id\oplus s'\) for some \(s'\in\Aut(C)\).
\end{enumerate}
\end{lemma}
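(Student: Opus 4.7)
The plan is to derive all three parts from the standard orthogonal-complement decomposition for symplectic forms, carried over to $\mathbb{Z}$-coefficients. The crucial observation is that the standard form on $V_n$ has Gram matrix $J$ with $\det J = 1$, so the form is unimodular in the sense that the canonical map $V_n \to \Hom(V_n, \mathbb{Z})$, $v \mapsto (v, -)$, is an isomorphism of $\mathbb{Z}$-modules. This lets the vector-space arguments go through integrally.

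For (i), any $\phi \in \End_{\SI}(V_n)$ is left multiplication by a matrix $M$ with $M^T J M = J$; taking determinants gives $\det(M)^2 = 1$, hence $M$ is invertible over $\mathbb{Z}$ and $M^{-1}$ is again symplectic, identifying $\End_{\SI}(V_n)$ with $\Aut_{\SI}(V_n) = \SpZ{n}$. For (ii), $f$ is injective by nondegeneracy of the form on $V_n$, so $W \coloneqq \operatorname{im} f$ is free of rank $2n$, and since $f$ is form-preserving, the restricted form on $W$ also has Gram matrix $J$ and is therefore unimodular. Define $\pi \colon V_m \to W$ by using unimodularity of $W$ to represent each linear functional $(v, -)|_W$ as $(\pi(v), -)|_W$; then $v - \pi(v) \in W^{\perp}$, giving $V_m = W \oplus W^{\perp}$. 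Any orthogonal summand $C$ complementary to $W$ must lie in $W^{\perp}$ and match its rank, so $C = W^{\perp}$, proving uniqueness. Finally, $W^{\perp}$ is itself a free $\mathbb{Z}$-module of rank $2(m-n)$ with unimodular symplectic form, and a routine induction peeling off hyperbolic planes $\langle a, b \rangle$ with $(a,b) = 1$ (using unimodularity to find a dual $b$ to a chosen primitive $a$) produces a symplectic basis, whence $W^{\perp} \cong V_{m-n}$.

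For (iii), $s \circ f = f$ forces $s$ to fix $W$ pointwise; then for $c \in C = W^{\perp}$ and $w \in W$ we compute $(s(c), w) = (s(c), s(w)) = (c, w) = 0$, so $s(c) \in W^{\perp} = C$, and $s$ preserves the decomposition $V_m = W \oplus C$. Hence $s = \id_W \oplus s|_C$ with $s|_C \in \Aut(C)$ symplectic. The only delicate point in the whole argument is ensuring the splitting $V_m = W \oplus W^{\perp}$ exists over $\mathbb{Z}$ rather than merely over $\mathbb{Q}$; this is exactly the unimodularity of the form on $W$, which follows from $f$ being form-preserving combined with $\det J = 1$. Every other step is a direct transcription of the vector-space version.
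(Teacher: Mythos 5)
The paper never proves this lemma --- it is stated as ``standard linear algebra'' and used without proof --- so there is no in-paper argument to compare against. Your proof is correct and is the expected one: everything hinges on the observation that a form-preserving injection $f \colon V_n \to V_m$ has image $W$ on which the restricted form is still unimodular (Gram matrix $J$ with $\det J = 1$), and unimodularity is exactly what lets the familiar $V_m = W \oplus W^{\perp}$ splitting and the symplectic-basis extraction by peeling off hyperbolic planes go through over $\mathbb{Z}$ rather than merely over $\mathbb{Q}$. Part (i) via $\det(M)^2 = 1$, part (iii) via $s|_W = \id$ and $(s(c), w) = (s(c), s(w)) = (c,w) = 0$ are both clean.

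One small point worth tightening in (ii): the phrase ``must lie in $W^{\perp}$ and match its rank, so $C = W^{\perp}$'' is not literally a proof, since a full-rank sublattice of a free $\mathbb{Z}$-module need not be the whole thing. What saves it is that $C$ is a \emph{direct summand} of $V_m$ (not just a full-rank submodule), so $W^{\perp}/C \hookrightarrow V_m/C$ is torsion-free while having rank zero, hence trivial. Equivalently, and more directly: for $u \in W^{\perp}$ write $u = w + c$ with $w \in W$, $c \in C \subseteq W^{\perp}$; then $w = u - c \in W \cap W^{\perp} = 0$, so $u = c \in C$. Either fix closes the gap and the rest of your argument stands.
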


\(\SI\) is strict monoidal with monoidal product \(\boxtimes\) coming from direct sum. Writing \(V_n\oplus V_m\) for the usual direct sum of abelian groups with bilinear forms, let \(\alpha_{n,m}:V_n\oplus V_m\xrightarrow{\sim}V_{n+m}\) be the obvious form-preserving isomorphism which takes the bases of \(V_n\) and \(V_m\) to the first \(2n\) and last \(2m\) basis elements of \(V_{n+m}\), respectively, in an order-preserving fashion. We define \(V_n\boxtimes V_m\coloneqq V_{n+m}\) and we let \(f\boxtimes g\) be the composition
\[V_{n+k}\xrightarrow{\alpha_{n,k}^{-1}}V_n\oplus V_k\xrightarrow{f\oplus g}V_m\oplus V_l\xrightarrow{\alpha_{m,l}}V_{m+l}\]
for morphisms \(f:V_n\to V_m\), \(g:V_k\to V_l\). Again it is easily checked that this makes \(\SI\) into a CON.

\begin{defn}\label{defn:SI-Mod to FI-Mod}
Let \(X:\FI\to\SI\) be the functor which sends \(\bld{n}\) to \(V_n\) and \(f:\bld{n}\to\bld{m}\) to the homomorphism \(Xf:V_n\to V_m\) defined on the standard basis by \(Xf(a_i)=a_{f(i)}\) and \(Xf(b_i)=b_{f(i)}\).
\end{defn}
It is readily seen that \(X\) is a functor of CONs, and so Lemma~\ref{l:degree of pullback} applies.

\begin{rmk*}
The main reasons why the definition of \(V_n\) contains a reference to a specific basis is that it gives us for free a coherent choice of the isomorphisms \(\alpha_{n,m}\), and the construction of the above functor \(X\) becomes rather easy. If we defined the \(V_n\) simply as free abelian groups of rank \(2n\) equipped with a symplectic form, we would have to do more work to construct \(\boxtimes\) and \(X\).
\end{rmk*}

\subsection{\(\FI\)-modules}\label{subsection:FI modules}
In this section, we investigate \(\FI\)-modules in greater detail, and give a complete characterisation of \(\FI\)-modules of finite degree.

\paragraph{Basic notions}
\(\FI\)-modules have been much studied in recent years in connection to stable representation theory of symmetric groups, and there is a wealth of literature on them. We define all the relevant terms and state the results we will be using later on, but see e.g. \cite{cefn14} for a more detailed introduction.
\begin{defn}
For \(n\geqslant0\), the \emph{\(n\)-th principal projective} \(P^n\) is the \(\FI\)-module which sends \(\bld{m}\) to the free abelian group generated by the set \(\FI(\bld{n},\bld{m})\), and similarly for arrows.

An \(\FI\)-module is \emph{free} if it is a direct sum of principal projectives.
\end{defn}

The \(P^n\) are indeed projective objects of \(\lmod{\FI}\). By the Yoneda lemma, there is an isomorphism \(\lmod{\FI}(P^n,F)\cong F_n\), natural in \(F\in\lmod{\FI}\), which sends \(\phi:P^n\to F\) to the image under \(\phi_n:P_n^n\to F_n\) of the identity \(\id_{\bld{n}}\in P_n^n\). The image of \(\phi\) in \(F\) can be seen to be the least sub-\(\FI\)-module of \(F\) which contains \(\phi_n(\id_{\bld{n}})\in F_n\).

Similarly, homomorphisms
\[\bigoplus_{i\in I}P^{n_i}\to F\]
from a fixed free \(\FI\)-module biject naturally with \(I\)-indexed sequences \((x_i\in F_{n_i})_{i\in I}\), and the image of such a homomorphism is the least sub-\(\FI\)-module of \(F\) containing \(x_i\in F_{n_i}\) for all \(i\) (\cite{cefn14}). This motivates the following definition.

\begin{defn}
Let \(F\) be an \(\FI\)-module. We say \(F\) is \emph{generated in degrees \(\leqslant k\)} if \(F\) receives a surjection
\begin{equation}\label{defn:gend in finite degrees}
\bigoplus_{i\in I}P^{n_i}\twoheadrightarrow F
\end{equation}
from a free module with \(n_i\leqslant k\) for all \(i\in I\). \(F\) is \emph{generated in finite degree} if it is generated in degrees \(\leqslant k\) for some \(k\).

\(F\) is \emph{finitely generated} if there is a surjection \eqref{defn:gend in finite degrees} with \(I\) finite.

We say \(F\) is \emph{generated in degrees \(\leqslant k\) and related in degrees \(\leqslant d\)} if there is an exact sequence
\begin{equation}\label{defn:gend and reld in finite degrees}
\bigoplus_{j\in J}P^{m_j}\to\bigoplus_{i\in I}P^{n_i}\to F\to0
\end{equation}
where each \(n_i\leqslant k\) and each \(m_j\leqslant d\). \(F\) is \emph{generated and related in finite degrees} if it is generated in degrees \(\leqslant k\) and related in degrees \(\leqslant d\) for some \(k\) and \(d\).
\end{defn}

\begin{rmk}\label{alt_defn:gend and reld in finite degrees}
Note that a surjection \(\bigoplus_{i\in I}P^{n_i}\twoheadrightarrow F\) extends to an exact sequence as in \eqref{defn:gend and reld in finite degrees} if and only if its kernel is generated in degrees \(\leqslant d\). Hence an equivalent definition of \(F\)  being generated in degrees \(\leqslant k\) and related in degrees \(\leqslant d\) is that there is a short exact sequence
\[0\to K\to\bigoplus_{i\in I}P^{n_i}\to F\to0\]
where each \(n_i\leqslant k\) and where \(K\) is generated in degrees \(\leqslant d\).
\end{rmk}

The main result of \cite{cefn14} (and the reason why \(\FI\)-modules are useful to us) is the following theorem.

\begin{thm}[\cite{cefn14}]\label{thm:FI noetherian}
The category \(\lmod{\FI}\) is noetherian: every sub-\(\FI\)-module of a finitely generated \(\FI\)-module is finitely generated.
\end{thm}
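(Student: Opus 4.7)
The plan is to reduce to showing that each principal projective $P^n$ is a noetherian $\FI$-module (i.e.\ all of its sub-$\FI$-modules are finitely generated), and then to induct on $n$ using the shift functor $\Sigma$ from Definition~\ref{defn:ker and coker}.

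\emph{Reduction to principal projectives.}\ Every finitely generated $\FI$-module is a quotient of some finite direct sum $\bigoplus_{i=1}^{r}P^{n_i}$, and noetherianity in the above sense is preserved by finite direct sums and quotients by the usual abelian-category arguments. So the theorem reduces to showing each $P^n$ is noetherian.

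\emph{Computing the shift.}\ To set up the induction I would compute $\Ker P^n$ and $\Coker P^n$ via the canonical map $P^n\rho:P^n\to\Sigma P^n$. An injection $\bld{n}\hookrightarrow\bld{1+m}$ either avoids the element $1$---contributing a copy of $P^n_m$, which is precisely the image of $P^n\rho$---or hits $1$ at some $i\in\bld{n}$ and restricts to an injection $\bld{n}\setminus\{i\}\cong\bld{n-1}\hookrightarrow\{2,\ldots,1+m\}\cong\bld{m}$, contributing $n$ further copies of $P^{n-1}_m$. This yields a short exact sequence
\[0\to P^n\to\Sigma P^n\to(P^{n-1})^{\oplus n}\to 0\]
of $\FI$-modules, so $\Ker P^n=0$ and $\Coker P^n\cong(P^{n-1})^{\oplus n}$.

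\emph{Induction on $n$.}\ The base case $n=0$ is immediate: $P^0$ is the constant $\FI$-module with value $\mathbb{Z}$ and identity transition maps, so every sub-$\FI$-module is constant at a subgroup of $\mathbb{Z}$, which is cyclic. For the inductive step, given $K\subseteq P^n$, Lemma~\ref{l:ex seq of ker and coker} applied to $0\to K\to P^n\to P^n/K\to 0$, together with $\Ker P^n=0$, gives $\Ker K=0$ and an exact sequence
\[0\to\Ker(P^n/K)\to\Coker K\to (P^{n-1})^{\oplus n}\to\Coker(P^n/K)\to 0.\]
By the inductive hypothesis, $(P^{n-1})^{\oplus n}$ is noetherian, which controls the image of $\Coker K$ there.

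\emph{Main obstacle.}\ The real difficulty is bootstrapping from this control of $\Coker K$ to finite generation of $K$ itself. One wants a Nakayama-style principle of the form: if $\Ker K=0$, $\Coker K$ is finitely generated, and the low-degree pieces $K_0,\ldots,K_N$ are finitely generated as $\Aut_\FI$-modules, then $K$ is finitely generated. Making this precise---and simultaneously handling the intrusion of $\Ker(P^n/K)$ into the long exact sequence above---is the technical heart of the proof, and in \cite{cefn14} is ultimately resolved by combining the shift-functor induction with a Gröbner/well-quasi-order argument on the poset of injections out of $\bld{n}$ (a Dickson/Higman-style statement) to certify that there are no infinite strictly ascending chains of sub-$\FI$-modules of $P^n$.
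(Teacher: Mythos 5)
The paper does not prove this theorem; it quotes it from \cite{cefn14}, so there is no in-paper argument against which to measure your proposal. Evaluating the proposal on its own terms: the reduction to principal projectives is standard and correct, and your computation is right --- the splitting $\Sigma P^n\cong P^n\oplus(P^{n-1})^{\oplus n}$, hence $\Ker P^n=0$ and $\Coker P^n\cong(P^{n-1})^{\oplus n}$, comes from sorting injections $\bld{n}\hookrightarrow\bld{1}\sqcup\bld{m}$ by whether they miss the new element (the image of $P^n\rho$) or send one of the $n$ elements of $\bld{n}$ to it. (The base case is slightly misstated: a sub-$\FI$-module of $P^0$ is an ascending, not necessarily constant, chain of subgroups of $\mathbb{Z}$; it is still finitely generated, so the conclusion stands.)

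The genuine gap is the one you flag yourself, and it is decisive for the route you propose. The exact sequence from Lemma~\ref{l:ex seq of ker and coker} controls only the image of $\Coker K$ inside $(P^{n-1})^{\oplus n}$, while the other contribution $\Ker(P^n/K)$ is itself a subobject of a shifted module whose finite generation is exactly the kind of statement you are trying to prove, so the induction does not close. Moreover, the shift-functor formalism alone supplies no Nakayama-type principle that would descend from finite generation of $\Coker K$ (together with $\Ker K=0$ and control in finitely many low degrees) to finite generation of $K$; that descent is precisely where the combinatorial content of \cite{cefn14} lives, namely a well-quasi-order (Higman's lemma) argument on injections out of $\bld{n}$, and the $\Ker$/$\Coker$ bookkeeping cannot reproduce it. So your sketch is a correct reduction plus an honest acknowledgement of the missing hard step, but it is not a proof of the theorem.
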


\paragraph{\(\FI\)-modules of finite degree}
We record the following result from \cite[Section 4]{chel16} which will be useful later on.

\begin{lemma}\label{l:coker and finite degs}
An \(\FI\)-module \(F\) is generated in finite degree if and only if \(\Coker^n F\) has degree \(-1\) for some \(n\) (where \(\Coker\) is the functor from Definition~\ref{defn:ker and coker}).
\end{lemma}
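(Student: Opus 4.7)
The plan is to prove the two implications separately. For $(\Longrightarrow)$ I would reduce to an explicit computation on principal projectives; for $(\Longleftarrow)$ I would induct on $n$, using a truncation argument and Lemma~\ref{l:ex seq of ker and coker} to bootstrap from the base case.

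For the forward direction, the key calculation is that $\Coker P^k \cong (P^{k-1})^{\oplus k}$ as $\FI$-modules. Unpacking definitions, $\rho_m:\bld{m}\to\bld{m+1}$ is the injection $i\mapsto i+1$, so $(\Coker P^k)_m$ is the quotient of $\mathbb{Z}[\FI(\bld{k},\bld{m+1})]$ by the span of injections missing $1$; the remaining basis elements $f$ are parametrised by the pre-image $i=f^{-1}(1)\in\bld{k}$ together with the residual injection $\bld{k}\setminus\{i\}\hookrightarrow\bld{m}$, and the $\FI$-structure matches that of $k$ copies of $P^{k-1}$. Since $\Coker P^0=0$, induction on $k$ gives $\Coker^{k+1}P^k=0$. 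Now $\Coker$ is right exact and commutes with arbitrary direct sums (both $\Sigma$ and $\coker$ are colimits), so given a surjection $\bigoplus_{i\in I}P^{n_i}\twoheadrightarrow F$ with $n_i\leqslant k$, applying $\Coker^{k+1}$ produces a surjection $0=\bigoplus_i\Coker^{k+1}P^{n_i}\twoheadrightarrow\Coker^{k+1}F$, whence $\Coker^{k+1}F=0$.

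For the backward direction, I would induct on $n$. The base case $n=1$ is direct: if $(\Coker F)_m=0$ for all $m\geqslant N$, then the structure map $F_m\to F_{m+1}$ induced by $\rho_m$ is surjective for such $m$, and iterating shows that $F_m$ lies in the sub-$\FI$-module generated by $F_0,\ldots,F_N$ for every $m$, so $F$ is generated in degrees $\leqslant N$. In particular $\Coker F=0$ forces generation in degree $\leqslant 0$.

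For the inductive step with $n\geqslant 2$, note that $\Coker^n F=\Coker^{n-1}(\Coker F)$, so the inductive hypothesis applied to $\Coker F$ produces an integer $k$ such that $\Coker F$ is generated in degrees $\leqslant k$. Let $G:=F^{\leqslant k+1}$ be the sub-$\FI$-module generated by $F_0,\ldots,F_{k+1}$, and apply Lemma~\ref{l:ex seq of ker and coker} to $0\to G\to F\to F/G\to 0$: then $\Coker(F/G)$ is a quotient of $\Coker F$, hence also generated in degrees $\leqslant k$. On the other hand $(F/G)_m=0$ for $m\leqslant k+1$ forces $(\Coker(F/G))_m=0$ for $m\leqslant k$, so any generating set in those degrees is zero and $\Coker(F/G)=0$. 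The base case applied to $F/G$ then gives $F/G$ generated in degree $\leqslant 0$, hence zero, so $F=G$ is generated in degrees $\leqslant k+1$. The main obstacle I expect is precisely this truncation step: one must pick $G$ simultaneously large enough to make $(F/G)_m$ vanish through the generation range of $\Coker F$, and the snake lemma is the tool that then forces the quotient to be trivial.
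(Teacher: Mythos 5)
Your proof is correct, and it takes a genuinely different route from the paper's. The paper disposes of this lemma in a few lines: it cites \cite{chel16} for the statement ``$F$ is generated in finite degree $\iff$ $\Coker^n F = 0$ for some $n$'' and then merely observes that the ``degree $-1$'' and ``eventually zero'' conditions are interchangeable (if $(\Coker^n F)_m = 0$ for $m \geqslant M$ then one checks $\Coker^{n+M}F = 0$). You instead give a self-contained argument that reproves the cited result. Your forward direction hinges on the explicit computation $\Coker P^k \cong (P^{k-1})^{\oplus k}$ (and hence $\Coker^{k+1}P^k = 0$), combined with right-exactness of $\Coker$ and its commutation with direct sums; that is a clean way to handle generation in finite degree. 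Your backward direction inducts on $n$, with a neat truncation argument: the inductive hypothesis gives $\Coker F$ generated in degrees $\leqslant k$; setting $G = F^{\leqslant k+1}$ and applying Lemma~\ref{l:ex seq of ker and coker} shows $\Coker(F/G)$ is both generated in degrees $\leqslant k$ and vanishes through degree $k$, hence is zero, which then forces $F/G = 0$. This is all sound; the one thing to observe is that the paper's version buys brevity at the cost of being non-self-contained, while yours makes visible the structural reason (the behaviour of $\Coker$ on principal projectives) why the equivalence holds, which is arguably more illuminating and is presumably close in spirit to the argument in \cite{chel16} itself.
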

\begin{proof}
\cite{chel16} proves the following closely related statement: \(F\) is generated in finite degree if and only if \(\Coker^n F=0\) for some \(n\). Our statement is equivalent: the zero \(\FI\)-module certainly has degree \(-1\). Conversely, if \(\Coker^nF\) has degree \(-1\), then there exists \(M\) such that \((\Coker^nF)_m=0\) for all \(m\geqslant M\). But then \(\Coker^{n+M}F=0\).
\end{proof}

It can be seen that the definition of an \(\FI\)-module \(F\) having finite degree does not depend on the structure of \(F\) in any finite set of degrees: \(F\) has degree \(-1\) if it is \emph{eventually} 0, and higher degrees are defined recursively. Hence the following definition will be useful in our investigation of modules of finite degree.

\begin{defn}
We say that a morphism \(\phi:F\to F'\) of \(\FI\)-modules is \emph{eventually isomorphic} if \(\phi_n:F_n\to F'_n\) is an isomorphism for all sufficiently large \(n\).
\end{defn}

Suppose that \(0\to K\to P\xrightarrow{\pi} F\to0\) is a short exact sequence of \(\FI\)-modules. \(\Coker\) is right-exact, so for every \(i\geqslant0\), the sequence
\[\Coker^iK\to\Coker^iP\xrightarrow{\Coker^i\pi}\Coker^iF\to0\]
is exact. Let \(K^{(i)}\) be the kernel of the morphism \(\Coker^i\pi\), so \(K^{(0)}=K\) and there is an induced morphism \(u_i:\Coker^iK\to K^{(i)}\).

\begin{lemma}\label{cokers_event_isom}
Let \(0\to K\to P\to F\to0\) and \(K^{(i)}\) be as above, and suppose that \(F\) has finite degree. Then for every \(i\), \(u_i:\Coker^iK\to K^{(i)}\) is eventually isomorphic.
\end{lemma}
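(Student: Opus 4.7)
The plan is to prove this by induction on \(i\). For the base case \(i=0\), note that \(K^{(0)}=\ker(\pi)=K\) and that \(u_0:\Coker^0 K=K\to K\) is the identity, so there is nothing to prove.

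Two auxiliary facts will be useful for the inductive step. First, the endofunctor \(\Coker\) preserves eventual isomorphisms, since by Definition~\ref{defn:ker and coker}, \((\Coker\phi)_n\) depends only on \(\phi_n\) and \(\phi_{n+1}\); in particular, the inductive hypothesis that \(u_i\) is eventually isomorphic upgrades to \(\Coker u_i:\Coker^{i+1}K\to\Coker K^{(i)}\) being eventually isomorphic. Second, if \(F\) has finite degree then so does \(\Coker^i F\) for every \(i\geqslant 0\) (a routine induction using the definition, together with the remark that a module of degree \(-1\) has cokernel of degree \(-1\)), and consequently \(\Ker\Coker^i F\) has degree \(-1\), i.e.\ is eventually zero.

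The main step is to apply \(\Coker\) to the short exact sequence
\[0\to K^{(i)}\to\Coker^i P\to\Coker^i F\to0\]
(short exact since \(\Coker^i\pi\) is surjective by right-exactness of \(\Coker\)) and to feed the result into Lemma~\ref{l:ex seq of ker and coker}, obtaining the exact sequence
\[\Ker\Coker^i F\to\Coker K^{(i)}\to\Coker^{i+1} P\xrightarrow{\Coker^{i+1}\pi}\Coker^{i+1}F\to0.\]
By the very definition of \(K^{(i+1)}\) as the kernel of \(\Coker^{i+1}\pi\), the image of \(\Coker K^{(i)}\to\Coker^{i+1}P\) equals \(K^{(i+1)}\), so we obtain a surjection \(\Coker K^{(i)}\twoheadrightarrow K^{(i+1)}\) whose kernel is a quotient of \(\Ker\Coker^i F\) and is therefore eventually zero. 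Hence this surjection is eventually isomorphic.

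Finally, a short naturality check identifies \(u_{i+1}\) with the composite \(\Coker^{i+1}K\xrightarrow{\Coker u_i}\Coker K^{(i)}\twoheadrightarrow K^{(i+1)}\): both maps are the unique factorisation of \(\Coker^{i+1}\iota\) through the inclusion \(K^{(i+1)}\hookrightarrow\Coker^{i+1}P\), where \(\iota:K\hookrightarrow P\) is the original inclusion. Composing the two eventual isomorphisms then closes the induction. The main conceptual obstacle is setting up this last naturality/factorisation correctly; once it is in place the rest is routine snake-lemma bookkeeping combined with the elementary observation on finite degrees.
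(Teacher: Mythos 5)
Your proof is correct and follows essentially the same route as the paper: induction on $i$, applying $\Coker$ to the short exact sequence $0\to K^{(i)}\to\Coker^i P\to\Coker^i F\to 0$, invoking Lemma~\ref{l:ex seq of ker and coker}, and using the finite degree of $F$ to force $\Ker\Coker^i F$ to vanish eventually. You actually spell out two steps the paper leaves implicit — that $\Coker$ preserves eventual isomorphisms (needed to promote the inductive hypothesis on $u_i$ to $\Coker u_i$), and that $\Coker^i F$ inherits finite degree so that $\Ker\Coker^i F$ has degree $-1$ — as well as the naturality check identifying $u_{i+1}$ with the composite, which the paper asserts without proof.
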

\begin{proof}
We proceed by induction on \(i\). When \(i=0\), there is nothing to prove.

Suppose the claim has been established for \(i-1\). Applying \(\Coker\) to the short exact sequence \(0\to K^{(i-1)}\to\Coker^{i-1}P\to\Coker^{i-1}F\to0\) yields the exact sequence
\begin{equation*}
\begin{tikzcd}
\Ker\Coker^{i-1}F \ar[r] & \Coker K^{(i-1)} \ar[rr] \ar[rd, two heads, "q"'] && \Coker^iP \ar[r] & \Coker^iF \ar[r] & 0\\
&& K^{(i)} \ar[ru, hook]
\end{tikzcd}
\end{equation*}
by Lemma~\ref{l:ex seq of ker and coker}. Since \(F\) has finite degree, \(\Ker\Coker^{i-1}F\) has degree \(-1\), so the morphism \(q:\Coker K^{(i-1)}\to K^{(i)}\) is eventually isomorphic. \(\Coker u_{i-1}\) is eventually isomorphic by the inductive hypothesis. Hence \(u_i\), which is equal to the composition
\[\Coker^iK\xrightarrow{\Coker u_{i-1}}\Coker K^{(i-1)}\xrightarrow{q} K^{(i)},\]
is also eventually isomorphic.
\end{proof}

Now we are ready to characterise \(\FI\)-modules of finite degree.

\begin{prop}\label{fin_deg_character}
Let \(F\) be an \(\FI\)-module. Then \(F\) has finite degree if and only if \(F\) is generated and related in finite degrees.
\end{prop}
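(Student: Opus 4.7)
The plan is to prove both directions by induction, resting on the preliminary computation that the principal projective $P^n$ has degree exactly $n$. The shift $P^n\rho$ is injective, since distinct injections $\bld{n}\to\bld{m}$ remain distinct after composition with $\rho_m$, so $\Ker P^n=0$; and splitting injections $\bld{n}\to\bld{m+1}$ according to which element of $\bld{n}$ (if any) is sent to $1\in\bld{m+1}$ gives a natural isomorphism $\Coker P^n\cong(P^{n-1})^{\oplus n}$, which has degree $n-1$ by induction. Since $\Ker$ and $\Coker$ commute with direct sums, every free $\FI$-module generated in degrees $\leqslant k$ has degree $\leqslant k$.

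For $(\Rightarrow)$, suppose $F$ has degree $d$. Iterating the definition gives $\Coker^{d+1}F$ of degree $-1$, so by Lemma~\ref{l:coker and finite degs} $F$ is generated in finite degree. Pick a surjection $\pi\colon P\twoheadrightarrow F$ with $P$ free in degrees $\leqslant k$ and set $K=\ker\pi$; by the preliminary observation $P$ has degree $\leqslant k$, hence $\Coker^{k+1}P$ is eventually zero, which forces $K^{(k+1)}=\ker(\Coker^{k+1}\pi)\subseteq\Coker^{k+1}P$ to be eventually zero as well. Lemma~\ref{cokers_event_isom} says that $\Coker^{k+1}K$ is eventually isomorphic to $K^{(k+1)}$, hence eventually zero; Lemma~\ref{l:coker and finite degs} applied to $K$ then shows that $K$ is generated in finite degree, so $F$ is generated and related in finite degrees.

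For $(\Leftarrow)$, I would induct on $\max(k,d)$, where $F$ is generated in degrees $\leqslant k$ and related in degrees $\leqslant d$; the base case $\max(k,d)=-1$ is trivial. Fix a presentation $Q\to P\to F\to 0$ with $Q$, $P$ free in degrees $\leqslant d$, $\leqslant k$. Right-exactness of $\Coker$ yields an exact sequence $\Coker Q\to\Coker P\to\Coker F\to 0$; since $\Coker P^n\cong(P^{n-1})^{\oplus n}$, both $\Coker Q$ and $\Coker P$ are free in degrees $\leqslant d-1$ and $\leqslant k-1$ respectively, so $\Coker F$ is generated in degrees $\leqslant k-1$ and related in degrees $\leqslant d-1$. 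The inductive hypothesis then gives $\Coker F$ of finite degree.

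The remaining task, showing $\Ker F$ has degree $-1$, is the main obstacle. Applying Lemma~\ref{l:ex seq of ker and coker} to $0\to K\to P\to F\to 0$ and using $\Ker P=0$, one obtains an injection $\Ker F\hookrightarrow\Coker K$ whose image is the kernel of the natural map $\Coker K\to\Coker P$. Covering $K$ by a free $Q'\twoheadrightarrow K$ in degrees $\leqslant d$ with kernel $L$ gives an exact sequence $\Coker L\to\Coker Q'\to\Coker K\to 0$, so $\Coker K$ is a quotient of the free $\Coker Q'$ of degree $\leqslant d-1$. Combining this with the inductive control on $\Coker F$, and applying Lemma~\ref{cokers_event_isom} to the short exact sequence $0\to\Ker F\to\Coker K\to\im(\Coker K\to\Coker P)\to 0$, should force the iterates $\Coker^i\Ker F$ to vanish eventually, which closes the induction. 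Making this last step fully rigorous is the technical heart of the argument.
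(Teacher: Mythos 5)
Your forward direction (finite degree $\Rightarrow$ generated and related in finite degrees) is correct and follows the paper's proof almost exactly: both use Lemma~\ref{l:coker and finite degs} to get a presentation $0\to K\to P\to F\to 0$ and then Lemma~\ref{cokers_event_isom} to conclude $K$ is generated in finite degree. Your preliminary computation $\Ker P^n=0$, $\Coker P^n\cong(P^{n-1})^{\oplus n}$ is a correct and pleasant way to see that free modules generated in degrees $\leqslant k$ have degree $\leqslant k$, which slightly sharpens what the paper extracts from Lemma~\ref{l:coker and finite degs}.

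The reverse direction is where you diverge from the paper: the paper simply cites \cite[Proposition~4.18]{rww15}, whereas you try to give a direct inductive proof. Your handling of $\Coker F$ (presentation $\Coker Q\to\Coker P\to\Coker F\to 0$, degrees drop by one, induct on $\max(k,d)$) is fine. But the remaining step — showing $\Ker F$ has degree $-1$ — is where the argument breaks down, and it is not just a matter of polishing. Two issues. First, you propose to apply Lemma~\ref{cokers_event_isom} to $0\to\Ker F\to\Coker K\to\im(\Coker K\to\Coker P)\to 0$, but that lemma requires the cokernel term of the short exact sequence to have finite degree, and nothing you have proved establishes this for $\im(\Coker K\to\Coker P)$; it is a sub-$\FI$-module of the free $\Coker P$, but submodules of free modules need not have finite degree a priori (indeed controlling such submodules is essentially the noetherian problem). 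Second, even granting the application, the conclusion you aim for — that $\Coker^i\Ker F$ eventually vanishes — is weaker than what you need: it would show $\Ker F$ has \emph{finite} degree, whereas degree $-1$ means $(\Ker F)_n=0$ for all large $n$, a strictly stronger statement. Eventual injectivity of the shift maps $F_n\to F_{n+1}$ is a genuinely substantive claim and does not follow from the cokernel-side bookkeeping you have set up. This is the precise content that \cite[Proposition~4.18]{rww15} supplies, and you should either cite it as the paper does or find a fundamentally different mechanism for controlling $\Ker F$.
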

\begin{proof}
The statement that \(\FI\)-modules which are generated and related in finite degrees have finite degree, is precisely the content of \cite[Proposition~4.18]{rww15}.

To prove the other direction, suppose \(F\) has finite degree. Then \(F\) is generated in finite degree by Lemma~\ref{l:coker and finite degs}, so there is a short exact sequence
\[0\to K\to L\to F\to0\]
where \(L\) is free and generated in finite degree. If we can show that \(K\) is generated in finite degree, we are done by Remark~\ref{alt_defn:gend and reld in finite degrees}.

Using the notation of Lemma~\ref{cokers_event_isom}, there are eventual isomorphisms \(\Coker^i K\to K^{(i)}\). But by Lemma~\ref{l:coker and finite degs}, \(\Coker^iL\) has degree \(-1\) for some \(i\). Since \(K^{(i)}\) is a sub-\(\FI\)-module of \(\Coker^iL\), it also has degree \(-1\), and so \(\Coker^iK\) has degree \(-1\) by virtue of being eventually isomorphic to \(K^{(i)}\). Hence \(K\) is generated in finite degree.
\end{proof}

\begin{cor}
Finitely generated \(\FI\)-modules have finite degree.
\end{cor}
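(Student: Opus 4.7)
The plan is to combine Proposition~\ref{fin_deg_character} with the noetherian property of \(\lmod{\FI}\) (Theorem~\ref{thm:FI noetherian}). Since Proposition~\ref{fin_deg_character} already equates ``finite degree'' with ``generated and related in finite degrees,'' the task reduces to showing that any finitely generated \(\FI\)-module \(F\) admits a presentation
\[0\to K\to\bigoplus_{i\in I}P^{n_i}\to F\to0\]
with \(I\) finite, the \(n_i\) bounded, and \(K\) generated in some finite degree.

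First I would take the finite generation hypothesis literally: by definition, there is a surjection \(\pi:\bigoplus_{i\in I}P^{n_i}\twoheadrightarrow F\) with \(I\) finite, hence automatically all \(n_i\) are bounded by some \(k\), so \(F\) is generated in degrees \(\leqslant k\). Next I would apply Theorem~\ref{thm:FI noetherian}: the free module \(\bigoplus_{i\in I}P^{n_i}\) is a finite direct sum of principal projectives and so is itself finitely generated, hence its sub-\(\FI\)-module \(K=\ker\pi\) is finitely generated. In particular \(K\) is generated in degrees \(\leqslant d\) for some \(d\).

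By Remark~\ref{alt_defn:gend and reld in finite degrees}, this data says exactly that \(F\) is generated in degrees \(\leqslant k\) and related in degrees \(\leqslant d\), so \(F\) is generated and related in finite degrees. Proposition~\ref{fin_deg_character} then yields finite degree, concluding the proof. There is no real obstacle: the noetherian theorem is doing all the work, since the only non-trivial input beyond finite generation is that the module of relations is itself finitely generated, which is exactly what Theorem~\ref{thm:FI noetherian} provides.
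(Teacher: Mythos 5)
Your proof is correct and follows essentially the same route as the paper: take a finite presentation, apply Theorem~\ref{thm:FI noetherian} to conclude the kernel is finitely generated, deduce ``generated and related in finite degrees,'' and invoke Proposition~\ref{fin_deg_character}. The only difference is that you spell out the final appeal to Proposition~\ref{fin_deg_character}, which the paper leaves implicit.
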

\begin{proof}
Suppose \(F\) is a finitely generated \(\FI\)-module, \emph{i.e.} there is a surjection as in \eqref{defn:gend in finite degrees} with \(I\) finite. The kernel of this surjection is finitely generated by Theorem~\ref{thm:FI noetherian}, so \(F\) is generated and related in finite degrees.
\end{proof}

To prove our second corollary, we need the following technical lemma.

\begin{lemma}\label{P_tensors}
For every \(n,m\), the \(\FI\)-module \(P^n\otimes P^m:\FI\xrightarrow{P^n\times P^m}\Ab\times\Ab\xrightarrow{\otimes}\Ab\) is free and finitely generated in degrees \(\leqslant n+m\).
\end{lemma}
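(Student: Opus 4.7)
The plan is to exhibit an explicit natural decomposition of $P^n\otimes P^m$ as a finite direct sum of principal projectives $P^j$ with $j\leqslant n+m$, which settles both assertions simultaneously.

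Unfolding definitions, $(P^n\otimes P^m)_k$ is the free abelian group on ordered pairs $(f,g)$ of injections $f:\bld n\hookrightarrow\bld k$ and $g:\bld m\hookrightarrow\bld k$. To each such pair I would attach its \emph{overlap data}: the subsets $A\coloneqq f^{-1}(g(\bld m))\subseteq\bld n$ and $B\coloneqq g^{-1}(f(\bld n))\subseteq\bld m$, together with the bijection $\sigma:A\xrightarrow{\sim}B$ uniquely determined by $f|_A=g\circ\sigma$. The first step is to observe that for any injection $h:\bld k\hookrightarrow\bld l$, the pair $(h\circ f,h\circ g)$ has exactly the same overlap data as $(f,g)$, since $h$ is injective and therefore preserves image intersections. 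Hence decomposing $(P^n\otimes P^m)_k$ by overlap type is natural in $\bld k$.

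The second step is to identify, for a fixed overlap datum $(A,B,\sigma)$, the set of pairs $(f,g)$ with this overlap data with injections of a single finite set into $\bld k$. To do this I would form the pushout $\bld n\sqcup_\sigma\bld m$, i.e. the quotient of $\bld n\sqcup\bld m$ identifying $a\in A$ with $\sigma(a)\in B$. A short check shows that $f$ and $g$ glue to a well-defined injection $\bld n\sqcup_\sigma\bld m\hookrightarrow\bld k$, and that every injection of the pushout arises from exactly one such pair with the prescribed overlap data. Since $|\bld n\sqcup_\sigma\bld m|=n+m-|A|$, any bijection with $\bld{n+m-|A|}$ identifies this summand with $P^{n+m-|A|}$. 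Combining with naturality gives
\[
P^n\otimes P^m\;\cong\;\bigoplus_{(A,B,\sigma)}P^{n+m-|A|},
\]
the sum ranging over the finite set of triples $(A,B,\sigma)$ with $A\subseteq\bld n$, $B\subseteq\bld m$ and $\sigma:A\xrightarrow{\sim}B$ a bijection. Every summand is a principal projective in a degree at most $n+m$, and there are only finitely many summands, so the claim follows.

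I do not expect a serious obstacle: the argument is essentially combinatorial bookkeeping, and the only real idea is to stratify $\FI(\bld n,\bld k)\times\FI(\bld m,\bld k)$ by the overlap pattern of the two injections rather than trying to manipulate the tensor product directly. The one point that needs care is naturality in $\bld k$, but this reduces immediately to the observation that postcomposition with an injection cannot alter the overlap data.
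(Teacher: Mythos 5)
Your proof is correct and takes essentially the same approach as the paper: both stratify pairs of injections by their overlap data $(A,B,\sigma)$ (the paper writes $(N,M,\sigma)$), observe that postcomposition preserves this data, and identify each stratum with a principal projective $P^{n+m-|A|}$ via the pushout $\bld{n}\sqcup_\sigma\bld{m}$.
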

\begin{proof}
Write \(F\coloneqq P^n\otimes P^m\). For every \(k\), \(F_k=P_k^n\otimes P_k^m\) is a free abelian group with basis all pairs \((f:\bld{n}\hookrightarrow\bld{k},g:\bld{m}\hookrightarrow\bld{k})\) of injections. To each such pair, we associate the following data:
\begin{align*}
N&\coloneqq f^{-1}(\im f\cap\im g)\subset\bld{n}\\
M&\coloneqq g^{-1}(\im f\cap\im g)\subset\bld{m}\\
\sigma&\coloneqq N\xrightarrow{f}\im f\cap\im g\xrightarrow{g^{-1}}M.
\end{align*}
Now observe that if \(h:\bld{k}\to\bld{l}\) is a morphism in \(\FI\), then \(h_{\ast}(f,g)=(h\circ f,h\circ g)\in F_l\) has the same associated data as \((f,g)\). It follows that \(F\) splits as a direct sum of factors \(T^{(N,M,\sigma)}\) indexed by triples \((N,M,\sigma)\) where \(N\subset\bld{n}\), \(M\subset\bld{m}\) and \(\sigma:N\xrightarrow{\sim}M\) is an isomorphism: \(T_k^{(N,M,\sigma)}\) is the subgroup of \(F_k\) generated by precisely those basis elements \((f,g)\) whose associated data are the prescribed \((N,M,\sigma)\).

Now we claim that each \(T^{(N,M,\sigma)}\) is in fact a principal projective. Indeed, generators \((f,g)\) of \(T_k^{(N,M,\sigma)}\) biject naturally (in \(\bld{k}\in\FI\)) with maps \(\bld{n}\cup_{\sigma}\bld{m}\to\bld{k}\), where \(\bld{n}\cup_{\sigma}\bld{m}\) is the pushout of \(\bld{n}\hookleftarrow N\xrightarrow{\sigma}\bld{m}\). Hence \(T^{(N,M,\sigma)}\) is isomorphic to the principal projective generated in degree \(\vert\bld{n}\cup_{\sigma}\bld{m}\vert=n+m-\vert N\vert\leqslant n+m\).
\end{proof}

\begin{rmk*}
Note that \(\operatorname{rank}_{\mathbb{Z}}P_k^n=n!\binom{k}{n}\). By computing the rank of \(P_k^n\otimes P_k^m\) in two ways using the above proof, we obtain the following rather nice identity
\[\sum_{a}a!(n+m-a)!\binom{n}{a}\binom{m}{a}\binom{k}{n+m-a}=n!m!\binom{k}{n}\binom{k}{m}\]
where \(a!\binom{n}{a}\binom{m}{a}\) is the number of different triples \((N,M,\sigma)\) with \(\vert N\vert=a\).
\end{rmk*}

\begin{cor}\label{fin_deg_tensors}
If \(F,F'\) are \(\FI\)-modules of finite degree, then so is the degree-wise tensor product \(F\otimes F':\FI\xrightarrow{F\times F'}\Ab\times\Ab\xrightarrow{\otimes}\Ab\).
\end{cor}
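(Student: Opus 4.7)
The strategy is to apply Proposition~\ref{fin_deg_character} in both directions. By that proposition, I may start with presentations $P_F^{(1)}\xrightarrow{\phi_F} P_F^{(0)}\xrightarrow{q_F}F\to 0$ and $P_{F'}^{(1)}\xrightarrow{\phi_{F'}} P_{F'}^{(0)}\xrightarrow{q_{F'}}F'\to 0$ in which the four $P$'s are free $\FI$-modules generated in uniformly bounded degrees. The kernels $K_F\coloneqq\ker q_F$ and $K_{F'}\coloneqq\ker q_{F'}$ are the images of $\phi_F$ and $\phi_{F'}$, and hence are quotients of $P_F^{(1)}$ and $P_{F'}^{(1)}$ respectively. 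My goal is to produce a bounded presentation of $F\otimes F'$, after which Proposition~\ref{fin_deg_character} will deliver the conclusion.

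For generation: the tensor of surjections $q_F\otimes q_{F'}\colon P_F^{(0)}\otimes P_{F'}^{(0)}\twoheadrightarrow F\otimes F'$ is itself a surjection, and by Lemma~\ref{P_tensors} its source is free and generated in bounded degrees. So $F\otimes F'$ is generated in finite degrees.

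For relations, I must show the kernel $K$ of $q_F\otimes q_{F'}$ is also generated in finite degrees. The crucial observation is that $P_F^{(0)}$ is pointwise a free abelian group, so $P_F^{(0)}\otimes -$ is pointwise, hence globally, exact on $\FI$-modules. Factoring $q_F\otimes q_{F'}=(q_F\otimes\id_{F'})\circ(\id_{P_F^{(0)}}\otimes q_{F'})$ and using this flatness yields a short exact sequence
\[0\to P_F^{(0)}\otimes K_{F'}\to K\to \ker(q_F\otimes\id_{F'})\to 0.\]
The left-hand term is a quotient of $P_F^{(0)}\otimes P_{F'}^{(1)}$, while the right-hand term is, by right-exactness of $-\otimes F'$ applied to $K_F\hookrightarrow P_F^{(0)}\to F$, a quotient of $K_F\otimes F'$, which in turn is a quotient of $P_F^{(1)}\otimes P_{F'}^{(0)}$. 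Lemma~\ref{P_tensors} exhibits both $P_F^{(0)}\otimes P_{F'}^{(1)}$ and $P_F^{(1)}\otimes P_{F'}^{(0)}$ as free and generated in bounded degrees. Since generation in finite degrees is closed under quotients and extensions, $K$ is generated in finite degrees, so $F\otimes F'$ is generated and related in finite degrees.

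The main obstacle is that the tensor product is only right exact, so one cannot directly tensor the two presentations to obtain a presentation of $F\otimes F'$. The workaround is to split the computation through the intermediate module $P_F^{(0)}\otimes F'$, trading on pointwise flatness of the free module $P_F^{(0)}$ on one side and right-exactness of $-\otimes F'$ on the other.
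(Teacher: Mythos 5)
Your proof is correct and follows essentially the same route as the paper's: both use Proposition~\ref{fin_deg_character} in both directions together with Lemma~\ref{P_tensors}. The paper simply writes down the low-degree part of the total complex of the tensored presentations,
\[(L_0\otimes L'_1)\oplus(L_1\otimes L'_0)\to L_0\otimes L'_0\to F\otimes F'\to 0,\]
invoking the standard right-exactness-of-tensor fact, whereas you unpack that same exactness by factoring the surjection through $P_F^{(0)}\otimes F'$, using pointwise flatness of $P_F^{(0)}$ to split the kernel into a short exact sequence, and covering each outer term by a tensor of principal projectives. The extra appeal to flatness is not strictly necessary (right-exactness alone suffices to show $(P_F^{(0)}\otimes P_{F'}^{(1)})\oplus(P_F^{(1)}\otimes P_{F'}^{(0)})$ surjects onto the kernel), but your argument is valid, and the two proofs produce the same generators and relations.
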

\begin{proof}
By Proposition~\ref{fin_deg_character}, there are exact sequences
\begin{gather*}
L_1\to L_0\to F\to0\\
L'_1\to L'_0\to F'\to0
\end{gather*}
where the \(L_i,L'_i\) are free and generated in finite degree. Considering the total complex of the bicomplex \(L_{\ast}\otimes L'_{\ast}\) yields the exact sequence
\[(L_0\otimes L'_1)\oplus(L_1\otimes L'_0)\to L_0\otimes L'_0\to F\otimes F'\to 0.\]
But the first two terms in the sequence are free and generated in finite degree by the previous lemma, and so \(F\otimes F'\) has finite degree.
\end{proof}

\begin{cor}[of Lemma~\ref{P_tensors}; see also \cite{cefn14}]\label{tensor_products_fg}
If \(F,F'\) are finitely generated \(\FI\)-modules, then so is \(F\otimes F'\).
\end{cor}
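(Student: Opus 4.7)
The plan is to mimic the argument of Corollary~\ref{fin_deg_tensors} but starting from \emph{finite} presentations and invoking Lemma~\ref{P_tensors} in place of Proposition~\ref{fin_deg_character}. Concretely, pick finite surjections \(\bigoplus_{i\in I}P^{n_i}\twoheadrightarrow F\) and \(\bigoplus_{j\in J}P^{m_j}\twoheadrightarrow F'\) with \(I,J\) finite, witnessing finite generation of \(F\) and \(F'\).

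The first step is to observe that the degree-wise tensor product preserves surjections and distributes over finite direct sums. Indeed, tensor product of abelian groups is right-exact, so at each \(k\in\FI\) the composition
\[\Bigl(\bigoplus_{i\in I}P^{n_i}_k\Bigr)\otimes\Bigl(\bigoplus_{j\in J}P^{m_j}_k\Bigr)\twoheadrightarrow F_k\otimes\Bigl(\bigoplus_{j\in J}P^{m_j}_k\Bigr)\twoheadrightarrow F_k\otimes F'_k\]
is surjective, and the left-hand side is naturally isomorphic to \(\bigoplus_{(i,j)\in I\times J}P^{n_i}_k\otimes P^{m_j}_k\) by distributivity. Hence there is a surjection of \(\FI\)-modules
\[\bigoplus_{(i,j)\in I\times J}P^{n_i}\otimes P^{m_j}\twoheadrightarrow F\otimes F'.\]

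Now Lemma~\ref{P_tensors} tells us that each \(P^{n_i}\otimes P^{m_j}\) is free and finitely generated (indeed, in degrees \(\leqslant n_i+m_j\)). Since \(I\times J\) is finite, the source of the above surjection is a finite direct sum of finitely generated \(\FI\)-modules, hence finitely generated. A quotient of a finitely generated \(\FI\)-module is clearly finitely generated (compose generating maps with the quotient), so \(F\otimes F'\) is finitely generated, completing the proof.

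There is no real obstacle here beyond correctly identifying the source: the whole argument is a diagram chase once Lemma~\ref{P_tensors} is in hand. The only point requiring a moment's care is that tensor products commute with direct sums at the level of \(\FI\)-modules when the tensor product is defined degree-wise, but this reduces immediately to the corresponding fact for abelian groups applied pointwise.
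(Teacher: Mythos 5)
Your proof is correct and is exactly the argument the paper is pointing to by labelling this a corollary of Lemma~\ref{P_tensors}: tensor finite presenting surjections, distribute over the finite direct sum, and apply Lemma~\ref{P_tensors} to see the source is finitely generated. Nothing to add.
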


We end this section with the following lemma. Using the monoidal structure of \(\FI\), we can define the `duplication functor'
\[D:\FI\xrightarrow{\Delta}\FI\times\FI\xrightarrow{\sqcup}\FI\]
where \(\Delta\) is the usual diagonal functor. \(D\) is clearly not a functor of CONs, so Lemma~\ref{l:degree of pullback} does not apply, but we still have the following result.

\begin{lemma}\label{duplication_fg}
\(D^{\ast}:\lmod{\FI}\to\lmod{\FI}\) takes finitely generated \(\FI\)-modules to finitely generated \(\FI\)-modules.
\end{lemma}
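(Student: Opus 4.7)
The plan is to reduce the statement to a computation for principal projectives. First I would note that $D^*$ is exact and commutes with arbitrary direct sums: the construction $(D^*F)_n = F_{2n}$ is pointwise, and kernels, cokernels and direct sums in $\lmod{\FI}$ are computed pointwise. Consequently, a finite surjection $\bigoplus_{i\in I}P^{n_i}\twoheadrightarrow F$ witnessing the finite generation of $F$ pulls back under $D^*$ to a surjection $\bigoplus_{i\in I}D^*P^{n_i}\twoheadrightarrow D^*F$ from a finite direct sum. It therefore suffices to show that $D^*P^n$ is finitely generated for every $n$.

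The main step is to identify $D^*P^n$ as an explicit finite direct sum of tensor products of smaller principal projectives. For each $k$, the abelian group $(D^*P^n)_k = P^n_{2k}$ is free on the injections $f:\bld{n}\hookrightarrow \bld{2k}$. Any such $f$ is uniquely determined by the decomposition $\bld{n} = A\sqcup B$, where $A = f^{-1}(\{1,\ldots,k\})$ and $B = f^{-1}(\{k+1,\ldots,2k\})$, together with the two induced injections $A\hookrightarrow \bld{k}$ and $B\hookrightarrow \bld{k}$. Using the explicit formula for $\sqcup$ recalled earlier in the paper, post-composition by $D(g) = g\sqcup g$ for any $g:\bld{k}\hookrightarrow \bld{l}$ preserves this decomposition, since $g\sqcup g$ sends the two halves of $\bld{2k}$ into the corresponding halves of $\bld{2l}$. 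This yields a natural isomorphism of $\FI$-modules
\[D^*P^n \;\cong\; \bigoplus_{\bld{n}=A\sqcup B} P^{|A|}\otimes P^{|B|},\]
where the sum ranges over the $2^n$ ordered decompositions of $\bld{n}$.

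Since this is a finite direct sum and each summand is finitely generated (in fact free) by Lemma~\ref{P_tensors} (or equivalently by Corollary~\ref{tensor_products_fg}), $D^*P^n$ is finitely generated, which finishes the proof. The only non-trivial piece of bookkeeping is verifying naturality of the decomposition above in $\bld{k}$, which comes down to a direct unwinding of the definition of $\sqcup$; everything else is formal.
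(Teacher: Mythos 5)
Your proof is correct, and it takes a genuinely different route from the paper's. The paper reduces to $D^*P^n$ in the same way, but then shows $D^*P^n$ is generated in degrees $\leqslant n$ directly: given a generator $f:\bld{n}\hookrightarrow\bld{2m}$ with $m>n$, a pigeonhole argument produces $i\in\bld{m}$ with $i,i+m\notin\im f$, and then one factors $f = (g\sqcup g)\circ f'$ with $g:\bld{m-1}\hookrightarrow\bld{m}$ omitting $i$ and $f'$ of degree $m-1$. Since each $(D^*P^n)_m$ is finitely generated as an abelian group, this gives finite generation without invoking any external structure results. Your argument is more structural: the decomposition
\[D^*P^n\cong\bigoplus_{\bld{n}=A\sqcup B}P^{|A|}\otimes P^{|B|}\]
(which I checked: the formula for $\sqcup$ does send $\{1,\ldots,k\}$ into $\{1,\ldots,l\}$ and $\{k+1,\ldots,2k\}$ into $\{l+1,\ldots,2l\}$, so the splitting is natural) reduces everything to Lemma~\ref{P_tensors}. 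This buys you more — you learn that $D^*P^n$ is a free $\FI$-module, not merely finitely generated, and you recover the same generation bound $\leqslant n$ since $|A|+|B|=n$ — at the cost of depending on \ref{P_tensors}, whereas the paper's pigeonhole argument is self-contained. Both are valid.
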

\begin{proof}
\(D^{\ast}\) clearly preserves epimorphisms (in fact it is exact). Hence it suffices to prove that \(D^{\ast}P^n\) is finitely generated for every \(n\). We show \(D^{\ast}P^n\) is generated in degrees \(\leqslant n\). Since for every \(m\), \((D^{\ast}P^n)_m=P_{2m}^n\) is a finitely generated abelian group, this will complete the proof.

Let \(f:\bld{n}\to\bld{2m}\) be a generator of \((D^{\ast}P^n)_m\) where \(m>n\). Then there is some \(i\in\bld{m}\) such that \emph{both} \(i,i+m\in\bld{2m}\) are \emph{not} in the image of \(f\). Let \(g:\bld{m-1}\to\bld{m}\) be the unique order-preserving injection which misses \(i\). Define \(f':\bld{n}\to\bld{2m-2}\) by
\[
f'(j)=
\begin{cases}
f(j) & \mbox{if } f(j)<i,\\
f(j)-1 & \mbox{if } i<f(j)<i+m,\\
f(j)-2 & \mbox{if } i+m<f(j).
\end{cases}
\]
Then \(f=(g\sqcup g)\circ f'\), so \(g_{\ast}:(D^{\ast}P^n)_{m-1}\to(D^{\ast}P^n)_m\) takes \(f'\) to \(f\).
\end{proof}

\section{Automorphisms of free nilpotent groups}\label{section:automs of free nilps}
\paragraph{Lower central series}
Recall that the lower central series \(\lbrace\gamma_k(G)\rbrace_{k\geqslant1}\) of a group \(G\) is defined inductively by setting \(\gamma_1(G)=G\) and \(\gamma_(G)=[G,\gamma_{k-1}(G)]\) for \(k>1\). The successive quotients \(\graded_kG\coloneqq\gamma_k(G)/\gamma_{k+1}(G)\) are abelian. In fact, \([\gamma_k(G),\gamma_l(G)]\leqslant\gamma_{k+l}(G)\), and so the commutator bracket descends to a Lie bracket on the associated graded group \(\graded G=\bigoplus_{k\geqslant1}\graded_kG\). These constructions give rise to functors \(\gamma_k:\Grp\to\Grp\) and \(\graded:\Grp\to\mathsf{grLie}_{\mathbb{Z}}\), where \(\mathsf{grLie}_{\mathbb{Z}}\) is the category of graded Lie algebras over \(\mathbb{Z}\).

Hence for an \(\FI\)-group\footnote[2]{\emph{I.e.} an \(\FI\)-object in the category \(\Grp\).} \(G:\FI\to\mathsf{Grp}\), we obtain a filtration \(\gamma_k(G)\) of \(G\) by sub-\(\FI\)-groups, and a graded \(\FI\)-Lie algebra \(\graded G\) consisting of \(\FI\)-modules \(\graded_kG\).

\begin{defn}
Let \(\free:\FI\to\Grp\) be the restriction to \(\FI\) of the free group functor \(\mathsf{Set}\to\Grp\).
\end{defn}
Thus \(\free_n\) is the free group generated by the set \(\bld{n}\), although we will denote its generators by \(x_1,\ldots,x_n\) to avoid confusion.

\begin{lemma}\label{graded_pieces}
Each \(\graded_k\free\) is a finitely generated \(\FI\)-module taking values in free abelian groups.
\end{lemma}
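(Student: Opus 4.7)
The plan is to reduce everything to the classical Magnus--Witt theorem, which identifies the associated graded Lie algebra $\graded \free_n$ with the free Lie algebra over $\mathbb{Z}$ on the generators $x_1, \ldots, x_n$. From this identification, each $\graded_k \free_n$ is immediately seen to be a free abelian group of finite rank (computed by Witt's formula as $\frac{1}{k}\sum_{d\mid k}\mu(d)n^{k/d}$), which disposes of the ``values in free abelian groups'' half of the claim.

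For finite generation as an $\FI$-module, the aim is to show that $\graded_k \free$ is generated in degrees $\leqslant k$. Any element of $\graded_k \free_n$ is represented by a $\mathbb{Z}$-linear combination of $k$-fold iterated commutators in the $x_i$'s, and each such commutator involves at most $k$ distinct generators. If exactly $m\leqslant k$ of them appear, with indices $j_1<j_2<\cdots<j_m$, let $\iota\colon\bld{m}\hookrightarrow\bld{n}$ be the injection $l\mapsto j_l$; then this commutator lies in the image of $\iota_{\ast}\colon\graded_k\free_m\to\graded_k\free_n$ applied to the corresponding commutator word in $\graded_k\free_m$. Hence every element of $\graded_k\free_n$ lies in the sub-$\FI$-module generated by $\bigcup_{m\leqslant k}\graded_k\free_m$. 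Picking a finite $\mathbb{Z}$-basis for each of the finitely many (and finite-rank) groups $\graded_k\free_m$, $0\leqslant m\leqslant k$, produces a finite set of generators and, correspondingly, a surjection $\bigoplus_i P^{m_i}\twoheadrightarrow\graded_k\free$ from a finite direct sum of principal projectives.

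The main point to be careful about, rather than a genuine obstacle, is that $\FI$-morphisms are injections, so a commutator in which a generator $x_j$ appears more than once cannot be pushed down to $\bld{k}$ along an injection --- it must instead be factored through $\bld{m}$, where $m$ is the number of \emph{distinct} generators that actually occur. Once this is observed, the argument is essentially bookkeeping on top of the Magnus--Witt description of $\graded\free_n$.
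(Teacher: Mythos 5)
Your proof is correct and takes essentially the same approach as the paper: both establish generation in degrees $\leqslant k$ by observing that a weight-$k$ iterated bracket involves at most $k$ distinct generators and is therefore a pushforward along an injection $\bld{m}\hookrightarrow\bld{n}$ with $m\leqslant k$. The paper packages this observation as an induction on $k$ via a pushout of indexing injections, whereas you derive it directly from the Magnus--Witt description of $\graded\free_n$ as the free Lie algebra; this is a difference of presentation rather than substance.
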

\begin{proof}
The fact that \((\graded_k\free)_n=\gamma_k(\free_n)/\gamma_{k+1}(\free_n)\) is a finitely generated free abelian group is standard --- see e.g. \cite{ser06}.

We prove by induction on \(k\) that \(\graded_k\free\) is generated in degrees \(\leqslant k\). The case \(k=1\) is clear: \(\graded_1\free\cong P^1\) is the abelianisation of \(\free\), and therefore generated by the image of the generator \(x_1\in \free_1\cong\mathbb{Z}\).

Assume the claim has been established for all \(k<k_0\). The graded Lie algebra \(\graded \free_n\) is generated by \(\graded_1\free_n\), so \(\graded_{k_0}\free_n\) is spanned over \(\mathbb{Z}\) by elements of the form \([x,y]\) where \(x\in\graded_k\free_n\) and \(y\in\graded_{k_0-k}\free_n\) for some \(0<k<k_0\). If \(n\geqslant k_0\), then by the induction hypothesis,
\begin{align*}
x&=\sum_{f:\bld{k}\to\bld{n}}f_{\ast}(x_f)\\
y&=\sum_{g:\bld{k_0-k}\to\bld{n}}g_{\ast}(y_g)\\
\end{align*}
for some \(x_f\in\graded_k\free_k\) and \(y_g\in\graded_{k_0-k}\free_{k_0-k}\). For every pair \((f:\bld{k}\to\bld{n},g:\bld{k_0-k}\to\bld{n})\), form the pushout \(P_{f,g}\in\FI\) of the pullback of \(f\) and \(g\) to obtain the commutative diagram
\begin{equation*}
\begin{tikzcd}
& \bld{k} \ar[d,"i_{f,g}"] \ar[rdd,"f",bend left]\\
\bld{k_0-k} \ar[r,"j_{f,g}"'] \ar[rrd,"g"',bend right] & P_{f,g} \ar[rd,"f+g"']\\
&& \bld{n}
\end{tikzcd}
\end{equation*}
Clearly \(n(f,g)\coloneqq\vert P_{f,g}\vert\leqslant k_0\). We also have
\begin{align*}
[x,y]&=\sum_{\substack{f:\bld{k}\to\bld{n}\\g:\bld{k_0-k}\to\bld{n}}}[f_{\ast}(x_f),g_{\ast}(y_g)]\\
&=\sum_{\substack{f:\bld{k}\to\bld{n}\\g:\bld{k_0-k}\to\bld{n}}}(f+g)_{\ast}[(i_{f,g})_{\ast}(x_f),(j_{f,g})_{\ast}(y_g)],
\end{align*}
where each bracket in the latter sum is an element of \(\graded_{k_0}\free_{n(f,g)}\). Hence \(\graded_{k_0}\free\) is generated in degrees \(\leqslant k_0\).
\end{proof}

\paragraph{Free nilpotent groups}
Write \(N_n(k)\coloneqq\free_n/\gamma_{k+1}(\free_n)\) for \(k\geqslant1\). This is the free nilpotent group on \(n\) generators of nilpotency class \(k\). In particular, \(N_n(1)\cong\mathbb{Z}^n\); \(N_2(2)\) is the so-called Heisenberg group. We will denote the generators of \(N_n(k)\) coming from the generators \(x_1,\ldots,x_n\in\free_n\) by the same letters.

\(N_n(k)\) for a fixed \(k\) and varying \(n\) fit into the \(\FI\)-group \(N(k)\coloneqq\free/\gamma_{k+1}(\free)\). There are extensions of \(\FI\)-groups
\[0\to\graded_k\free\to N(k)\to N(k-1)\to1\]
for all \(k>1\). For every \(i<k\), there is a canonical isomorphism \(\gamma_i(\free)/\gamma_{k+1}(\free)\cong\gamma_i(N(k))\) induced by the quotient morphism \(\free\to N(k)\).

\paragraph{Automorphism groups}
Consider now the groups \(\Aut N_n(k)\) of automorphisms of \(N_n(k)\). For a fixed \(k\), these also form an \(\FI\)-group as follows: given \(f:\bld{n}\hookrightarrow\bld{m}\) and \(\phi\in\Aut N_n(k)\), \(f_{\ast}(\phi):N_m(k)\to N_m(k)\) is defined by
\[f_{\ast}(\phi)(x_i)=
\begin{cases}
x_i & \mbox{if } i\not\in\operatorname{im}f,\\
N(k)(f)\circ\phi(x_j) & \mbox{if } i=f(j), j\in\bld{n}.
\end{cases}\]
It is an easy check that \(f_{\ast}(\phi)\) is an automorphism (indeed, \(f_{\ast}(\phi^{-1})\) is its inverse) and that the assignment \(\phi\mapsto f_{\ast}(\phi):\Aut N_n(k)\to\Aut N_m(k)\) is a homomorphism.

There are morphisms of \(\FI\)-groups \(\rho(k):\Aut N(k)\to\Aut N(k-1)\) defined as follows: an automorphism \(\phi\in\Aut N_n(k)\) preserves \(\gamma_k(N_n(k))=\gamma_k(\free_n)/\gamma_{k+1}(\free_n)\) and therefore descends to an endomorphism \(\phi'\) of \(N_n(k-1)\). But \(\phi^{-1}\) descends to the inverse of \(\phi'\), so \(\phi'\in\Aut N_n(k-1)\). The assignment \(\phi\mapsto\phi'\) then defines the map \(\rho(k)\) of \(\FI\)-groups.

\paragraph{The kernel of \(\rho(k)\)} Suppose that \(\phi\in\Aut N_n(k)\) lies in the kernel of \(\rho_n(k)\). Then the set map \(N_n(k)\to\gamma_k(N_n(k))\cong\graded_k\free_n\) sending \(x\) to \(\phi(x)x^{-1}\) is in fact a homomorphism since \(\gamma_k(N_n(k))\) is central in \(N_n(k)\). This map descends to a homomorphism \(\phi':\mathbb{Z}^n\to\graded_k\free_n\), where \(\mathbb{Z}^n\) is the abelianisation of \(N_n(k)\). We obtain a homomorphism
\begin{align*}
\psi_n(k):\ker\rho_n(k)&\to\Hom(\mathbb{Z}^n,\graded_k\free_n)\\
\phi&\mapsto\phi'
\end{align*}
It is well known that \(\psi_n(k)\) is an isomorphism of abelian groups.

Let \(\mathbb{Z}^{\bullet}:\FI^{op}\to\Ab\) be the \(\FI^{op}\)-module taking \(\bld{n}\) to \(\mathbb{Z}^n\) and \(f:\bld{n}\hookrightarrow\bld{m}\) to the projection
\begin{align*}
f^{\ast}:\mathbb{Z}^m&\to\mathbb{Z}^n\\
x_i&\mapsto
\begin{cases}
0 & \mbox{if } i\notin\im f\\
x_j & \mbox{if } i=f(j)
\end{cases}
\end{align*}
Observe that the dual \(\FI\)-module \(\left(\mathbb{Z}^{\bullet}\right)^{\ast}\) is canonically isomorphic to \(\graded_1\free\). Let \(K(k)\) be the \(\FI\)-module given by
\[K(k):\FI\xrightarrow{\left(\mathbb{Z}^{\bullet}\right)^{op}\times\graded_k\free}\Ab^{op}\times\Ab\xrightarrow{\Hom(-,-)}\Ab.\]
We leave it to the reader to verify that the \(\psi_n(k)\) are components of a natural isomorphism \(\psi(k):\ker\rho(k)\to K(k)\).

\begin{lemma}\label{kerrho_lemma}
For every \(k\geqslant1\), \(\ker\rho(k)\) is a finitely generated \(\FI\)-module taking values in free abelian groups.
\end{lemma}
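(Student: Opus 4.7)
The plan is to identify the $\FI$-module $K(k)=\Hom(\mathbb{Z}^{\bullet},\graded_k\free)$ with a degreewise tensor product of two $\FI$-modules that are already known to be finitely generated, so that finite generation follows from Corollary~\ref{tensor_products_fg}. Concretely, since $\mathbb{Z}^n$ is a finitely generated free abelian group, the canonical homomorphism
\[\graded_k\free_n\otimes_{\mathbb{Z}}\Hom(\mathbb{Z}^n,\mathbb{Z})\longrightarrow\Hom(\mathbb{Z}^n,\graded_k\free_n),\qquad v\otimes\lambda\longmapsto\bigl(w\mapsto\lambda(w)\cdot v\bigr),\]
is an isomorphism, natural in both slots. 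Assembled over all $n$, and combined with the paper's identification $(\mathbb{Z}^{\bullet})^{\ast}\cong\graded_1\free$, this produces an isomorphism $K(k)\cong\graded_1\free\otimes\graded_k\free$ of $\FI$-modules, where the right-hand side is the degreewise tensor product.

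Granting this identification, the conclusion is immediate. By Lemma~\ref{graded_pieces}, both $\graded_1\free$ and $\graded_k\free$ are finitely generated $\FI$-modules taking values in free abelian groups. Corollary~\ref{tensor_products_fg} then yields finite generation of the tensor product, and its value at $\bld{n}$, namely $\mathbb{Z}^n\otimes_{\mathbb{Z}}\graded_k\free_n$, is a tensor product of two finitely generated free abelian groups, hence itself free abelian. Transporting along the natural isomorphism $\psi(k):\ker\rho(k)\to K(k)$ recorded earlier in the section transfers both properties to $\ker\rho(k)$.

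The only non-routine step is verifying that the Hom-tensor identification above respects the $\FI$-module structures. For $f:\bld{n}\hookrightarrow\bld{m}$, the action on $K(k)$ sends $\phi$ to $(\graded_k\free)(f)\circ\phi\circ f^{\ast}$, and a short pure-tensor calculation shows this matches the degreewise tensor product of the two individual $\FI$-actions (on $\graded_1\free$, which uses the covariant lift of $f$ on the dual basis, and on $\graded_k\free$). This compatibility is really the only bookkeeping obstacle; it is entirely routine once one tracks both the covariant target slot and the contravariant source slot of $\Hom$ through the isomorphism.
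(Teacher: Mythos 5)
Your argument is essentially identical to the paper's: both identify $\ker\rho(k)\cong K(k)$ with the degreewise tensor product $\graded_1\free\otimes\graded_k\free$ via the natural isomorphism $\Hom(A,B)\cong A^{\ast}\otimes B$ for $A$ finitely generated free, and then invoke Lemma~\ref{graded_pieces} together with Corollary~\ref{tensor_products_fg}. The only difference is that you spell out the naturality/\(\FI\)-equivariance check, which the paper leaves implicit.
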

\begin{proof}
Since \(\ker\rho(k)\cong K(k)\), it is enough to prove the assertion for \(K(k)\).

Recall that there is a natural isomorphism \(\Hom(A,B)\cong A^{\ast}\otimes B\) for \(A\) a finitely generated free abelian group and \(B\in\Ab\), where \(A^{\ast}\) denotes the dual of \(A\). Hence there are isomorphisms \(K(k)\cong\left(\mathbb{Z}^{\bullet}\right)^{\ast}\otimes\graded_k\free\cong\graded_1\free\otimes\graded_k\free\). But \(\graded_1\free\) and \(\graded_k\free\) are finitely generated and take values in free abelian groups by Lemma~\ref{graded_pieces}. The result follows from Corollary~\ref{tensor_products_fg}.
\end{proof}

\paragraph{Johnson filtration}
There is also a homomorphism \(\Aut\free_n\to\Aut N_n(k)\) for every \(k\) and \(n\). Fix \(n\); then the kernels
\[\IA_n(k)\coloneqq\set{\phi\in\Aut\free_n}{\phi(x)x^{-1}\in\gamma_{k+1}(\free_n)\mbox{ for all }x\in\free_n}\]
of these homomorphisms form a filtration of \(\Aut\free_n\). We record the following theorem, originally due to Andreadakis \cite{and65}, for later use.

\begin{thm}[{\cite[Theorem 1.1]{and65}}]\label{free_johnson_central}
The filtration \(\lbrace\IA_n(k)\rbrace_{k\geqslant1}\) of \(\IA_n(1)\) is central.
\end{thm}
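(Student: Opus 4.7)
The plan is to establish the stronger statement $[\IA_n(k), \IA_n(l)] \subseteq \IA_n(k+l)$ for all $k, l \geqslant 1$; centrality of the filtration $\lbrace\IA_n(k)\rbrace_{k\geqslant 1}$ of $\IA_n(1)$ is the case $k = 1$ of this.

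The key input I would prove first is the following auxiliary lemma, which strengthens the definition of $\IA_n(k)$: \emph{if $\phi \in \IA_n(k)$ and $y \in \gamma_m(\free_n)$, then $\phi(y) y^{-1} \in \gamma_{m+k}(\free_n)$.} The proof is by induction on $m$, the base case $m=1$ being the definition. For the inductive step it suffices to treat $y = [u, v]$ with $u \in \gamma_{m_1}(\free_n)$, $v \in \gamma_{m_2}(\free_n)$ and $m_1 + m_2 = m$. Writing $\phi(u) = u u'$ and $\phi(v) = v v'$ with $u' \in \gamma_{m_1+k}(\free_n)$ and $v' \in \gamma_{m_2+k}(\free_n)$ by the inductive hypothesis, one expands $\phi([u,v]) = [uu', vv']$ via the commutator identities $[ab,c] = {}^a[b,c] \cdot [a,c]$ and $[a,bc] = [a,b] \cdot {}^b[a,c]$. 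The conjugations act trivially on $\gamma_{m+k}(\free_n)/\gamma_{m+k+1}(\free_n)$ (this quotient being central in $\free_n/\gamma_{m+k+1}(\free_n)$), and the term $[u', v']$ sits in $\gamma_{m+2k}(\free_n) \subseteq \gamma_{m+k+1}(\free_n)$; what remains is $\phi([u,v]) \equiv [u,v] \cdot [u',v] \cdot [u,v'] \pmod{\gamma_{m+k+1}(\free_n)}$, with the last two factors lying in $\gamma_{m+k}(\free_n)$, as required.

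Given the lemma, the main statement is a short calculation in the nilpotent quotient $N \coloneqq \free_n/\gamma_{k+l+1}(\free_n)$. Since $\IA_n(k+l)$ is precisely the kernel of $\Aut\free_n \to \Aut N$, it suffices to show that the induced automorphisms $\bar\phi, \bar\psi$ of $N$ commute, for any $\phi \in \IA_n(k)$ and $\psi \in \IA_n(l)$. Fix $x \in N$ and set $a \coloneqq \bar\phi(x) x^{-1} \in \gamma_{k+1}(N)$ and $b \coloneqq \bar\psi(x) x^{-1} \in \gamma_{l+1}(N)$. The auxiliary lemma applied to $\psi$ gives $\bar\psi(a) a^{-1} \in \gamma_{k+l+1}(N) = 1$, so $\bar\psi(a) = a$; symmetrically $\bar\phi(b) = b$. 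Hence $\bar\phi \bar\psi(x) = \bar\phi(x)\bar\phi(b) = xab$ and $\bar\psi \bar\phi(x) = \bar\psi(x)\bar\psi(a) = xba$. Finally $[a, b] \in [\gamma_{k+1}(N), \gamma_{l+1}(N)] \subseteq \gamma_{k+l+2}(N) = 1$, so $ab = ba$ and $\bar\phi\bar\psi(x) = \bar\psi\bar\phi(x)$. Since $x$ was arbitrary, $[\bar\phi, \bar\psi] = \id_N$, equivalently $[\phi, \psi] \in \IA_n(k+l)$.

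The only step requiring nontrivial computation is the auxiliary lemma; once it is in hand, the main argument reduces to the observation that in the top nilpotency degree of $N$ the correction terms $a$ and $b$ are each fixed by the other automorphism and commute with each other, forcing $\bar\phi$ and $\bar\psi$ to commute in $\Aut N$. An equivalent structural packaging is that the assignment $\phi \mapsto \delta_\phi$, where $\delta_\phi(y)$ is the class of $\phi(y)y^{-1}$ in $\graded_{m+k}\free_n$ for $y \in \gamma_m(\free_n)$, identifies $\IA_n(k)/\IA_n(k+1)$ with a subspace of the degree-$k$ Lie derivations of $\graded\free_n$, turning the group commutator into the Lie bracket of derivations and making the ``degrees add'' property transparent.
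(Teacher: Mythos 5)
The paper itself does not prove this statement: it is quoted directly from Andreadakis \cite[Theorem~1.1]{and65}, so there is no in-paper argument to compare yours against. Your proof is, however, essentially the classical argument of Andreadakis (he in fact proves the stronger inclusion \([\IA_n(k),\IA_n(l)]\subseteq\IA_n(k+l)\) that you target), and it is correct as a route to the result, modulo two small points worth tightening.

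First, in the auxiliary lemma you assert that ``for the inductive step it suffices to treat \(y=[u,v]\).'' This is true but needs a word of justification: the set \(\{y\in\free_n : \phi(y)y^{-1}\in\gamma_{m+k}(\free_n)\}\) is a subgroup of \(\free_n\) (since if \(\phi(y_i)=a_iy_i\) with \(a_i\in\gamma_{m+k}\) then \(\phi(y_1y_2)(y_1y_2)^{-1}=a_1\,(y_1a_2y_1^{-1})\in\gamma_{m+k}\) by normality, and similarly for inverses). Only with this observation in hand is it legitimate to check the statement on the commutator generators of \(\gamma_m=[\gamma_1,\gamma_{m-1}]\); as written you are silently using it. Once noted, your expansion of \(\phi([u,v])=[uu',vv']\) is fine: the conjugations shift terms only by elements of \(\gamma_{m+k+1}\), the cross-terms \([u',v]\) and \([u,v']\) land in \(\gamma_{m+k}\), and \([u',v']\in\gamma_{m+2k}\subseteq\gamma_{m+k+1}\).

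Second, there is a harmless arithmetic slip in the main step: having set \(a=\bar\phi(x)x^{-1}\) and \(b=\bar\psi(x)x^{-1}\), you have \(\bar\phi(x)=ax\) and \(\bar\psi(x)=bx\), so \(\bar\phi\bar\psi(x)=\bar\phi(b)\bar\phi(x)=b\,a\,x\) and \(\bar\psi\bar\phi(x)=a\,b\,x\), not \(xab\) and \(xba\). The conclusion---commutativity of \(\bar\phi\) and \(\bar\psi\) reduces to \(ab=ba\), which holds because \([a,b]\in[\gamma_{k+1}(N),\gamma_{l+1}(N)]\subseteq\gamma_{k+l+2}(N)=1\)---is unaffected. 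Your closing remark about repackaging \(\phi\mapsto\delta_\phi\) as a Lie-algebra derivation is indeed the cleanest structural way to see why degrees add (this is the ``Johnson homomorphism'' perspective), though of course more machinery than the direct calculation requires.
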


\section{Quotients of mapping class groups}\label{section:quotients of mcgs}
\paragraph{Mapping class groups}
Let \(\surf_{g,n}\) be an oriented surface of genus \(g\) with \(n\) boundary components. Fix collars \(\left[0,\epsilon\right)\times S^1\hookrightarrow\surf_{g,n}\) of the boundary components.

Let \(\Diff^+(\surf_{g,1},\partial)\) be the group of orientation-preserving diffeomorphisms of the surface which fix the collar pointwise. The mapping class group \(\Gamma_{g,1}\) of \(\surf_{g,1}\) is the group of isotopy classes of such diffeomorphisms:
\[\Gamma_{g,1}\coloneqq\Diff^+(\surf_{g,1},\partial)/\Diff_0^+(\surf_{g,1},\partial)\]
where \(\Diff_0^+(\surf_{g,1},\partial)\) is the path-component of the identity in \(\Diff^+(\surf_{g,1},\partial)\).

\begin{figure}
\centering
\def\svgwidth{15.9cm}
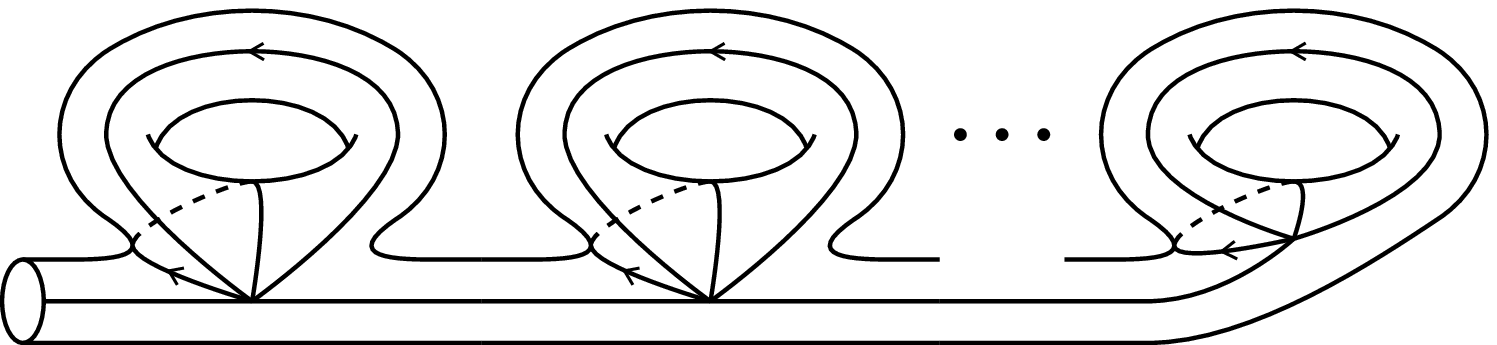
\caption{\(\surf_{g,1}\)}
\label{fig:surface}
\end{figure}

We fix a point \(\ast\in\partial\surf_{g,1}\), and we choose generators \(x_1,y_1,\ldots,x_g,y_g\) of \(\pi\coloneqq\pi_1(\surf_{g,1},\ast)\cong\free_{2g}\) as in Figure~\ref{fig:surface}. Then the boundary word \(\zeta\in\pi\) is equal to
\[\prod_i[x_i,y_i].\]
\(\pi\) admits an action of \(\Gamma_{g,1}\), yielding a homomorphism \(\Gamma_{g,1}\to\Aut\pi\). By the Dehn-Nielsen-Baer theorem, this homomorphism is injective with image precisely the stabiliser of \(\zeta\in\pi\):
\[\Gamma_{g,1}\cong\set{\phi\in\Aut\free_{2g}}{\phi(\zeta)=\zeta}.\]

\paragraph{Johnson filtration}
Just like in the case of \(\Aut\free_n\), we can filter \(\Gamma_{g,1}\) by the kernels \(\tor_{g,1}(k)\) of its action on the quotients \(\free_{2g}/\gamma_{k+1}(\free_{2g})\):
\[\tor_{g,1}(k)=\set{\phi\in\Gamma_{g,1}}{\phi(x)x^{-1}\in\gamma_{k+1}(\free_{2g})\mbox{ for all }x\in\free_{2g}}=\Gamma_{g,1}\cap\IA_{2g}(k).\]
This is the Johnson filtration \(\Gamma_{g,1}=\tor_{g,1}(0)\rhd\tor_{g,1}(1)\rhd\cdots\), defined by D. Johnson in the 80s to study the Torelli group \(\tor_{g,1}=\tor_{g,1}(1)\). We write
\[\Gamma_{g,1}(k)\coloneqq\Gamma_{g,1}/\tor_{g,1}(k)\]
for the quotient. \(\Gamma_{g,1}(k)\) is canonically isomorphic to the image of \(\Gamma_{g,1}\) in \(\Aut N_{2g}(k)\).

In the case \(k=1\), \(\pi/\gamma_2(\pi)\) is naturally isomorphic to \(H_g\coloneqq H_1(\surf_{g,1};\mathbb{Z})\cong\mathbb{Z}^{2g}\). The action of \(\Gamma_{g,1}\) on \(H_g\) preserves the algebraic intersection pairing, so the image of \(\Gamma_{g,1}\) in \(\Aut N_{2g}(1)\cong\GL(H_g)\) lies inside \(\Sp(H_g)\cong\SpZ{g}\). It is well known that this image is in fact the whole of \(\Sp(H_g)\).

\begin{lemma}\label{surf_johnson_central}
The filtration \(\{\tor_{g,1}(k)\}_{k\geqslant1}\) of \(\tor_{g,1}\) is central.
\end{lemma}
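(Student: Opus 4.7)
The claim we want is that $[\tor_{g,1}(1), \tor_{g,1}(k)] \subseteq \tor_{g,1}(k+1)$ for all $k \geqslant 1$, which is the defining property of a central filtration. The plan is to deduce this almost immediately from the corresponding statement for the Andreadakis filtration of $\Aut \free_{2g}$, namely Theorem~\ref{free_johnson_central}.

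The key observation is that by the characterisation just given, $\tor_{g,1}(k) = \Gamma_{g,1} \cap \IA_{2g}(k)$, so $\tor_{g,1}(k)$ is, as a subgroup of $\Aut \free_{2g}$, contained in $\IA_{2g}(k)$. Theorem~\ref{free_johnson_central} tells us that the filtration $\{\IA_{2g}(k)\}_{k\geqslant 1}$ is central, i.e.
\[
[\IA_{2g}(1), \IA_{2g}(k)] \subseteq \IA_{2g}(k+1).
\]
Taking the commutator of $\tor_{g,1}(1) \subseteq \IA_{2g}(1)$ with $\tor_{g,1}(k) \subseteq \IA_{2g}(k)$ inside $\Aut \free_{2g}$, we therefore get
\[
[\tor_{g,1}(1), \tor_{g,1}(k)] \subseteq [\IA_{2g}(1), \IA_{2g}(k)] \subseteq \IA_{2g}(k+1).
\]
On the other hand, $\tor_{g,1}(1)$ and $\tor_{g,1}(k)$ are both subgroups of $\Gamma_{g,1}$, so their commutator also lies in $\Gamma_{g,1}$. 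Combining, we obtain
\[
[\tor_{g,1}(1), \tor_{g,1}(k)] \subseteq \Gamma_{g,1} \cap \IA_{2g}(k+1) = \tor_{g,1}(k+1),
\]
which is exactly what we wanted.

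There is essentially no hard step here: the entire argument reduces to invoking Andreadakis's result and noticing that the Johnson filtration of $\Gamma_{g,1}$ is the restriction of the Andreadakis filtration of $\Aut \free_{2g}$. One could worry about whether the formation of commutators interacts well with intersections, but the inclusions above only need $[A \cap B, A \cap C] \subseteq [B, C]$ and $[A, A] \subseteq A$, both of which are immediate from the definitions.
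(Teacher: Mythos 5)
Your proof is correct and is exactly the argument the paper has in mind: the paper's own proof simply says the result ``follows immediately from Theorem~\ref{free_johnson_central},'' and your write-up spells out the inclusion \(\tor_{g,1}(k)=\Gamma_{g,1}\cap\IA_{2g}(k)\) that makes this immediate. No difference in approach, just a more explicit account of the same one-line deduction.
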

\begin{proof}
This is a classical result (it also follows immediately from Theorem~\ref{free_johnson_central}).
\end{proof}

\paragraph{\(\SI\)-modules from the Johnson filtration}
Suppose \(\phi:\surf_{g,1}\hookrightarrow\surf_{g',1}\) is an orientation-preserving embedding of surfaces. We can extend a given mapping class on \(\surf_{g,1}\cong\im\phi\) by the identity on the complement \(\surf_{g',1}\setminus\im\phi\) to obtain a mapping class on \(\surf_{g',1}\). This construction clearly produces a well-defined homomorphism \(\Gamma_{g,1}\to\Gamma_{g',1}\).
\begin{defn}\label{connecting_homs}
We write \(\phi_{\flat}:\Gamma_{g,1}\to\Gamma_{g',1}\) for the above homomorphism.
\end{defn}

When we picture surfaces \(\surf_{g,1}\) and \(\surf_{g',1}\) with \(g\leqslant g'\) as in Figure~\ref{fig:surface}, there is an obvious embedding \(T:\surf_{g,1}\hookrightarrow\surf_{g',1}\) as the subsurface consisting of the \(g\) rightmost tori. Algebraically, \(T_{\flat}:\Gamma_{g,1}\to\Gamma_{g',1}\) extends a given mapping class on \(\surf_{g,1}\) (thought of as an automorphism of \(\free_{2g}\)) to an automorphism of \(\free_{2g'}\) by the identity on the remaining generators \(x_i, y_i\), \(i>g\). It is easy to see that \(T_{\flat}\) preserves the Johnson filtration. In fact, this is true for any embedding:

\begin{lemma}\label{conn_homs_preserve_johnson}
Let \(\phi:\surf_{g,1}\hookrightarrow\surf_{g',1}\) be an embedding. Then \(\phi_{\flat}\) preserves the Johnson filtration.
\end{lemma}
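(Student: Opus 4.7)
The plan is to fix $f\in\tor_{g,1}(k)$ and show that $\phi_{\flat}(f)$ acts trivially on $\pi'/\gamma_{k+1}(\pi')$, where $\pi'=\pi_1(\surf_{g',1},\ast')$. The essential reduction is the elementary fact that for any automorphism $h$ of a group $G$ and any normal subgroup $N\triangleleft G$, the set
\[
S_h \coloneqq \set{x\in G}{h(x)x^{-1}\in N}
\]
is a subgroup of $G$; this follows from a short calculation using normality of $N$. Consequently it suffices to produce a generating set of $\pi'$ on which $\phi_{\flat}(f)$ acts trivially modulo $\gamma_{k+1}(\pi')$.

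To build such a generating set, I would appeal to a van Kampen decomposition. Since $\phi_{\flat}$ depends only on the isotopy class of $\phi$, I may isotope $\phi$ so that the basepoint $\ast'$ lies in (a thickening of) the complement $W\coloneqq\surf_{g',1}\setminus\phi(\operatorname{int}\surf_{g,1})$, and fix a path $\alpha$ in $W$ from $\ast'$ to $\phi(\ast)$. Applied to the cover $\{\phi(\surf_{g,1}),W\}$ (with appropriate thickenings), van Kampen's theorem then shows that $\pi'$ is generated by loops based at $\ast'$ lying entirely in $W$ together with elements of the form $\alpha\cdot\phi_{\ast}(x)\cdot\alpha^{-1}$, where $x$ ranges over generators of $\pi=\pi_1(\surf_{g,1},\ast)$ and $\phi_{\ast}:\pi\to\pi_1(\phi(\surf_{g,1}),\phi(\ast))\hookrightarrow\pi'$ is the induced homomorphism.

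Let $\tilde{f}$ denote the diffeomorphism of $\surf_{g',1}$ representing $\phi_{\flat}(f)$, namely the extension of a representative of $f$ by the identity on $W$. A loop $\gamma$ lying entirely in $W$ is fixed pointwise by $\tilde{f}$, so $\tilde{f}(\gamma)\gamma^{-1}=1\in\gamma_{k+1}(\pi')$. For a generator $\alpha\cdot\phi_{\ast}(x)\cdot\alpha^{-1}$, the path $\alpha$ lies in $W$ and is therefore fixed by $\tilde{f}$, while the action of $\tilde{f}$ on $\phi_{\ast}(x)$ agrees with $\phi_{\ast}(f_{\ast}(x))$ by construction. Since $f\in\tor_{g,1}(k)$ we have $f_{\ast}(x)x^{-1}\in\gamma_{k+1}(\pi)$, whence
\[
\tilde{f}(\phi_{\ast}(x))\cdot\phi_{\ast}(x)^{-1}=\phi_{\ast}\bigl(f_{\ast}(x)x^{-1}\bigr)\in\phi_{\ast}(\gamma_{k+1}(\pi))\subseteq\gamma_{k+1}(\pi'),
\]
and conjugating by $\alpha$ preserves $\gamma_{k+1}(\pi')$ since the latter is normal. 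Combining this with the previous reduction yields $\phi_{\flat}(f)\in\tor_{g',1}(k)$.

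The only genuinely subtle point, which I expect to be the main obstacle, is arranging the basepoint and the auxiliary path $\alpha$ so that the van Kampen argument goes through cleanly; this boils down to verifying that, after an isotopy of $\phi$, the basepoint $\ast'\in\partial\surf_{g',1}$ and the image $\phi(\ast)$ lie in a common path component of $W$ and can be joined by an arc disjoint from $\operatorname{int}\phi(\surf_{g,1})$. This is a standard surface-topological exercise using compactness of the surfaces, connectedness of $\partial\surf_{g',1}$, and the fact that $\phi$ is orientation-preserving; everything else in the argument is essentially formal manipulation of lower central series.
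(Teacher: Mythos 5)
Your argument is correct, but it takes a genuinely different route from the paper's. The paper observes that after isotoping $\phi$ off a collar of $\partial\surf_{g',1}$, the complement is connected, so by the classification of surfaces the embedding $\phi$ extends to a self-diffeomorphism $\widetilde{\phi}$ of $\surf_{g',1}$ with $\phi = \widetilde{\phi}\circ T$; thus $\phi_\flat$ and the standard $T_\flat$ are conjugate by $[\widetilde{\phi}]$ in $\Gamma_{g',1}$, and since $\tor_{g',1}(k)$ is normal and $T_\flat$ is already seen to preserve the Johnson filtration, so does $\phi_\flat$. You instead work directly from the definition of the Johnson filtration: you pick a generating set of $\pi'$ via a van Kampen decomposition with respect to $\{\phi(\surf_{g,1}), W\}$, note that it suffices to check the defining condition on generators (because $\{x : h(x)x^{-1}\in N\}$ is a subgroup for normal $N$), and then verify the condition separately on the two families of generators. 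Both approaches use the connectedness of the complement $W$; the paper's is shorter granted the prior observation that $T_\flat$ preserves the filtration, while yours is more self-contained and does not need that observation, the classification of surfaces, or normality of $\tor_{g',1}(k)$ in $\Gamma_{g',1}$ (only normality of $\gamma_{k+1}(\pi')$ in $\pi'$, which is automatic). One small point worth making explicit in your ``subtle point'' paragraph: $W$ is connected because $\phi(\partial\surf_{g,1})$ bounds $\phi(\surf_{g,1})$ and is therefore nullhomologous, hence separating, so that $\surf_{g',1}\setminus\phi(\partial\surf_{g,1})$ has exactly two components, one of which is $\phi(\operatorname{int}\surf_{g,1})$.
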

\begin{proof}
First of all observe that \(\phi_{\flat}\) only depends on the isotopy class of \(\phi\), whence we may assume that \(\im\phi\) is disjoint from some collar of \(\partial\surf_{g',1}\) and therefore that \(\surf_{g',1}\setminus\im\phi\) is connected. Let \(S\) be the closure of \(\surf_{g',1}\setminus\im\phi\); by the classification of oriented surfaces, \(S\cong\surf_{g'-g,2}\). Hence we can extend \(\phi\) to a diffeomorphism \(\widetilde{\phi}\) of \(\surf_{g',1}\) such that \(\phi=\widetilde{\phi}\circ T\). But then \(\phi_{\flat}\) and \(T_{\flat}\) are conjugate by \([\widetilde{\phi}]\) in \(\Gamma_{g',1}\).
\end{proof}

Hence \(\phi_{\flat}\) descends to well-defined homomorphisms
\[\phi_{\flat}:\tor_{g,1}(k-1)/\tor_{g,1}(k)\to\tor_{g',1}(k-1)/\tor_{g',1}(k)\]
for each \(k>1\). We call these homomorphisms \(\phi_{\flat}\) as well, since there is no danger of confusion.

\begin{lemma}\label{embedding_lemma}\leavevmode
\begin{enumerate}[label={\normalfont (\roman*)}]
\item\label{realised_by_emb} Every symplectic map \(f:H_g\to H_{g'}\) is induced by an orientation-preserving embedding \(\surf_{g,1}\hookrightarrow\surf_{g',1}\) of surfaces.
\item If two orientation-preserving embeddings \(\phi,\phi':\surf_{g,1}\hookrightarrow\surf_{g',1}\) induce the same map on the first homology group, then there is some \(\psi\in\Diff^+(\surf_{g',1},\partial)\) which represents a class in \(\tor_{g',1}\) and such that \(\phi'\) and \(\psi\circ\phi\) are isotopic.
\end{enumerate}
\end{lemma}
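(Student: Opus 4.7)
For part (i), the natural approach is to reduce to the standard ``rightmost'' embedding \(T:\surf_{g,1}\hookrightarrow\surf_{g',1}\) of Definition~\ref{connecting_homs}, whose induced map on homology is the standard symplectic inclusion \(T_{\ast}:H_g\hookrightarrow H_{g'}\) sending \([x_i]\mapsto[x_i]\), \([y_i]\mapsto[y_i]\). Given an arbitrary symplectic \(f:H_g\to H_{g'}\), Lemma~\ref{l:SI facts}(ii) produces a symplectic complement \(C\cong H_{g'-g}\) to \(\im f\), and by choosing any symplectic isomorphism \(C\cong H_{g'}/\im T_{\ast}\) we may build a symplectic automorphism \(\widetilde{f}\in\Sp(H_{g'})\) with \(\widetilde{f}\circ T_{\ast}=f\). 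Using the already-quoted fact that \(\Gamma_{g',1}\twoheadrightarrow\Sp(H_{g'})\) is surjective, lift \(\widetilde{f}\) to a mapping class \([\Psi]\) with \(\Psi\in\Diff^+(\surf_{g',1},\partial)\); then \(\Psi\circ T:\surf_{g,1}\hookrightarrow\surf_{g',1}\) is an orientation-preserving embedding inducing \(f\) on homology.

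For part (ii), the strategy is to first upgrade the equality on homology to an equality up to diffeomorphism of ambient surface, and then to arrange that the comparison diffeomorphism lies in the Torelli group. After isotoping \(\phi\) and \(\phi'\) off a collar of \(\partial\surf_{g',1}\), the closures of their complements are both orientable surfaces with two boundary circles and Euler characteristic \(\chi(\surf_{g',1})-\chi(\surf_{g,1})\), hence both are diffeomorphic to \(\surf_{g'-g,2}\). By the change-of-coordinates principle for surface subsurfaces one obtains \(\psi_0\in\Diff^+(\surf_{g',1},\partial)\) such that \(\psi_0\circ\phi\) is isotopic to \(\phi'\). Since \((\psi_0)_{\ast}\circ\phi_{\ast}=\phi'_{\ast}=\phi_{\ast}\), the automorphism \((\psi_0)_{\ast}\in\Sp(H_{g'})\) fixes \(\im\phi_{\ast}\) pointwise, and Lemma~\ref{l:SI facts}(iii) yields a splitting \((\psi_0)_{\ast}=\id\oplus s'\) with \(s'\in\Sp(C)\), where \(C\) is the symplectic complement of \(\im\phi_{\ast}\).

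The next step is to cancel \(s'\). Let \(S\) be the closure of \(\surf_{g',1}\setminus\im\phi\); then \(S\cong\surf_{g'-g,2}\), and the inclusion \(S\hookrightarrow\surf_{g',1}\) identifies the symplectic part of \(H_1(S)\) (the quotient by the subgroup generated by the boundary classes) with \(C\). As in the proof that \(\Gamma_{h,1}\twoheadrightarrow\Sp(H_h)\), the mapping class group \(\Gamma(S,\partial S)\) surjects onto \(\Sp(C)\), so we can find \(\psi_1\in\Diff^+(S,\partial S)\) with \((\psi_1)_{\ast}=s'\) on \(C\). Extending \(\psi_1\) by the identity to all of \(\surf_{g',1}\) produces a diffeomorphism \(\widetilde{\psi_1}\in\Diff^+(\surf_{g',1},\partial)\) with \((\widetilde{\psi_1})_{\ast}=\id\oplus s'\) and \(\widetilde{\psi_1}\circ\phi=\phi\) on the nose. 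Setting \(\psi\coloneqq\psi_0\circ\widetilde{\psi_1}^{-1}\), we get \(\psi_{\ast}=\id\) (so \([\psi]\in\tor_{g',1}\)) and \(\psi\circ\phi=\psi_0\circ\phi\simeq\phi'\), which is what was wanted.

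I expect the only subtle step to be the initial reduction in (ii): justifying that \(\phi\) and \(\phi'\) differ by a globally defined \(\psi_0\) (up to isotopy), which requires the classification of orientable surfaces with boundary to identify both complements with \(\surf_{g'-g,2}\), together with the standard fact that any diffeomorphism between two such subsurfaces of \(\surf_{g',1}\) (respecting boundary data) extends to a diffeomorphism of the ambient surface fixing \(\partial\surf_{g',1}\). Everything afterwards is a clean linear-algebraic bookkeeping via Lemma~\ref{l:SI facts}.
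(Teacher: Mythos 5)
For part (i) your argument is essentially identical to the paper's: identify the standard embedding \(T\), use transitivity of \(\Sp(H_{g'})\) on symplectic inclusions \(H_g\hookrightarrow H_{g'}\) (Lemma~\ref{l:SI facts}) to find \(s\) with \(s\circ T_\ast=f\), lift \(s\) to a diffeomorphism via the surjection \(\Gamma_{g',1}\twoheadrightarrow\Sp(H_{g'})\), and compose.

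For part (ii) you and the paper genuinely diverge: the paper simply cites \cite[Lemma~4.1(ii)]{chpu15} and gives no argument, whereas you supply a self-contained proof. Your argument is sound. You first use the change-of-coordinates principle (after pushing the images off a boundary collar and identifying both complements with \(\surf_{g'-g,2}\) via the classification of surfaces) to produce \(\psi_0\in\Diff^+(\surf_{g',1},\partial)\) with \(\psi_0\circ\phi\simeq\phi'\); since \(\phi_\ast=\phi'_\ast\), the class \((\psi_0)_\ast\) fixes \(\im\phi_\ast\) pointwise, hence by Lemma~\ref{l:SI facts}(iii) it has the form \(\id\oplus s'\) on \(\im\phi_\ast\oplus C\). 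You then realise \(s'\) by a diffeomorphism \(\psi_1\) of the complementary subsurface \(S\) supported away from \(\im\phi\) (using the identification \(C\cong H_1(S)/\langle\text{boundary class}\rangle\) from Mayer--Vietoris, and the standard surjection \(\pi_0\Diff^+(S,\partial S)\twoheadrightarrow\Sp(C)\)), extend it by the identity, and take \(\psi=\psi_0\circ\widetilde{\psi_1}^{-1}\), which is Torelli and satisfies \(\psi\circ\phi\simeq\phi'\). The upshot of your route is that it removes the external dependence at the cost of a slightly longer argument; the paper's route is shorter but opaque to a reader who does not have \cite{chpu15} at hand.
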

\begin{proof}\leavevmode
\begin{enumerate}[label={\normalfont (\roman*)}]
\item There is certainly an embedding \(T:\surf_{g,1}\hookrightarrow\surf_{g',1}\) which induces the standard inclusion \(T_{\ast}:H_g\to H_{g'}\) that sends \([x_i]\) to \([x_i]\) and \([y_i]\) to \([y_i]\). But by Lemma~\ref{l:SI facts}, \(\Sp(H_{g'})\) acts transitively on the space of symplectic maps \(H_g\to H_{g'}\), so there is some \(s\in\Sp(H_{g'})\) such that \(s\circ T_{\ast}=f\). If \(\phi\in\Diff^+(\surf_{g',1},\partial)\) represents a lift of \(s\) to \(\Gamma_{g',1}\), then clearly \(\phi\circ T:\surf_{g,1}\hookrightarrow\surf_{g',1}\) is an embedding and \(\left(\phi\circ T\right)_{\ast}=f\).
\item This is the content of \cite[Lemma 4.1(ii)]{chpu15}.
\end{enumerate}
\end{proof}

Write \(Q_g(k)\coloneqq\tor_{g,1}(k-1)/\tor_{g,1}(k)\). We show that these groups fit into an \(\SI\)-module \(Q(k)\).

\begin{rmk}
Observe that the objects \(V_g\) of \(\SI\) are isomorphic (as symplectic spaces) to the \(H_g\). In the rest of this section, we assume the \(H_g\) \emph{are} the objects of \(\SI\), since it makes the construction of \(Q(k)\) more natural.
\end{rmk}

\begin{prop}
For every \(k>1\), there is an \(\SI\)-module \(Q(k)\) which takes \(H_g\) to \(Q_g(k)\) and \(f:H_g\to H_{g'}\) to \(\phi_{\flat}:Q_g(k)\to Q_{g'}(k)\), where \(\phi\) is as in Lemma~\ref{embedding_lemma}\ref{realised_by_emb}.\end{prop}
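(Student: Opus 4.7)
The plan is to use Lemma~\ref{embedding_lemma} to transfer the homomorphisms $\phi_\flat$ from geometry to the symplectic side, with the centrality of the Johnson filtration (Lemma~\ref{surf_johnson_central}) handling the choices that have to be made. First observe that for $k>1$ each $Q_g(k)$ is abelian: by centrality, $[\tor_{g,1}(k-1),\tor_{g,1}(k-1)]\subseteq \tor_{g,1}(2(k-1))\subseteq\tor_{g,1}(k)$, so the quotient $Q_g(k)$ is indeed an object of $\Ab$.

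On objects set $Q(k)(H_g)\coloneqq Q_g(k)$. For a symplectic map $f\colon H_g\to H_{g'}$, Lemma~\ref{embedding_lemma}\ref{realised_by_emb} produces an orientation-preserving embedding $\phi\colon\surf_{g,1}\hookrightarrow\surf_{g',1}$ with $\phi_\ast=f$. Lemma~\ref{conn_homs_preserve_johnson} then shows that $\phi_\flat\colon\Gamma_{g,1}\to\Gamma_{g',1}$ preserves the Johnson filtration, so it descends to a homomorphism $Q_g(k)\to Q_{g'}(k)$ which we would define to be $Q(k)(f)$.

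The bulk of the work is to prove that this is independent of the chosen lift $\phi$. Suppose $\phi,\phi'$ both realise $f$; by Lemma~\ref{embedding_lemma}(ii), there is some $\psi\in\Diff^+(\surf_{g',1},\partial)$ whose class $[\psi]$ lies in $\tor_{g',1}=\tor_{g',1}(1)$ such that $\phi'$ and $\psi\circ\phi$ are isotopic. A direct computation (extend by identity off the image and conjugate) then gives
\[\phi'_\flat(\alpha)=[\psi]\cdot\phi_\flat(\alpha)\cdot[\psi]^{-1}\qquad\text{for all }\alpha\in\Gamma_{g,1}.\]
For $\alpha\in\tor_{g,1}(k-1)$, Lemma~\ref{conn_homs_preserve_johnson} gives $\phi_\flat(\alpha)\in\tor_{g',1}(k-1)$, and by the centrality of the Johnson filtration (Lemma~\ref{surf_johnson_central}),
\[[[\psi],\phi_\flat(\alpha)]\in[\tor_{g',1}(1),\tor_{g',1}(k-1)]\subseteq\tor_{g',1}(k).\]
Hence $\phi_\flat(\alpha)$ and $\phi'_\flat(\alpha)$ represent the same class in $Q_{g'}(k)$, and $Q(k)(f)$ is well-defined.

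The remaining bookkeeping is routine. Functoriality: given $f\colon H_g\to H_{g'}$ and $f'\colon H_{g'}\to H_{g''}$ realised by $\phi$ and $\phi'$, the composite $\phi'\circ\phi$ realises $f'\circ f$ and satisfies $(\phi'\circ\phi)_\flat=\phi'_\flat\circ\phi_\flat$, while the identity of $H_g$ is realised by the identity embedding and hence sent to the identity homomorphism on $Q_g(k)$. The main obstacle is the well-definedness step above; once the centrality of the Johnson filtration is brought in, everything else follows by the usual formal manipulations.
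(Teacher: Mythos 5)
Your proof is correct and follows essentially the same route as the paper: define $Q(k)$ on objects, use Lemma~\ref{embedding_lemma}(i) to realise a symplectic map by an embedding, use Lemma~\ref{conn_homs_preserve_johnson} to get the induced map on the graded piece, and then invoke Lemma~\ref{embedding_lemma}(ii) together with the centrality of the Johnson filtration (Lemma~\ref{surf_johnson_central}) to show the result is independent of the chosen embedding. Your extra observation that $Q_g(k)$ is abelian for $k>1$ (so that $Q(k)$ really lands in $\Ab$), and the explicit conjugation formula $\phi'_\flat(\alpha)=[\psi]\,\phi_\flat(\alpha)\,[\psi]^{-1}$ with the inclusion $[\tor_{g',1}(1),\tor_{g',1}(k-1)]\subseteq\tor_{g',1}(k)$, simply spell out what the paper leaves implicit.
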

\begin{proof}
We define \(Q(k)\) as follows: \(Q(k)(H_g)=Q_g(k)\). Given a symplectic map \(f:H_g\to H_{g'}\), let \(\phi:\surf_{g,1}\hookrightarrow\surf_{g',1}\) be an embedding with \(\phi_{\ast}=f\). By Lemma~\ref{conn_homs_preserve_johnson}, the induced homomorphism \(\phi_{\flat}\) gives rise to a homomorphism \(Q_g(k)\to Q_{g'}(k)\). We claim that this induced map is independent of the choice of \(\phi\). Indeed, suppose that \(\phi':\surf_{g,1}\hookrightarrow\surf_{g',1}\) is another embedding with \(\phi'_{\ast}=f\). Let \(\psi\) be as in Lemma~\ref{embedding_lemma}. Then \(\phi'_{\flat}=\left(\psi\circ\phi\right)_{\flat}\), \emph{i.e.} \(\phi_{\flat}\) and \(\phi'_{\flat}\) are conjugate by \([\psi]\) in \(\Gamma_{g',1}\). But \([\psi]\in\tor_{g',1}\) and the Johnson filtration is central in \(\tor_{g',1}\), so \(\phi_{\flat}\) and \(\phi'_{\flat}\) induce the same homomorphism \(Q_g(k)\to Q_{g'}(k)\). Hence \(Q(k)\) is well defined. It is easy to check that \(Q(k)\) is a functor.
\end{proof}

Recall the functor \(X:\FI\to\SI\) from Definition~\ref{defn:SI-Mod to FI-Mod}: it sends \(\bld{n}\) to \(H_n\) and \(f:\bld{n}\to\bld{m}\) to the symplectic map
\begin{align*}
Xf:H_n&\to H_m\\
[x_i]&\mapsto[x_{f(i)}]\\
[y_i]&\mapsto[y_{f(i)}]
\end{align*}

\begin{thm}\label{Qk_finite_degree}
For every \(k>1\), \(X^{\ast}Q(k)\) is a finitely generated \(\FI\)-module. It takes values in free abelian groups.
\end{thm}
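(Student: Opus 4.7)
The plan is to realise $X^{\ast}Q(k)$ as a sub-\(\FI\)-module of a finitely generated \(\FI\)-module and then invoke the noetherian property of \(\lmod{\FI}\) (Theorem~\ref{thm:FI noetherian}). First, I would observe that since \(\tor_{n,1}(j)=\Gamma_{n,1}\cap\IA_{2n}(j)\) for every \(j\), the inclusion \(\Gamma_{n,1}\hookrightarrow\Aut\free_{2n}\) descends to a natural injection
\[Q_n(k)=\tor_{n,1}(k-1)/\tor_{n,1}(k)\hookrightarrow\IA_{2n}(k-1)/\IA_{2n}(k)=\ker\rho_{2n}(k).\]
Identifying the \(2n\) free generators \(x_1,y_1,\ldots,x_n,y_n\) of \(\pi_1(\surf_{n,1})\) with \(\bld{n}\sqcup\bld{n}\) (say via \(x_i\leftrightarrow i\), \(y_i\leftrightarrow n+i\)), the pullback \(D^{\ast}\ker\rho(k)\) along the duplication functor \(D:\FI\to\FI\) of Lemma~\ref{duplication_fg} sends \(\bld{n}\) to \(\ker\rho_{2n}(k)\).

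The key step is to verify that these level-wise inclusions assemble into a morphism of \(\FI\)-modules \(X^{\ast}Q(k)\hookrightarrow D^{\ast}\ker\rho(k)\). Given \(f:\bld{n}\to\bld{m}\), the symplectic map \(Xf:H_n\to H_m\) takes \([x_i]\mapsto[x_{f(i)}]\), \([y_i]\mapsto[y_{f(i)}]\), and this coincides at the level of \(H_1\) with the ``obvious'' generator-renaming homomorphism \(\free_{2n}\to\free_{2m}\) encoded by \(Df\). I would show that \(Xf\) can be realised by an embedding \(\phi:\surf_{n,1}\hookrightarrow\surf_{m,1}\) whose induced map on \(\pi_1\) is precisely this renaming: starting from the block embedding \(T\) realising the standard inclusion and composing with a suitable mapping class lifting the requisite element of \(\Sp(H_m)\) gives such a \(\phi\). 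For this \(\phi\), the square
\[\begin{tikzcd}
Q_n(k)\ar[r,"\phi_\flat"]\ar[d,hook] & Q_m(k)\ar[d,hook]\\
\ker\rho_{2n}(k)\ar[r,"(Df)_{\ast}"'] & \ker\rho_{2m}(k)
\end{tikzcd}\]
commutes by direct inspection of the \(\FI\)-actions. Any other choice of embedding realising \(Xf\) differs from \(\phi\) by a Torelli element (Lemma~\ref{embedding_lemma}(ii)), and centrality of the Johnson filtration in \(\tor_{m,1}\) (Lemma~\ref{surf_johnson_central}) guarantees that this ambiguity is invisible in \(Q_m(k)\).

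With the morphism of \(\FI\)-modules in hand, the rest is formal: Lemma~\ref{kerrho_lemma} gives that \(\ker\rho(k)\) is finitely generated with values in free abelian groups, and Lemma~\ref{duplication_fg} then upgrades this to \(D^{\ast}\ker\rho(k)\). Noetherianness of \(\lmod{\FI}\) forces \(X^{\ast}Q(k)\) to be finitely generated, and the free-abelian conclusion follows because subgroups of free abelian groups are free abelian. The main obstacle is the naturality verification in the middle step: one must keep careful track of how the geometric construction \(\phi_\flat\) interacts with the algebraic \(\FI\)-action \((Df)_{\ast}\) on \(\Aut N_{2m}(k)\), and it is precisely at this point that the centrality result for the Johnson filtration pays off by absorbing the ambiguity inherent in choosing an embedding.
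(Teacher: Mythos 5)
Your overall strategy matches the paper's: embed $X^{\ast}Q(k)$ levelwise into $\ker\rho_{2n}(k)$, promote this to a morphism of $\FI$-modules into $D^{\ast}\ker\rho(k)$, and then invoke Lemma~\ref{kerrho_lemma}, Lemma~\ref{duplication_fg} and Theorem~\ref{thm:FI noetherian}. However, there is a genuine gap in the naturality verification, which is the crux of the argument.

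You assert that $Xf$ can be realised by an embedding $\phi:\surf_{n,1}\hookrightarrow\surf_{m,1}$ whose induced map on $\pi_1$ is \emph{exactly} the generator-renaming homomorphism $x_i\mapsto x_{f(i)}$, $y_i\mapsto y_{f(i)}$, so that the square comparing $\phi_\flat$ with $(Df)_{\ast}$ commutes ``by direct inspection.'' This is false in general. An embedding induces on $\pi_1$ an injection landing inside the stabiliser of the boundary situation, and its composite with any mapping class still sends the boundary word $\zeta_n=\prod_i[x_i,y_i]$ of $\surf_{n,1}$ to (a conjugate of) the appropriate separating curve in $\surf_{m,1}$. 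The naive renaming does not do this --- for instance, for a non-trivial permutation $\sigma\in\Aut_{\FI}(\bld{n})$, the automorphism $s\in\Aut\free_{2n}$ with $x_i\mapsto x_{\sigma(i)}$, $y_i\mapsto y_{\sigma(i)}$ sends $\zeta_n$ to $\prod_i[x_{\sigma(i)},y_{\sigma(i)}]\neq\zeta_n$ in the free group, so $s$ is not in (the Dehn--Nielsen--Baer image of) $\Gamma_{n,1}$ and is not $\phi_{\ast}$ for any embedding $\phi$. What you actually know is only that $\phi_{\ast}$ and the renaming $(Df)_{\ast}$ agree modulo $\gamma_2$, i.e.\ they differ by an element of $\IA_{2m}(1)$.

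The subsequent appeal to Lemma~\ref{surf_johnson_central} does not close this gap. That lemma is centrality of the Johnson filtration \emph{inside $\tor_{m,1}$}; here the discrepancy $s^{-1}\phi_{\ast}$ lies in $\IA_{2m}(1)$ but not in $\Gamma_{m,1}$ at all, so you are comparing a conjugation coming from $\Gamma_{m,1}$ with a conjugation by an element of $\Aut\free_{2m}$ that is outside $\Gamma_{m,1}$. The needed input is Theorem~\ref{free_johnson_central} (Andreadakis): the \emph{Andreadakis} filtration $\{\IA_n(k)\}$ is central in $\IA_n(1)$, hence $[s^{-1}\phi_{\ast},\alpha]\in\IA_{2m}(k)$ for all $\alpha\in\IA_{2m}(k-1)$, which shows conjugation by $\phi_{\ast}$ and by the renaming $s$ agree after passage to $\ker\rho_{2m}(k)$. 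The paper also simplifies the bookkeeping by noting it suffices to check naturality against the standard inclusions $\bld{n-1}\hookrightarrow\bld{n}$ (where the renaming \emph{is} realised by the block embedding $T$, so the check is immediate) and against permutations of $\bld{n}$ (where the Andreadakis argument above is applied). Replacing your Lemma~\ref{surf_johnson_central} reference with Theorem~\ref{free_johnson_central} and reorganising along these lines repairs the proof.
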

\begin{proof}
Write \(F\coloneqq X^{\ast}Q(k)\). For every \(n\), \(\tor_{n,1}(k)\) is the kernel of the action of \(\Gamma_{n,1}\) on \(N_{2n}(k)\), so there are injections \(j_n:\Gamma_{n,1}(k)\hookrightarrow\Aut N_{2n}(k)\). \(F_n=\tor_{n,1}(k-1)/\tor_{n,1}(k)\) is mapped into the kernel \(\ker\rho_{2n}(k)\) which is free abelian by Lemma~\ref{kerrho_lemma}. Hence \(F_n\) is also free abelian.

Moreover, we claim that the injections \(j_n:F_n\hookrightarrow\ker\rho_{2n}(k)\) are the components of a morphism \(j:F\hookrightarrow D^{\ast}\ker\rho(k)\) of \(\FI\)-modules, where \(D:\FI\to\FI\) is the duplication functor defined at the end of Section~\ref{subsection:FI modules}. Observe that we only need to check naturality with respect to the inclusions \(\bld{n-1}\hookrightarrow\bld{n}\) and automorphisms of \(\bld{n}\), as these arrows generate all the arrows in \(\FI\).

Naturality with respect to the inclusions \(\bld{n-1}\hookrightarrow\bld{n}\) follows easily from the definitions. Given a permutation \(\sigma\in\Aut_{\FI}(\bld{n})\), let \(\widetilde{\sigma}\in\Gamma_{n,1}\) be a lift of \(X\sigma\in\Aut_{\SI}(H_g)=\Sp(H_g)\). Let also \(s\in\Aut\free_{2n}\) be the obvious lift of \(X\sigma\), namely the automorphism sending \(x_i\mapsto x_{\sigma(i)}\) and \(y_i\mapsto y_{\sigma(i)}\). In general, \(s\) will not be in \(\Gamma_{n,1}\) because it does not preserve the boundary word \(\prod_i[x_i,y_i]\). We want to show the following diagram commutes
\begin{equation*}
\begin{tikzcd}
\tor_{n,1}(k-1)/\tor_{n,1}(k) \ar[r, hook, "j_n"] \ar[d, "\sigma_{\ast}"'] & \ker\rho_{2n}(k) \ar[d, "\sigma_{\ast}"]\\
\tor_{n,1}(k-1)/\tor_{n,1}(k) \ar[r, hook, "j_n"] & \ker\rho_{2n}(k)
\end{tikzcd}
\end{equation*}
where the left vertical arrow is conjugation by \(\widetilde{\sigma}\) while the right arrow is conjugation by \(s\).

Observe that \(s^{-1}\widetilde{\sigma}\in\IA_{2n}(1)\). Indeed, \(\widetilde{\sigma}(z_i)z_{\sigma(i)}^{-1}\in\gamma_{1}(\free_{2n})\), where \(z\) stands for either \(x\) or \(y\). Hence
\[s^{-1}\left(\widetilde{\sigma}(z_i)z_{\sigma(i)}^{-1}\right)=(s^{-1}\widetilde{\sigma})(z_i)z_i^{-1}\in\gamma_1(\free_{2n}),\]
so by Theorem~\ref{free_johnson_central}, \([s^{-1}\widetilde{\sigma},\phi]\in\IA_{2n}(k)\) for every \(\phi\in\tor_{n,1}(k-1)\). Therefore \(s\phi s^{-1}\) and \(\widetilde{\sigma}\phi\widetilde{\sigma}^{-1}\) induce the same automorphism of \(N_{2n}(k)\), which proves the claim.

But \(D^{\ast}\ker\rho(k)\) is finitely generated by Lemma~\ref{duplication_fg} since \(\ker\rho(k)\) is finitely generated. Hence \(F\) is finitely generated by Theorem~\ref{thm:FI noetherian}.
\end{proof}

\begin{rmk}
In particular, \(Q(k)\) has finite degree as an \(\SI\)-module by Lemma~\ref{l:degree of pullback}. We need, however, the stronger statement we made in the theorem, since it will allow us to argue that the exterior powers \(\Lambda^nQ(k)\) have finite degree as well.
\end{rmk}

\section{Homological stability for the groups \(\Gamma_{g,1}(k)\)}\label{section:proof of theorem}
\subsection{A general framework}
We will deduce Theorem~\ref{thm:hom stab} from Theorem~\ref{Qk_finite_degree} and homological stability for the groups \(\SpZ{n}\) using a bootstrapping argument.

\begin{defn}\leavevmode
\begin{enumerate}[label={\normalfont (\roman*)}]
\item A \emph{sequence of groups} is an \(\mathbb{N}\)-object in the category \(\Grp\), i.e. a composable sequence \(G_0\to G_1\to\cdots\) of group homomorphisms. A morphism of a sequence of groups is a natural transformation.
\item A \(G\)-\emph{module} \(M\) for a sequence of groups \(G\) is a composable sequence \(M_0\to M_1\to\cdots\) of abelian group homomorphisms, together with a \(G_n\)-action on \(M_n\) for each \(n\) such that the connecting homomorphisms \(M_{n-1}\to M_n\) are equivariant with respect to \(G_{n-1}\to G_n\).
\item A sequence of groups \(G\) has \emph{homological stability with coefficients} in a \(G\)-module \(M\) if, for every \(i\), the induced homomorphism
\[H_i(G_{n-1};M_{n-1})\to H_i(G_n;M_n)\]
is an isomorphism for all \(n\geqslant n(i)\) for some \(n(i)\geqslant1\).
\end{enumerate}
\end{defn}

\begin{rmk}
The category \(\Grp^{\mathbb{N}}\) of sequences of groups is complete, with limits computed pointwise, and the sequence \(1\to1\to\cdots\) is a null object. In particular, the category has all kernels. Also, a morphism \(f:G\to H\) is monic (resp. epic) if and only if every component \(f_n:G_n\to H_n\) is monic (resp. epic).
\end{rmk}

\begin{ex}[Sequences of groups]
The homomorphisms \(T_{\flat}:\Gamma_{g,1}\to\Gamma_{g',1}\) for \(g\leqslant g'\) defined in the previous section give rise to a sequence of groups
\[\Gamma\coloneqq\left(\Gamma_{0,1}\to\Gamma_{1,1}\to\cdots\right).\]
By Lemma~\ref{conn_homs_preserve_johnson}, \(\Gamma\) is filtered by sequences of groups arising from the Johnson filtration
\[\tor(k)\coloneqq\left(\tor_{0,1}(k)\to\tor_{1,1}(k)\to\cdots\right)\]
and the quotients also fit into sequences of groups
\[\Gamma(k)\coloneqq\left(\Gamma_{0,1}(k)\to\Gamma_{1,1}(k)\to\cdots\right)\]
for each \(k\geqslant1\).
\end{ex}
\begin{ex}[Modules of sequences of groups]\label{ex:seq_modules}
Consider the sequence \(\Gamma(1)\) which consists of the groups \(\Gamma_{n,1}(1)\cong\SpZ{n}\). The connecting homomorphisms \(\SpZ{(n-1)}\to\SpZ{n}\) take a symplectic \(\left(2n-2\right)\times\left(2n-2\right)\) matrix \(A\) to the symplectic \(2n\times2n\) matrix
\begin{equation*}
\left(
\begin{array}{c;{1pt/2pt}c}
A & 0\\
\hdashline[1pt/2pt]
0 & I_2
\end{array}
\right)
\end{equation*}
where \(I_2\) is the \(2\times2\) identity matrix. Another way to think about the connecting homomorphisms is in the context of the category \(\SI\): we have \(\SpZ{n}=\Aut_{\SI}(V_n)\), and the connecting homomorphism is induced by the monoidal product: \(\Aut_{\SI}(V_{n-1})\xrightarrow{(-)\boxtimes V_1}\Aut_{\SI}(V_n)\).

Hence an \(\SI\)-module \(F\) gives rise to a \(\Gamma(1)\)-module
\[F_0\xrightarrow{\left(i_0\right)_{\ast}}F_1\xrightarrow{\left(i_1\right)_{\ast}}\cdots\]
where \(i_n:V_n\hookrightarrow V_{n+1}\) is the inclusion `on the left', \emph{i.e.} the arrow \(V_n\boxtimes\left(V_0\to V_1\right)\). We will abuse notation and write \(F\) for this \(\Gamma(1)\)-module as well.
\end{ex}

\paragraph{Bootstrapping stability}
If \(f:G\to H\) is a morphism of sequences of groups and \(M\) is an \(H\)-module, we obtain a \(G\)-module \(f^{\ast}M\) by restricting the action of \(H_n\) on \(M_n\) along \(f_n:G_n\to H_n\). Suppose we have an extension
\[1\to G'\xrightarrow{\iota} G\to G''\to1\]
of sequences of groups and a \(G\)-module \(M\). Then for every \(i\), the sequence
\[\cdots\to H_i(G'_{n-1};\iota^{\ast}M_{n-1})\to H_i(G'_n;\iota^{\ast}M_n)\to\cdots\]
forms a \(G''\)-module \(H_i(G';\iota^{\ast}M)\), where the module structure comes from the conjugation action of \(G''_n\) on the homology of \(G'_n\). We can deduce from the Lyndon-Hochschild-Serre spectral sequence the following theorem.
\begin{thm}\label{bootstrapping}
Let \(1\to G'\xrightarrow{\iota} G\to G''\to1\) and \(M\) be as above. If for every \(i\), \(G''\) has homological stability with coefficients in \(H_i(G';\iota^{\ast}M)\), then \(G\) has homological stability with coefficients in \(M\).
\end{thm}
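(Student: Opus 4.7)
The plan is to apply the Lyndon--Hochschild--Serre spectral sequence to each extension $1 \to G'_n \to G_n \to G''_n \to 1$ with coefficients in the $G_n$-module $M_n$, obtaining a first-quadrant spectral sequence
$$E^2_{p,q}(n) = H_p(G''_n; H_q(G'_n; \iota^\ast M_n)) \Longrightarrow H_{p+q}(G_n; M_n).$$
Since the extensions and module maps vary compatibly with $n$, naturality of LHS yields a morphism of spectral sequences $E^r_{\bullet,\bullet}(n-1) \to E^r_{\bullet,\bullet}(n)$ converging to $H_\ast(G_{n-1}; M_{n-1}) \to H_\ast(G_n; M_n)$. One should also check that the $G''$-module structure on $H_q(G'; \iota^\ast M)$ alluded to in the statement coincides with the one on $E^2_{p,q}$: this is standard, since the $G_n$-conjugation action on $H_q(G'_n; \iota^\ast M_n)$ descends to $G''_n$ (inner automorphisms acting trivially on group homology).

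The hypothesis that $G''$ has homological stability with coefficients in each $H_q(G'; \iota^\ast M)$ means precisely that, for every fixed $(p,q)$, the map $E^2_{p,q}(n-1) \to E^2_{p,q}(n)$ is an isomorphism for all $n$ above some threshold $n(p,q)$. To conclude stability of the abutment in a fixed homological degree $i$, I would invoke a standard (Zeeman-type) spectral-sequence comparison. Because the spectral sequence is first-quadrant, every entry $E^\infty_{p,q}$ with $p + q = i$ agrees with $E^r_{p,q}$ for all $r$ exceeding an explicit bound, and its value is computed from the finitely many $E^2$-entries $(p',q')$ lying in some bounded region $R(i)$ of the first quadrant (the region reachable by chains of $d_r$-differentials into or out of the $i$-th antidiagonal). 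Setting $N(i)$ to be the maximum of $n(p',q')$ over $(p',q') \in R(i)$, all $E^2$-maps in $R(i)$ become isomorphisms simultaneously for $n \geq N(i)$.

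A straightforward induction on the page $r$ then propagates this: each $E^{r+1}_{p,q}$ is a subquotient of $E^r_{p,q}$ determined by incoming and outgoing $d_r$-differentials, and for $(p,q)$ in a suitably shrunken subregion of $R(i)$ these differentials involve only entries already known to be isomorphisms. In this way one obtains that $E^r_{p,q}(n-1) \to E^r_{p,q}(n)$ is an isomorphism for every $(p,q)$ with $p + q = i$ and every $r$, including $r = \infty$. Since $H_i(G_n; M_n)$ carries a finite filtration with associated graded $\bigoplus_{p+q=i} E^\infty_{p,q}(n)$, a routine five-lemma induction along this filtration yields that $H_i(G_{n-1}; M_{n-1}) \to H_i(G_n; M_n)$ is an isomorphism for $n \geq N(i)$, as required.

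There is no serious obstacle beyond bookkeeping; the main point worth emphasising is that the hypothesis only affords pointwise stability at each $E^2$-entry, with stability bound depending on the bidegree. This is nonetheless sufficient because the spectral sequence is first-quadrant, so for each fixed target degree $i$ only finitely many $(p,q)$ contribute and one can take the maximum of their stability bounds.
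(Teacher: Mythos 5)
Your proof is correct and is the standard argument via the Lyndon--Hochschild--Serre spectral sequence together with a first-quadrant comparison lemma. The paper itself does not spell out a proof but defers to Szymik's article, where this is precisely the argument given: the $E^2$-page is $H_p(G''_n; H_q(G'_n; \iota^\ast M_n))$ with the conjugation action descending to $G''_n$ because inner automorphisms act trivially on group homology; the hypothesis gives, for each fixed $(p,q)$, eventual isomorphisms on $E^2$; first-quadrantness confines the computation of each antidiagonal $p+q=i$ on all pages to a finite region of $E^2$, so taking the maximum of the finitely many thresholds and inducting on pages yields eventual isomorphisms on $E^\infty$; and a five-lemma induction along the finite filtration of the abutment finishes the argument. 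One small terminological quibble: what you invoke is the easy direction of spectral-sequence comparison (isomorphism on $E^2$ entries implies isomorphism on the abutment), rather than the Zeeman comparison theorem, which runs in the opposite, harder direction; this does not affect the correctness. It would also be worth saying explicitly that the connecting homomorphisms $H_q(G'_{n-1};\iota^\ast M_{n-1})\to H_q(G'_n;\iota^\ast M_n)$ are equivariant in the required sense, which is what makes $H_q(G';\iota^\ast M)$ a $G''$-module in the category of sequences, but this is routine naturality.
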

See \emph{e.g.} \cite{szy14} for a proof.

\subsection{Proof of the main result}
First of all recall the following well-known theorem which says that the groups \(\SpZ{n}\) enjoy homological stability with suitable twisted coefficients.

\begin{thm}\label{symp_stab}
If a module \(M\) of the sequence \(\Gamma(1)=\left(1\to\Sp(2,\mathbb{Z})\to\Sp(4,\mathbb{Z})\to\cdots\right)\) comes from an \(\SI\)-module of finite degree as in Example~\ref{ex:seq_modules}, then \(\Gamma(1)\) has homological stability with coefficients in \(M\).
\end{thm}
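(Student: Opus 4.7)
The plan is to deduce the theorem from the general homological stability machinery of Randal--Williams and Wahl \cite{rww15}, by recognising \(\mathsf{SI}\) as an instance of their framework and matching their notion of ``polynomial coefficient system'' with our notion of an \(\mathsf{SI}\)-module of finite degree.

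First I would verify that \(\mathsf{SI}\) is a homogeneous category in the sense of \cite{rww15}. This amounts to checking three pieces of data: \(\mathsf{SI}\) has no zero divisors (the monoidal unit \(V_0\) has trivial automorphism group and no nontrivial summands), the automorphism group \(\operatorname{Sp}(2n, \mathbb{Z})\) acts transitively on morphisms into \(V_n\) with fixed source, and the stabiliser of a morphism \(f : V_k \hookrightarrow V_n\) under this action is the automorphism group of the symplectic complement. All three properties are essentially the content of Lemma~\ref{l:SI facts}.

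Second I would identify the relevant ``splitting complex'' associated to \(\mathsf{SI}\) with the simplicial complex of unimodular isotropic vectors (or, equivalently, hyperbolic pairs) in \(V_n\), and invoke the classical connectivity estimate: this complex is approximately \(\lfloor (n-c)/2 \rfloor\)-connected, due to Charney's work on symplectic homological stability and to Mirzaii--van der Kallen for the sharp bound. Together with the homogeneity verified above, this is exactly what \cite{rww15} call ``standard connectivity''.

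Third I would match up definitions: the \(\mathsf{SI}\)-modules of finite degree introduced in our Section~\ref{section:mods of fin deg} coincide with the ``polynomial coefficient systems of finite degree'' of \cite{rww15}, since both are built by iteratively applying the suspension functor \(\Sigma = (V_1 \boxtimes -)^{\ast}\) and taking kernels and cokernels of the natural transformation \(F \to \Sigma F\). The main theorem of \cite{rww15} then yields homological stability for \(\Gamma(1)\) with the required twisted coefficients. The only real point requiring care is the compatibility of conventions (left versus right stabilisation, and the two possible stabilisation functors \(L = (-) \boxtimes V_1\) and \(R = V_1 \boxtimes (-)\)); these differ only by pre-composition with the braiding, which is available since \(\mathsf{SI}\) is symmetric monoidal, and so do not affect the resulting stability statement. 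The expected obstacle is purely bookkeeping: chasing the difference between our conventions and those of \cite{rww15}, rather than any new mathematical input.
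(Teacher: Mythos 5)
Your proposal is correct and takes the same route as the paper, which simply defers the statement to Randal--Williams and Wahl \cite{rww15}; the three steps you outline (verifying homogeneity of \(\mathsf{SI}\) via Lemma~\ref{l:SI facts}, invoking the connectivity of the complex of hyperbolic pairs, and matching finite-degree \(\mathsf{SI}\)-modules with polynomial coefficient systems) are precisely the details that citation implicitly relies on. The observation that left and right stabilisation agree up to the symmetry of \(\boxtimes\) correctly dispatches the only genuine bookkeeping worry.
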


See \emph{e.g.} \cite{rww15} for a proof.

\begin{defn}\label{def:good_module}
We say a \(\Gamma(1)\)-module is \emph{good} if it satisfies the hypothesis of Theorem~\ref{symp_stab}.

We say a \(\Gamma(k)\)-module, \(k>1\), is \emph{good} if it is the restriction of a good \(\Gamma(1)\)-module along the quotient morphism \(\Gamma(k)\to\Gamma(1)\).
\end{defn}

Now we are ready to prove Theorem~\ref{thm:hom stab}.
\setcounter{letterctr}{0}
\begin{lettered_thm}
For all \(k\geqslant1\), \(\Gamma(k)\) enjoys homological stability with coefficients in every good \(\Gamma(k)\)-module.
\end{lettered_thm}
\begin{proof}
We proceed by induction on \(k\). The case \(k=1\) is precisely the content of Theorem~\ref{symp_stab}.

Suppose the claim has been established for \(\Gamma(k-1)\) for some \(k>1\), and let \(M\) be a good \(\Gamma(k)\)-module. There is an extension of sequences of groups
\[0\to Q(k)\to\Gamma(k)\to\Gamma(k-1)\to1\]
where \(Q(k)=\left(Q_0(k)\to Q_1(k)\to\cdots\right)\) is the obvious subobject of the sequence of groups \(\Gamma(k)\). Since \(Q(k)\) lies in the kernel of \(\Gamma(k)\to\Gamma(1)\) and \(M\) is pulled back along this map, the action of \(Q_n(k)\) on \(M_n\) is trivial for each \(n\). Each \(Q_n(k)\) is a finitely generated free abelian group by Theorem~\ref{Qk_finite_degree}, and so by the Universal Coefficient Theorem,
\[H_i(Q_n(k);M_n)\cong M_n\otimes_{\mathbb{Z}}\Lambda^iQ_n(k)\]
where \(\Gamma_{n,1}(k-1)\) acts diagonally.

Since \(M\) is good, it is induced from some \(\SI\)-module of finite degree which we also call \(M\). Then the \(\Gamma(k-1)\)-module \(H_i(Q(k);M)\) is induced from the \(\SI\)-module \(M\otimes\Lambda^iQ(k)\). Once we show that this \(\SI\)-module is good, we are done: by the induction hypothesis, \(\Gamma(k-1)\) then enjoys homological stability with coefficients in \(H_i(Q(k);M)\) and by Theorem~\ref{bootstrapping}, \(\Gamma(k)\) has homological stability with coefficients in \(M\).

By Lemma~\ref{l:degree of pullback}, it suffices to show that the pulled-back \(\FI\)-module \(X^{\ast}\left(M\otimes\Lambda^iQ(k)\right)\cong\left(X^{\ast}M\right)\otimes\Lambda^i\left(X^{\ast}Q(k)\right)\) has finite degree. \(X^{\ast}Q(k)\) is finitely generated by Theorem~\ref{Qk_finite_degree}, so \(\left(X^{\ast}Q(k)\right)^{\otimes i}\) is also finitely generated by Corollary~\ref{tensor_products_fg}. Quotients of finitely generated \(\FI\)-modules are clearly finitely generated, so \(\Lambda^i\left(X^{\ast}Q(k)\right)\) is finitely generated. Hence by Corollary~\ref{fin_deg_tensors}, \(\left(X^{\ast}M\right)\otimes\Lambda^i\left(X^{\ast}Q(k)\right)\) has finite degree.
\end{proof}

\subsection{Towards a computation of the stable homology}
An obvious question to ask now is: what is the stable homology of the \(\Gamma_{g,1}(k)=\Gamma_{g,1}/\mathcal{I}_{g,1}(k)\) for \(k>1\), at least rationally? In a recent preprint \cite{szy16}, Szymik shows that in the case of automorphism groups of free nilpotent groups (see Section~\ref{section:automs of free nilps}), the canonical homomorphisms \(\Aut N_n^k\to\GL(n,\mathbb{Z})\) are rational homology isomorphisms in the stable range. He does so by showing that for every \(k\), the homology Lyndon-Hochschild-Serre spectral sequence for the extension
\[1\to K_n^k\to\Aut N_n^k\to\GL(n,\mathbb{Z})\to1\]
where \(K_n^k\lhd\Aut N_n^k\) is the kernel, stably collapses on the second page, with all rows except the 0th stably vanishing. Hence the edge homomorphism \(H_{\ast}(\Aut N_n^k;\mathbb{Q})\to H_{\ast}(\GL(n,\mathbb{Z});\mathbb{Q})\) is stably an isomorphism.

It would be tempting to think that an analogous statement might hold in the case of the quotients \(\Gamma_{g,1}(k)\), but this is in fact not true. Indeed, consider the extensions
\begin{equation}\label{second_ext}
1\to Q_g(2)\to\Gamma_{g,1}(2)\to\SpZ{g}\to1.
\end{equation}
By a classical result of Johnson, \(Q_g(2)=\tor_{g,1}/\tor_{g,1}(2)\cong\Lambda^3H_g\). Hence
\[H^{\ast}(Q_g(2);\mathbb{Q})\cong\Lambda^{\ast}\left(\Lambda^3\widehat{H}_g^{\ast}\right)\]
where \(\widehat{H}_g\coloneqq H_g\otimes\mathbb{Q}\) is the standard rational representation of \(\Sp(2g,\mathbb{Q})\) and \(\widehat{H}_g^{\ast}\) denotes its \(\mathbb{Q}\)-linear dual. Hence the \(\SpZ{g}\)-action on \(H^{\ast}(Q_g(2);\mathbb{Q})\) comes from a natural \(\Sp(2g,\mathbb{Q})\)-action. Hence by the complete reducibility of finite-dimensional \(\Sp(2g,\mathbb{Q})\)-representations and by Borel's theorem \cite{bor81} on the vanishing of the cohomology of \(\SpZ{g}\) with coefficients in non-trivial irreducible \(\Sp(2g,\mathbb{Q})\)-representations,
\[H^p(\SpZ{g};H^q(Q_g(2);\mathbb{Q}))\cong H^p(\SpZ{g};\mathbb{Q})\otimes H^q(Q_g(2);\mathbb{Q})^{\SpZ{g}}\]
when \(g\) is large compared to \(p\).

The stable rational cohomology of \(\SpZ{g}\) is the polynomial algebra
\[\mathbb{Q}\left[c_1,c_3,c_5,\ldots\mid\deg(c_i)=2i\right].\]
Kawazumi and Morita \cite{kamo96} show that \(H^{\ast}(Q_g(2);\mathbb{Q})^{\SpZ{g}}\) is stably isomorphic to the polynomial algebra over \(\mathbb{Q}\) generated by the set \(\graph\) of finite non-empty connected trivalent graphs (with loops and multiple edges); the degree of a generator \(\Gamma\in\graph\) is the order of \(\Gamma\).

Let \(E_{\bullet}^{\ast\ast}\) be the Lyndon-Hochschild-Serre spectral sequence in rational cohomology associated to the extension \eqref{second_ext}. By the above, the second page is stably given by
\[E_2^{pq}\cong\mathbb{Q}[c_1,c_3,c_5,\ldots]\otimes\mathbb{Q}[\graph].\]
But this is concentrated in even bidegrees, so the spectral sequence stably collapses on the second page and we obtain the following result.
\begin{prop}
The stable rational cohomology ring of \(\Gamma_{g,1}(2)\) is isomorphic to
\[\mathbb{Q}[\graph\cup\{c_1,c_3,c_5,\ldots\}].\]
\end{prop}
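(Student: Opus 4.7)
The plan is to run the Lyndon--Hochschild--Serre spectral sequence in rational cohomology for the extension \eqref{second_ext} and to show that it stably collapses at the $E_2$ page purely for parity reasons, and moreover that the associated graded can be lifted to the actual cohomology ring.

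The stable identification of the $E_2$ page has essentially been carried out in the paragraphs preceding the proposition: combining complete reducibility of finite-dimensional $\Sp(2g,\mathbb{Q})$-representations with Borel's vanishing theorem yields, in the stable range,
\[E_2^{p,q}\;\cong\;H^p(\SpZ{g};\mathbb{Q})\otimes H^q(Q_g(2);\mathbb{Q})^{\SpZ{g}}\;\cong\;\mathbb{Q}[c_1,c_3,c_5,\ldots]_p\otimes\mathbb{Q}[\graph]_q,\]
and this identification is moreover an isomorphism of bigraded algebras, since both tensor factors arise multiplicatively (the first via pullback along the projection, the second from the fibre-invariants of the cup product). First I would make this multiplicative compatibility explicit.

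The collapse step is the parity observation: the generators $c_i$ sit in even degree $2i$, and the Kawazumi--Morita generators $\Gamma\in\graph$ sit in degree equal to the number of vertices of $\Gamma$, which is always even because a connected trivalent graph satisfies $3v=2e$. Thus $E_2^{p,q}$ vanishes unless both $p$ and $q$ are even, so every nonzero entry lies in even total degree. Since the differential $d_r\colon E_r^{p,q}\to E_r^{p+r,q-r+1}$ shifts total degree by $1$, either its source or its target is zero at each entry, for all $r\geqslant2$. Hence $E_2=E_\infty$ stably.

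It remains to promote the isomorphism on the associated graded to a ring isomorphism on the abutment. The $c_i$ lift to $H^*(\Gamma_{g,1}(2);\mathbb{Q})$ by pulling back along $\Gamma_{g,1}(2)\twoheadrightarrow\SpZ{g}$, while the graph generators sit in the bottom filtration piece $E_\infty^{0,q}\subset H^q(\Gamma_{g,1}(2);\mathbb{Q})$ and so already live in the total cohomology. Let $A\subset H^*(\Gamma_{g,1}(2);\mathbb{Q})$ be the subalgebra they generate. Because all generators are in even degree there are no graded-commutativity signs to worry about, so there is a well-defined surjection $\mathbb{Q}[\graph\cup\{c_1,c_3,\ldots\}]\twoheadrightarrow A$. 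The composition $A\hookrightarrow H^*(\Gamma_{g,1}(2);\mathbb{Q})\twoheadrightarrow E_\infty^{*,*}$ surjects onto the full $E_\infty$, and comparing Hilbert series (finite-dimensional in each degree, matching in each degree between the polynomial algebra and $E_\infty$) forces both the presenting map and the inclusion $A\hookrightarrow H^*(\Gamma_{g,1}(2);\mathbb{Q})$ to be isomorphisms.

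The only genuinely delicate point is the multiplicativity of the $E_2$ identification: one has to verify that the two invariant-theoretic operations---cup product on $H^*(Q_g(2);\mathbb{Q})^{\SpZ{g}}$ and the action of $H^*(\SpZ{g};\mathbb{Q})$ through the edge homomorphism---do combine into the expected tensor-product ring structure. The rest is essentially bookkeeping once the parity observation is in hand.
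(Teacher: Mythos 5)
Your proposal follows the same route as the paper: run the Lyndon--Hochschild--Serre spectral sequence for the extension \eqref{second_ext}, identify $E_2$ stably via Borel vanishing and Kawazumi--Morita, observe even bidegree concentration forces collapse, and read off the ring. You supply two useful details the paper leaves implicit: the $3v=2e$ argument explaining why every $\Gamma\in\graph$ has even degree, and an explicit lifting/Hilbert-series argument promoting the associated-graded isomorphism to a genuine ring isomorphism.

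One small correction in the lifting step: you say the graph generators ``sit in the bottom filtration piece $E_\infty^{0,q}\subset H^q(\Gamma_{g,1}(2);\mathbb{Q})$ and so already live in the total cohomology.'' This has the filtration oriented the wrong way. In the (decreasing) cohomological LHS filtration, $E_\infty^{q,0}$ is the bottom (smallest) piece and is a \emph{subgroup} of $H^q$, while $E_\infty^{0,q}=H^q/F^1H^q$ is the \emph{top quotient}, reached by the restriction (edge) map $H^q(\Gamma_{g,1}(2))\twoheadrightarrow H^q(Q_g(2))^{\SpZ{g}}$. So the $c_i$, living in $E_\infty^{\ast,0}$, are genuine subalgebra elements via inflation; the classes $\Gamma$ are \emph{not} automatically elements of $H^q(\Gamma_{g,1}(2))$ and must be lifted along the surjective restriction map, which you can do precisely because the spectral sequence collapses and hence the edge map is onto. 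With that fix your Hilbert-series argument for injectivity goes through unchanged, and the rest of the proposal is sound.
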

Recall that the stable cohomology ring \(H^{\ast}(\Gamma_{\infty};\mathbb{Q})\) of the \(\Gamma_{g,1}\) is a polynomial algebra on the Miller-Morita-Mumford classes \(e_i\in H^{2i}(\Gamma_{\infty};\mathbb{Q})\). The classes \(c_i\) can be chosen so that they pull back to \(e_i\) along the projection \(\Gamma_{g,1}\to\SpZ{g}\). Also, Kawazumi and Morita \cite{kamo96} show that each \(\Gamma\in\graph\) (viewed as a stable cohomology class on \(\Gamma_{g,1}(2)\)) pulls back to \(\pm e_{\vert\Gamma\vert}\). Hence we see that the projections \(\Gamma_{g,1}\twoheadrightarrow\Gamma_{g,1}(2)\twoheadrightarrow\SpZ{g}\) induce these maps on stable cohomology:
\begin{align*}
\mathbb{Q}[c_1,c_3,c_5,\ldots] \hookrightarrow \mathbb{Q}[\graph\cup\{c_1,c_3,c_5,\ldots\}] & \twoheadrightarrow \mathbb{Q}[e_1,e_2,e_3,\ldots]\\
c_i & \mapsto e_i\\
\Gamma & \mapsto \pm e_{\vert\Gamma\vert}
\end{align*}

\section{Infinite loop-space structure}\label{section:infinite loop space}
There are `pair-of-pants products' \(\Gamma_{g,1}\times\Gamma_{h,1}\to\Gamma_{g+h,1}\) which in a sense generalise the connecting homomorphisms of Definition~\ref{connecting_homs}. They are constructed as follows: gluing \(\surf_{g,1}\) and \(\surf_{h,1}\) to two connected components of the pair of pants \(\surf_{0,3}\) yields a copy of \(\surf_{g+h,1}\). The product takes a mapping class on \(\surf_{g,1}\) and one on \(\surf_{h,1}\) and extends them by the identity on \(\surf_{0,3}\). These products make \(\coprod_{g\geqslant0}B\Gamma_{g,1}\) into a monoid, and we have
\[\Omega B\left(\coprod_{g\geqslant0}B\Gamma_{g,1}\right)\simeq\mathbb{Z}\times B\Gamma_{\infty}^+\]
by the group-completion theorem. Here \(+\) is the Quillen plus-construction and \(\Gamma_{\infty}\coloneqq\colim_g\Gamma_{g,1}\) is the stable mapping class group. Tillmann \cite{til00} proved that this is in fact an infinite loop-space.

We can likewise put a monoid structure on the space \(X_k\coloneqq:\coprod_{g\geqslant0}B\Gamma_{g,1}(k)\), and the obvious maps \(X_k\to X_{k'}\) for \(1\leqslant k'\leqslant k\leqslant\infty\) are monoid homomorphisms. In this section we prove the following result (where \(\Gamma_{g,1}(\infty)\coloneqq\Gamma_{g,1}\)).
\begin{lettered_thm}
For every \(k\geqslant1\), \(\Omega BX_k\simeq\mathbb{Z}\times B\Gamma_{\infty}(k)^+\) is an infinite loop-space. For every \(1\leqslant k'\leqslant k\leqslant\infty\), the quotient map \(X_k\to X_{k'}\) group-completes to a map of infinite loop-spaces.
\end{lettered_thm}

\subsection{The surface operad}
There is a coherence issue which we swept under the rug in the above definition of monoid structure on \(X_k\). If we pick the identifications \(\surf_{g,1}\cup_{\partial}\surf_{0,3}\cup_{\partial}\surf_{h,1}\cong\surf_{g+h,1}\) independently of one another, there is no reason why the resulting product structure on \(X_k\) should be associative. It is, however, possible to replace \(X_k\) with an equivalent space which is a strict monoid, and moreover an algebra over the surface operad \(\oper{M}\). By a theorem of Tillmann \cite{til00}, this monoid then group-completes to an infinite loop-space.

We briefly recall the construction of the operad \(\oper{M}\) from \cite{til00} as it will be useful in the proof of Theorem~\ref{thm:inf loops}. From now on, every surface \(\surf_{g,n+1}\) comes with a marked boundary component. We fix three `atomic' surfaces: a disc \(D\coloneqq\surf_{0,1}\), a pair of pants \(P\coloneqq\surf_{0,3}\) and a torus with two boundary components \(T\coloneqq\surf_{1,2}\).

Let \(\cat{E}_{g,n}\) be the groupoid with objects \((\surf,\sigma)\) where \(\surf\cong\surf_{g,n+1}\) is a surface which is built from the atomic surfaces by identifying the marked boundary on one surface with an unmarked boundary on another surface using the fixed parametrisation, and \(\sigma\) is an ordering of the free boundary components of \(\surf\). The set of morphisms \((\surf,\sigma)\to(\surf',\sigma')\) in \(\cat{E}_{g,n}\) is \(\pi_0\Diff(S,S';\partial)\), where \(\Diff(S,S';\partial)\) is the group of diffeomorphisms \(S\to S'\) which are compatible with the fixed collars, which take the marked boundary to the marked boundary and which preserve the ordering of the free boundary components. (Note that such diffeomorphisms must necessarily preserve orientation.) We will suppress \(\sigma\) from the notation.

Gluing of surfaces yields functors
\[\gamma:\cat{E}_{g,n}\times\cat{E}_{h_1,i_1}\times\cdots\times\cat{E}_{h_n,i_n}\to\cat{E}_{g+h,i}\]
where \(h=\sum_jh_j\) and \(i=\sum_ji_j\). These functors almost define the structure of an operad on the sequence of spaces \(\{\coprod_gB\cat{E}_{g,n}\}_{n\geqslant0}\) --- they are associative and equivariant with respect to the appropriate actions of symmetric groups, but there is no unit in \(\coprod_gB\cat{E}_{g,1}\). This was remedied by Tillmann \cite{til00} (with a minor mistake corrected by Wahl \cite{wah04}):
\begin{thm}
There are full subgroupoids \(\cat{S}_{g,n}\hookrightarrow\cat{E}_{g,n}\), retractions \(R:\cat{E}_{g,n}\to\cat{S}_{g,n}\) and functors \(\overline{\gamma}\) which make the diagrams
\[
\begin{tikzcd}
\cat{E}_{g,n}\times\cat{E}_{h_1,i_1}\times\cdots\times\cat{E}_{h_n,i_n} \ar[r, "\gamma"] \ar[d, "R"'] & \cat{E}_{g+h,i} \ar[d, "R"]\\
\cat{S}_{g,n}\times\cat{S}_{h_1,i_1}\times\cdots\times\cat{S}_{h_n,i_n} \ar[r, "\overline{\gamma}"] & \cat{S}_{g+h,i}
\end{tikzcd}
\]
commute and which make the sequence \(\{\coprod_gB\cat{S}_{g,n}\}_{n\geqslant0}\) of spaces into an operad. Moreover, this data can be chosen in such a way that the pair-of-pants product \(\overline{\gamma}(P;-,-)\) is associative in this operad.
\end{thm}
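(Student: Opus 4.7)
The plan is to apply Tillmann's theorem to a strictified model of \(X_k\), using the same surface operad \(\oper{M}\) throughout. Rather than constructing new operads at each level of the Johnson tower, I would argue that the action of \(\oper{M}\) on Tillmann's strictification of \(X_\infty\) descends along the Johnson quotient maps to an action on strictifications of each \(X_k\). The resulting quotient maps between \(\oper{M}\)-algebras are tautologically \(\oper{M}\)-equivariant, so Tillmann's theorem delivers both the infinite loop structures and the comparison maps in a single stroke.

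Concretely, I would form \(\cat{S}_{g,1}(k)\) by quotienting the morphism sets of Tillmann's groupoid \(\cat{S}_{g,1}\) by the action of the normal (indeed characteristic) subgroup \(\tor_{g,1}(k)\leqslant\Gamma_{g,1}\), and set \(Y_k\coloneqq\coprod_g B\cat{S}_{g,1}(k)\); this inherits a strict monoid structure from Tillmann's \(Y_\infty \coloneqq \coprod_g B\cat{S}_{g,1}\) and models \(X_k\) up to equivalence. The key technical step is then to check that the \(\oper{M}\)-action on \(Y_\infty\), built from the composition functors \(\overline{\gamma}\), descends to \(Y_k\). After extending the definition of \(\tor_{g,n+1}(k)\) to surfaces with multiple boundary components by the same recipe --- the kernel of the \(\Gamma_{g,n+1}\)-action on \(\pi_1(\surf_{g,n+1},\ast)/\gamma_{k+1}(\pi_1(\surf_{g,n+1},\ast))\) --- this amounts to the operadic generalisation of Lemma~\ref{conn_homs_preserve_johnson}: if \(\psi_j\) is multiplied by an element \(\tau_j\in\tor_{h_j,i_j}(k)\), then \(\overline{\gamma}(\phi;\psi_1,\ldots,\psi_n)\) is multiplied by an element of \(\tor_{g+h,i}(k)\), and symmetrically for the first argument. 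Geometrically this should be immediate: \(\tau_j\) extended by the identity across all glued boundaries still acts trivially on the quotient of the fundamental group by the \((k{+}1)\)st term of the lower central series, because it acts trivially on the generators coming from its original subsurface (by hypothesis) and as the identity on the rest.

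With the \(\oper{M}\)-action on \(Y_k\) in hand, Tillmann's theorem applies without modification: the pair-of-pants product on \(Y_k\) is inherited from \(Y_\infty\) and is therefore still strictly associative, which is the only extra input Tillmann's argument requires. The conclusion is that \(\Omega BY_k\simeq\mathbb{Z}\times B\Gamma_\infty(k)^+\) is an infinite loop space for every \(k\geqslant 1\). For \(1\leqslant k'\leqslant k\leqslant\infty\), the quotient map \(Y_k\to Y_{k'}\) is \(\oper{M}\)-equivariant by construction (both sides carry the action descended from \(Y_\infty\)), so it group-completes to a map of infinite loop spaces, and these assemble into the diagram of the statement.

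The hardest part is the operadic analogue of Lemma~\ref{conn_homs_preserve_johnson} used in the second paragraph. This reduces to a van Kampen argument: the fundamental group of the glued surface is an iterated amalgamated product of the pieces' fundamental groups along the glued boundary loops, and the extension-by-identity operation preserves this decomposition in a way that respects the lower central series. One must also handle basepoint subtleties carefully, since \(\tor_{g,n+1}(k)\) depends a priori on the chosen basepoint on the boundary; Tillmann's parametrised-collar conventions should provide a coherent choice across the surfaces appearing as the objects of the various \(\cat{S}_{g,n+1}\), making the whole construction natural.
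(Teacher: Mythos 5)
Your proposal does not prove the statement at hand; it proves a different (downstream) result while assuming the statement as an input. The theorem you were asked to establish is the construction of the surface operad itself: the existence of full subgroupoids \(\cat{S}_{g,n}\subset\cat{E}_{g,n}\), of coherent retractions \(R:\cat{E}_{g,n}\to\cat{S}_{g,n}\), and of structure functors \(\overline{\gamma}\) compatible with the gluing functors \(\gamma\), such that \(\{\coprod_g B\cat{S}_{g,n}\}_{n\geqslant0}\) is a genuine operad --- in particular possesses a unit in \(\coprod_g B\cat{S}_{g,1}\), which is exactly what the groupoids \(\cat{E}_{g,n}\) lack and the whole reason the strictification is needed --- and such that the pair-of-pants product \(\overline{\gamma}(P;-,-)\) is strictly associative. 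Your write-up refers to all of this as ``Tillmann's strictification'' and ``Tillmann's groupoid \(\cat{S}_{g,1}\)'', i.e.\ you take the conclusion for granted. Nothing in your argument selects the subgroupoids, constructs the retractions, verifies the commuting squares relating \(\gamma\) and \(\overline{\gamma}\), addresses the unit problem, or establishes strict associativity; that is the entire content of the theorem. (For what it is worth, the paper does not reprove this statement either: it quotes it from Tillmann, with Wahl's correction, so there is no in-paper proof to match --- but a blind proof attempt still cannot consist of citing the statement itself.)

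What you actually argue --- that the \(\oper{M}\)-action descends along the Johnson quotients and that the quotient maps are maps of \(\oper{M}\)-algebras, hence of infinite loop spaces after group completion --- is the content of the paper's two subsequent propositions and of Theorem~B, and there your strategy matches the paper's closely: a van Kampen argument showing that \(\gamma(\phi;\psi_1,\ldots,\psi_n)\) depends on the \(\psi_i\) only up to \(\sim_k\), combined with centrality of the Johnson filtration to handle the ambiguity in choosing representatives. One divergence worth flagging even there: you propose to quotient the morphisms of every \(\cat{S}_{g,n+1}\) by an extended Johnson filtration for surfaces with several boundary components, which creates basepoint and well-definedness issues you acknowledge but do not resolve; the paper sidesteps this entirely by quotienting only the arity-zero groupoid \(\cat{S}_0\) and letting the operad act through functors \(\theta:\cat{S}_n\times\cat{S}_0(k)^n\to\cat{S}_0(k)\) with the first factor left untouched. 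None of this, however, bears on the statement you were asked to prove.
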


Note that provided the functors \(\overline{\gamma}\) exist, they are uniquely specified by the commutativity condition alone.

\begin{defn}
The operad \(\{\coprod_gB\cat{S}_{g,n}\}_{n\geqslant0}\) of the above theorem is the surface operad \(\oper{M}\).
\end{defn}

The pair-of-pants product makes \(\oper{M}\)-algebras into strict monoids. The key result of \cite{til00} can now be stated as follows.

\begin{thm}\label{grp_comp_M_alg}
Group-completion is a functor from \(\cat{M}\)-algebras to infinite loop-spaces.
\end{thm}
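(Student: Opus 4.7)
The plan is to promote the operadic action of $\oper{M}$ on an algebra $A$ to an $E_{\infty}$-action on the group completion $\Omega BA$, and then invoke May's recognition principle. The starting point is that an $\oper{M}$-algebra $A$ is, via the pair-of-pants product $\overline{\gamma}(P;-,-)$, a strict monoid, so the bar construction $BA$ is well-defined as a simplicial space and $\Omega BA$ is its group completion by the group-completion theorem.

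First I would exhibit the full operadic action. For each $n \geqslant 0$, the arity-$n$ piece of $\oper{M}$, namely $\coprod_g B\cat{S}_{g,n}$, acts on $A$ via maps $\coprod_g B\cat{S}_{g,n} \times A^n \to A$, and these assemble into an honest operad algebra structure. This action descends, componentwise in genus, to an action on each path component of $A$, and thence to one on the group completion after passing through the telescope $\mathrm{hocolim}_g$ defining group-completed components.

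The central step — and the main obstacle — is showing that after restricting to a single component of $\Omega BA$ (equivalently, after stabilising in genus), the operad $\oper{M}$ becomes $E_{\infty}$. The arity-$n$ space $\coprod_g B\cat{S}_{g,n}$ is not itself $\Sigma_n$-equivariantly contractible; what saves us is homological stability. I would proceed in two stages. Stage one: the little 2-disks operad $\oper{D}_2$ embeds into $\oper{M}$ by realising $n$ disjoint disks inside a surface of genus $0$ with $n+1$ boundary components, giving a canonical $\oper{D}_2$-action on any $\oper{M}$-algebra and hence already a 2-fold delooping of $\Omega BA$. Stage two: the $\mathrm{hocolim}_g B\cat{S}_{g,n}$ carries additional operations coming from higher-genus surfaces, and Harer–Ivanov homological stability, together with the group-completion theorem, forces the arity-$n$ space after group completion to be equivalent to $E\Sigma_n$. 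Combining these one obtains that the genus-stabilised operad is $E_\infty$, so by May's recognition principle $\Omega BA$ is an infinite loop space.

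For functoriality, a morphism $f\colon A \to A'$ of $\oper{M}$-algebras commutes with every operation in $\oper{M}$ by definition, and in particular with the pair-of-pants product, so it induces a monoid map $BA \to BA'$ and hence a map $\Omega BA \to \Omega BA'$. Since $f$ intertwines the $\oper{M}$-actions, this map is compatible with the $E_\infty$-structures constructed above, so May's machine produces a map of spectra, whose underlying infinite loop map is the one induced by $f$. The assignment $A \mapsto \Omega BA$ respects identities and composition because the bar construction and the group-completion functor both do, yielding the desired functor from $\oper{M}$-algebras to infinite loop-spaces.
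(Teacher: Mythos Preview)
The paper does not prove this statement: it is quoted as ``the key result of \cite{til00}'' and no argument is given. So there is nothing in the paper to compare your proposal against; any proof belongs to Tillmann's original paper, not this one.

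That said, your sketch misrepresents the mechanism of Tillmann's argument in a way that would not go through as written. You claim that after genus-stabilisation the arity-$n$ space of $\oper{M}$ becomes equivalent to $E\Sigma_n$, i.e.\ $\Sigma_n$-equivariantly contractible, and that this is forced by Harer--Ivanov stability and the group-completion theorem. But the stabilised arity-$n$ space is (a union of copies of) $B\Gamma_{\infty,n+1}^{+}$, which is homotopy equivalent to $B\Gamma_{\infty}^{+}$ and is very far from contractible; homological stability tells you these spaces are independent of $n$, not that they are trivial. So you cannot conclude that the stabilised operad is literally an $E_{\infty}$ operad and then feed it to May's recognition principle.

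Tillmann's actual route is different: she does not group-complete the operad. Instead she exhibits a map of operads from an $E_{\infty}$ operad (built from symmetric groups, essentially Barratt--Eccles) into $\oper{M}$, and then uses a Segal-style $\Gamma$-space construction together with the group-completion theorem to show that $\Omega BA$ underlies a special $\Gamma$-space, hence is an infinite loop space. Your paragraph on functoriality is fine in spirit, but the core deduction of the $E_{\infty}$-structure needs to be replaced by the correct argument.
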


\subsection{The \(\oper{M}\)-algebras \(B\cat{S}_0(k)\)}
We construct a sequence of \(\oper{M}\)-algebras \(B\cat{S}_0(k)\) such that each \(B\cat{S}_0(k)\) is homotopy-equivalent to \(\coprod_gB\Gamma_{g,1}(k)\).

Fixing a base-point on \(S^1\) yields, via the fixed parametrisations, a choice of base-point on every boundary component of every object of \(\cat{E}_n=\coprod_g\cat{E}_{g,n}\). We define the base-point of \(\surf\in\cat{E}_n\) to be the base-point on the marked boundary, and \(\pi_1\surf\) will always be understood as the fundamental group at this base-point.

We now restrict our attention to the groupoid \(\cat{E}_0\) of surfaces with a unique boundary component. The elements of \(\Diff(\surf,\surf';\partial)\) preserve base-points for all \(\surf,\surf'\in\cat{E}_0\), so we get a well-defined set map
\[\cat{E}_0(\surf,\surf')\to\Iso(\pi_1(\surf),\pi_1(\surf')).\]
This map is in fact injective by the Dehn-Nielsen-Baer theorem. This motivates the following definition.

\begin{defn}
For each \(k\geqslant1\), we define the equivalence relation \(\sim_k\) on the hom-sets of \(\cat{E}_0\) as follows: \(\phi,\phi':\surf\to\surf'\) have \(\phi\sim_k\phi'\) if and only if \(\phi\) and \(\phi'\) induce the same isomorphism
\[\pi_1\surf/\gamma_k(\pi_1\surf)\to\pi_1\surf'/\gamma_k(\pi_1\surf').\]
It can be seen that these relations respect composition in \(\cat{E}_0\). Hence they give rise to quotient groupoids \(\cat{E}_0(k)\coloneqq\cat{E}_0/\sim_k\) and \(\cat{S}_0(k)\coloneqq\cat{S}_0/\sim_k\).
\end{defn}

Observe that for every \(k\) and \(g\), \(\cat{S}_{g,0}/\sim_k\) (and \(\cat{E}_{g,0}/\sim_k\)) is equivalent to \(\Gamma_{g,1}(k)\), so we have
\[B\cat{S}_0(k)\simeq\coprod_gB\Gamma_{g,1}(k).\]
We claim that \(B\cat{S}_0(k)\) is an \(\oper{M}\)-algebra. First we need the following proposition which relates the functors \(\gamma\) and \(\sim_k\).

\begin{prop}
Suppose that \(\phi:A\to A'\) is a morphism in \(\cat{E}_n\) and \(\psi_i\sim_k\psi'_i:B_i\to B'_i\) are pairs of \(\sim_k\)-equivalent morphisms in \(\cat{E}_0\) for \(1\leqslant i\leqslant n\). Then
\[\gamma(\phi;\psi_1,\ldots,\psi_n)\sim_k\gamma(\phi;\psi'_1,\ldots,\psi'_n).\]
\end{prop}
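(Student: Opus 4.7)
The plan is to reduce the claim to the functoriality of the lower central series via van Kampen's theorem. Write $S\coloneqq\gamma(A;B_1,\ldots,B_n)$ and $S'\coloneqq\gamma(A';B'_1,\ldots,B'_n)$, and put $f\coloneqq\gamma(\phi;\psi_1,\ldots,\psi_n)$, $f'\coloneqq\gamma(\phi;\psi'_1,\ldots,\psi'_n)$, both regarded as diffeomorphisms $S\to S'$. By construction $f|_A=f'|_A=\phi$ and $f|_{B_i}=\psi_i$, $f'|_{B_i}=\psi'_i$. Let $\ast\in\partial A$ be the chosen base-point of $S$, and for each $i$ pick a path $\tau_i\subseteq A$ from $\ast$ to the base-point $b_i$ of $B_i$; such a $\tau_i$ exists because $b_i$ sits on an unmarked boundary component of $A$ which has been identified with the marked boundary of $B_i$. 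Conjugation by $\tau_i$ yields group homomorphisms $j_i:\pi_1(B_i,b_i)\to\pi_1(S,\ast)$, and by van Kampen $\pi_1(S,\ast)$ is generated by $\pi_1(A,\ast)$ together with the images $j_i(\pi_1(B_i,b_i))$. The crucial observation is that because $\tau_i$ lies in $A$ and $f,f'$ both restrict to $\phi$ there, we have $f(\tau_i)=\phi(\tau_i)=f'(\tau_i)$, so the two maps transport the chosen base-paths identically into $S'$.

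With this setup the comparison of $f_\ast$ and $f'_\ast$ splits into two cases. On $\pi_1(A,\ast)$ both restrict to $\phi_\ast$ and hence agree on the nose. For $\beta\in\pi_1(B_i,b_i)$ a direct computation using the observation above gives
\[f_\ast j_i(\beta)\cdot\bigl(f'_\ast j_i(\beta)\bigr)^{-1}=\phi(\tau_i)\cdot(\psi_i)_\ast(\beta)(\psi'_i)_\ast(\beta)^{-1}\cdot\phi(\tau_i)^{-1}.\]
Since $\psi_i\sim_k\psi'_i$, the middle factor $(\psi_i)_\ast(\beta)(\psi'_i)_\ast(\beta)^{-1}$ lies in $\gamma_k(\pi_1(B'_i,b'_i))$. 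The path-conjugated inclusion $\pi_1(B'_i,b'_i)\to\pi_1(S',\ast)$ is a composition of group homomorphisms, and any group homomorphism sends $\gamma_k$ into $\gamma_k$; hence the whole right-hand side lies in $\gamma_k(\pi_1(S'))$.

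Therefore $f_\ast$ and $f'_\ast$ agree modulo $\gamma_k(\pi_1(S'))$ on a generating set of $\pi_1(S)$, and since two group homomorphisms into the quotient $\pi_1(S')/\gamma_k(\pi_1(S'))$ which agree on a generating set are equal, we conclude $f\sim_k f'$. I expect the only real subtlety to be the base-point bookkeeping addressed in the first paragraph — namely the fact that $f$ and $f'$ might \emph{a priori} transport a chosen base-path to different paths in $S'$, which would introduce an extra commutator factor that a naive argument would fail to absorb into $\gamma_k$. Routing each $\tau_i$ through $A$, where $f$ and $f'$ agree, sidesteps this issue cleanly.
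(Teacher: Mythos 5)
Your proof is correct. The essential ingredients are the same as the paper's---van Kampen for $\pi_1(S)$, the observation that $f$ and $f'$ both restrict to $\phi$ on $A$, and the functoriality of $\gamma_k$---but the organisation is genuinely different and arguably cleaner. The paper first reduces to the special case $n=1$, $\phi=\id_A$, $B=B'$, establishes the result there via a commutative square of group isomorphisms to which the functor $G\mapsto G/\gamma_k(G)$ is applied, then removes the assumption $B=B'$ by a conjugation trick, and finally handles general $n$ and $\phi$ by factoring $\gamma(\phi;\psi_1,\ldots,\psi_n)$ as a composite of maps each altering a single slot. That last step relies on the previously noted (but not spelled out) fact that $\sim_k$ respects composition in $\cat{E}_0$. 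You instead attack the general case head-on: you choose base-paths $\tau_i\subseteq A$, check that $f_\ast$ and $f'_\ast$ coincide on $\pi_1(A,\ast)$ exactly and differ on each $j_i(\pi_1(B_i,b_i))$ by an element of $\gamma_k(\pi_1 S')$, and conclude by comparing on a generating set. This sidesteps both the reduction to $n=1$ and the appeal to compatibility of $\sim_k$ with composition, at the cost of doing the base-point bookkeeping explicitly---which you rightly flag as the one real subtlety and handle by routing the $\tau_i$ through $A$, the precise analogue of the paper's ``can be represented by a diffeomorphism which restricts to the identity on $A$.''
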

\begin{proof}
Let us first assume that \(n=1\), \(\phi=\id_A\) and \(B_1=B'_1\). We drop the subscripts from \(B\) and \(\psi,\psi'\).

By the van Kampen theorem, \(\pi_1(\gamma(A;B))\cong\pi_1(A)\ast_{\partial}\pi\), where \(\pi\) is a subgroup of \(\pi_1(\gamma(A;B))\) isomorphic to \(\pi_1(B)\) and \(\ast_{\partial}\) denotes amalgamation over the identified boundaries of \(A\) and \(B\), and similarly \(\pi_1(\gamma(A;B'))\cong\pi_1(A)\ast_{\partial}\pi'\). The isomorphisms \(\pi\cong\pi_1 B\) and \(\pi'\cong\pi_1 B'\) are not canonical, but instead depend on a choice of paths in \(A\) from the base-point of \(A\) to the base-point on its free boundary. If we let them both be induced by the same path, we obtain the diagram
\begin{equation*}
\begin{tikzcd}
\pi_1(A)\ast_{\partial}\pi_1(B)
\ar[r,"\sim"]
\ar[d,"\id\ast_{\partial}\psi_{\ast}"'] &
\pi_1(\gamma(A;B))
\ar[d,"\gamma(A;\psi)_{\ast}"]\\
\pi_1(A)\ast_{\partial}\pi_1(B')
\ar[r,"\sim"] &
\pi_1(\gamma(A;B'))
\end{tikzcd}
\end{equation*}
which commutes since the class \(\gamma(A;\psi)\) of diffeomorphisms \(\gamma(A;B)\to\gamma(A;B')\) can be represented by a diffeomorphism which restricts to the identity on \(A\).

We have an analogous diagram where \(\psi\) is replaced with \(\psi'\). Upon applying the functor \(G\mapsto G/\gamma_k(G):\Grp\to\Grp\) to both diagrams, the left vertical arrows become equal since \(\psi\sim_k\psi'\). The horizontal arrows are also equal (they are equal even before we quotient out by \(\gamma_k\)), and so \(\gamma(A;\psi)\sim_k\gamma(A;\psi')\).

Now suppose \(B\neq B'\). If \(\psi,\psi':B\to B'\) are \(\sim_k\)-equivalent, then so are \(\id,\psi^{-1}\psi':B\to B\). Hence by the above, \(\gamma(A;\id)\sim_k\gamma(A;\psi^{-1}\psi')\). But we clearly have \(\gamma(A;\id)=\id_{\gamma(A;B)}\) and \(\gamma(A;\psi^{-1}\psi')=\gamma(A;\psi)^{-1}\gamma(A;\psi')\), so \(\gamma(A;\psi)\sim_k\gamma(A;\psi')\).

For the most general case, note that
\[\gamma(\phi;\psi_1,\ldots,\psi_n)=\gamma(\phi;B'_1,\ldots,B'_n)\prod_{i=1}^n\gamma(A;B_1,\ldots,B_{i-1},\psi_i,B'_{i+1},\ldots,B'_n)\]
and
\[\gamma(A;B_1,\ldots,B_{i-1},\psi_i,B'_{i+1},\ldots,B'_n)=\gamma(A_i;\psi_i)\]
where \(A_i\coloneqq\gamma(A;B_1,\ldots,B_{i-1},-,B'_{i+1},\ldots,B'_n)\). By the above, \(\gamma(A_i,\psi_i)\sim_k\gamma(A_i;\psi'_i)\), finishing the proof.
\end{proof}

Now we are ready to prove Theorem~\ref{thm:inf loops}; it follows easily using Theorem~\ref{grp_comp_M_alg} from the following result.

\begin{prop}
For every \(k\), the structure functors \(\overline{\gamma}:\cat{S}_n\times\cat{S}_0^n\to\cat{S}_0\) descend to functors
\[\theta:\cat{S}_n\times\cat{S}_0(k)^n\to\cat{S}_0(k)\]
along the projection \(\cat{S}_0\to\cat{S}_0(k)\). This induces an \(\oper{M}\)-algebra structure on each \(B\cat{S}_0(k)\). For every \(1\leqslant k'\leqslant k\leqslant\infty\), the quotient functor \(\cat{S}_0(k)\to\cat{S}_0(k')\) gives rise to a map of \(\oper{M}\)-algebras upon realisation.
\end{prop}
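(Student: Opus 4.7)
The key point is to descend $\overline{\gamma}$ to the quotient on each $\cat{S}_0$-factor; the operad axioms and naturality in $k$ will then drop out almost for free. My plan is therefore in three steps.

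First, I would show that the functor $\overline{\gamma}\colon\cat{S}_n\times\cat{S}_0^n\to\cat{S}_0$ respects the equivalence relation $\sim_k$ on each $\cat{S}_0$-factor. By the commutative square in the theorem recalled from Tillmann's work, $\overline{\gamma}(\phi;\psi_1,\dots,\psi_n)=R\gamma(\phi;\psi_1,\dots,\psi_n)$, so it suffices to check that both $\gamma$ and $R$ preserve $\sim_k$. For $\gamma$ this is precisely the content of the previous proposition. For $R$, note that since $\cat{S}_{g,n}$ is a full subgroupoid of $\cat{E}_{g,n}$ and $R$ is a retraction, $R$ is an equivalence of groupoids; picking a natural isomorphism $\eta\colon\iota R\Rightarrow\Id_{\cat{E}_{g,n}}$ we have $R\phi=\eta_B^{-1}\phi\eta_A$ for any $\phi\colon A\to B$ in $\cat{E}_{g,n}$. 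Since $\sim_k$ is manifestly a congruence on the hom-sets of $\cat{E}_{g,n}$ (it is defined via a homomorphism out of the mapping class groupoid), pre- and post-composition by the fixed morphisms $\eta_A,\eta_B$ preserves $\sim_k$, and so $\phi\sim_k\phi'$ implies $R\phi\sim_k R\phi'$.

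Second, with $\theta$ in hand, the operad axioms for $\{\coprod_g B\cat{S}_0(k)\}$ (associativity, symmetric-group equivariance, and unit) follow at once from the corresponding axioms for $\overline{\gamma}$ by composing with the quotient functor $\cat{S}_0\to\cat{S}_0(k)$, which is essentially surjective and full. In particular the pair-of-pants product $\theta(P;-,-)$ remains associative, making $B\cat{S}_0(k)$ into an $\oper{M}$-algebra and a strict monoid under the pair-of-pants product. Invoking Theorem~\ref{grp_comp_M_alg} (group-completion is functorial from $\oper{M}$-algebras to infinite loop spaces) immediately upgrades $\Omega BX_k\simeq\mathbb{Z}\times B\Gamma_\infty(k)^+$ to an infinite loop space.

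Third, for the naturality in $k$, I would observe that when $1\leqslant k'\leqslant k$, one has $\gamma_{k'}(\pi)\supseteq\gamma_k(\pi)$, so $\pi/\gamma_{k'}(\pi)$ is a quotient of $\pi/\gamma_k(\pi)$; hence the relation $\sim_k$ refines $\sim_{k'}$, and the identity on $\cat{S}_0$ descends to a well-defined quotient functor $\cat{S}_0(k)\to\cat{S}_0(k')$ commuting with all the $\theta$'s. Functoriality of group-completion then produces the desired maps of infinite loop spaces, and the commutative diagram of Theorem~\ref{thm:inf loops} follows by taking $k'=1$ and (for the top entry) $k=\infty$, using the identification $\cat{S}_0(\infty)=\cat{S}_0$ and $\Gamma_\infty(1)\cong\Sp(2\infty,\mathbb{Z})$ with the standard identification of $K_{\Sp}(\mathbb{Z})$ with $\mathbb{Z}\times B\Sp(2\infty,\mathbb{Z})^+$.

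The one genuinely delicate point is Step 1, namely the claim that $R$ preserves $\sim_k$; the above argument via a natural isomorphism $\eta$ works but is mildly non-canonical. If one prefers to avoid choices, an alternative is to define $\sim_k$ first on $\cat{E}_0$ only and then \emph{define} $\cat{S}_0(k)$ as the essential image of $\cat{S}_0$ in $\cat{E}_0/\sim_k$; then $R$ descends tautologically. Either way, the remaining axiomatic bookkeeping is routine.
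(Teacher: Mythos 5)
Your proposal is correct and follows essentially the same route as the paper's proof: show that $\gamma$ preserves $\sim_k$ (the preceding proposition), show that $R$ preserves $\sim_k$ via conjugation by the components of a natural isomorphism $\iota R \cong \Id$, and then conclude that $\overline{\gamma}=R\circ\gamma$ descends; the operad axioms and naturality in $k$ are then formal. The only cosmetic difference is that the paper avoids the ``mildly non-canonical'' choice you flag at the end by observing that, since $R$ is fully faithful, there is a \emph{unique} morphism $\Phi_{\surf}\colon\surf\to R\surf$ with $R\Phi_{\surf}=\id$, and $R\phi=\Phi_{\surf'}\circ\phi\circ\Phi_{\surf}^{-1}$; so the natural isomorphism you invoke is in fact canonical and your worry can be dispensed with.
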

\begin{proof}
Since the projection functor \(\cat{S}_0\to\cat{S}_0(k)\) is full (and the identity on objects), for every \(n\) there is at most one \(\theta\) making the right square in the diagram
\begin{equation}\label{extheta}
\begin{tikzcd}
\cat{E}_n\times\cat{E}_0^n \ar[r,"R"] \ar[d,"\gamma"'] & \cat{S}_n\times\cat{S}_0^n \ar[r] \ar[d,"\overline{\gamma}"'] & \cat{S}_n\times\cat{S}_0(k)^n \ar[d,dashed,"\theta"]\\
\cat{E}_0 \ar[r,"R"'] & \cat{S}_0 \ar[r] & \cat{S}_0(k)
\end{tikzcd}
\end{equation}
commute. It is easily seen that if \(\theta\) exist for all \(n\), they satisfy the axioms for an algebra over an operad, and any quotient map \(\cat{S}_0(k)\to\cat{S}_0(k')\) commutes with the \(\theta\), giving rise to an \(\oper{M}\)-algebra map \(B\cat{S}_0(k)\to B\cat{S}_0(k')\). So we only need to check existence.

\(R\), being a retraction of the inclusion of a full subgroupoid, is fully faithful, so for every \(\surf\in\cat{E}_{g,0}\), there is a unique \(\Phi_{\surf}:\surf\to R\surf\) which is mapped to the identity on \(R\surf\) by \(R\). But then every morphism \(\phi:\surf\to\surf'\) in \(\cat{E}_{g,0}\) has
\[R\phi=\Phi_{\surf'}\circ\phi\circ\Phi_{\surf}^{-1}.\]
Hence for a parallel pair of morphisms \(\phi,\phi':\surf\to\surf'\), \(\phi\sim_k\phi'\) if and only if \(R\phi\sim_kR\phi'\).

To show that \(\theta\) is well defined, let \(\psi:A\to A'\) be an arrow in \(\cat{S}_n\) and let \(\phi_i\sim_k\phi'_i:B_i\to B'_i\) be \(n\) pairs of \(\sim_k\)-equivalent morphisms in \(\cat{S}_0\). Then \(\gamma(\psi;\phi_1,\ldots,\phi_n)\sim_k\gamma(\psi;\phi'_1,\ldots,\phi'_n)\) in \(\cat{E}_0\) by the preceding proposition. Hence by the above observation about \(R\) and using the fact that on \(\cat{S}_n\times\cat{S}_0^n\), \(\overline{\gamma}=R\circ\gamma\), we see that
\[\overline{\gamma}(\psi;\phi_1,\ldots,\phi_n)\sim_k\overline{\gamma}(\psi;\phi'_1,\ldots,\phi'_n),\]
and so \(\overline{\gamma}\) descends to a well-defined functor \(\theta\).
\end{proof}

\bibliographystyle{abbrvnat}
\bibliography{article2}
\end{document}